\numberwithin{equation}{subsection}
\patchcmd{\thesubsection}{\arabic}{\Alph}{}{}
\patchcmd{\@seccntformat}{\@secnumfont}{%
  \@secnumfont\expandafter\protect\csname format#1\endcsname}{}{}
\patchcmd{\@startsection}{\@afterindenttrue}{\@afterindentfalse}{}{}
\patchcmd{\subsection}{-.5em}{.3\linespacing}{}{}
\theoremstyle{plain}
\newtheorem{theorem}{Theorem}[section]
\newtheorem{proposition}[theorem]{Proposition}
\newtheorem{lemma}[theorem]{Lemma}
\newtheorem{corollary}[theorem]{Corollary}
\theoremstyle{remark}
\newtheorem{remark}[theorem]{Remark}
\newcommand{\sr}{\ensuremath{\mathrm{s}}}
\newcommand{\tg}{\ensuremath{\mathrm{t}}}
\newcommand{\Repr}[2][]{\ensuremath{\mathbf{Rep}_{#1}(#2)}}
\newcommand{\Hom}[3][]{\ensuremath{\mathrm{Hom}_{#1} (#2, #3)}}
\newcommand{\restrict}[2]{\ensuremath{#1\vert_{#2}}}
\newcommand{\Ker}[1]{\ensuremath{\mathrm{Ker} (#1)}}
\newcommand{\Coker}[1]{\ensuremath{\mathrm{Coker} (#1)}}
\newcommand{\Img}[1]{\ensuremath{\mathrm{Im} (#1)}}
\newcommand{\Coimg}[1]{\ensuremath{\mathrm{Coimg} (#1)}}
\newcommand{\cat}[1]{\ensuremath{\mathcal{#1}}}
\newcommand{\End}[2][]{\ensuremath{\mathrm{End}_{#1} (#2)}}
\newcommand{\Limp}{\ensuremath{\Rightarrow}}
\newcommand{\id}[1]{\ensuremath{\mathbf{1}_{#1}}}
\renewcommand{\dim}[2][]{\ensuremath{\mathrm{dim}_{#1}(#2)}}
\newcommand{\rk}[2][]{\ensuremath{\mathrm{rk}_{#1}(#2)}}
\newcommand{\N}{\ensuremath{\mathbf{N}}}
\newcommand{\Z}{\ensuremath{\mathbf{Z}}}
\newcommand{\Q}{\ensuremath{\mathbf{Q}}}
\newcommand{\R}{\ensuremath{\mathbf{R}}}
\newcommand{\Liff}{\ensuremath{\Leftrightarrow}}
\newcommand{\sym}[1]{\ensuremath{\mathrm{S}_{#1}}}
\newcommand{\set}[1]{\ensuremath{\{ #1 \}}}
\newcommand{\suchthat}{\ensuremath{\, \vert \,}}
\newcommand{\supp}[1]{\ensuremath{\mathrm{Supp} (#1)}}
\newcommand{\closure}[1]{\ensuremath{\overline{#1}}}
\newcommand{\sgn}[1]{\ensuremath{\mathrm{sgn}(#1)}}
\newcommand{\C}{\ensuremath{\mathbf{C}}}
\newcommand{\tr}[1]{\ensuremath{\mathrm{Tr}(#1)}}
\newcommand{\card}[1]{\ensuremath{\mathrm{card} (#1)}}
\newcommand{\struct}[1]{\ensuremath{\mathcal{O}_{#1}}}
\newcommand{\sstalk}[2]{\ensuremath{\struct{#1,#2}}}
\newcommand{\HOM}[3][]{%
  \ensuremath{\mathcal{H}\mathit{om}_{#1}(#2, #3)}}
\newcommand{\pullback}[2]{\ensuremath{#1^*(#2)}}
\newcommand{\pair}[2]{\ensuremath{\langle #1, #2 \rangle}}
\newcommand{\pr}[1]{\ensuremath{\mathrm{pr}_{#1}}}
\newcommand{\Lie}[1]{\ensuremath{\mathrm{Lie}(#1)}}
\newcommand{\htang}[2][]{\ensuremath{\mathrm{T}_{#1}(#2)}}
\newcommand{\norm}[1]{\ensuremath{\lVert#1\rVert}}
\newcommand{\Aut}[2][]{\ensuremath{\mathrm{Aut}_{#1} (#2)}}
\newcommand{\units}[1]{\ensuremath{#1^\times}}
\renewcommand{\bar}[1]{\ensuremath{\overline{#1}}}
\newcommand{\abs}[1]{\ensuremath{\lvert#1\rvert}}
\newcommand{\tang}[2][]{\ensuremath{\mathrm{T}_{#1}(#2)}}
\newcommand{\vect}[1]{\ensuremath{\mathrm{V}(#1)}}
\newcommand{\Ann}[1]{\ensuremath{\mathrm{Ann}(#1)}}
\DeclareMathOperator{\diff}{d}
\DeclareMathOperator{\Ad}{Ad}
\newcommand{\schur}[1]{\ensuremath{#1_{\mathrm{schur}}}}
\newcommand{\stab}[1]{\ensuremath{#1_{\mathrm{s}}}}
\newcommand{\eh}[1]{\ensuremath{#1_{\mathrm{eh}}}}
\newcommand{\irr}[1]{\ensuremath{#1_{\mathrm{irr}}}}
\newcommand{\zm}[1]{\ensuremath{#1_{\mathrm{m}}}}
\newcommand{\zms}[1]{\ensuremath{#1_{\mathrm{ms}}}}
\newcommand{\UU}[1]{\ensuremath{\mathrm{U}(#1)}}
\begin{document}

\title[Moduli of quiver representations]{Holomorphic aspects of moduli
  of representations of quivers}

\author{Pradeep Das}

\author{S.~Manikandan}

\author{N.~Raghavendra}

\email{pradeepdas@hri.res.in, smanikandan@hri.res.in,
  raghu@hri.res.in}

\address{Harish-Chandra Research Institute \\ HBNI \\ Chhatnag Road \\ Jhunsi \\
  Allahabad 211~019 \\ India}

\keywords{Quiver representations, Moduli space, K\"ahler metric,
 Hermitian line bundle}

\subjclass[2010]{14D20, 16G20, 32Q15, 53D20}

\begin{abstract}
  This article describes some complex-analytic aspects of the moduli
  space of the finite-dimensional complex representations of a finite
  quiver, which are stable with respect to a fixed rational weight.
  We construct a natural structure of a complex manifold on this
  moduli space, and a K\"ahler metric on the complex manifold.  We
  then define a Hermitian holomorphic line bundle on the moduli space,
  and show that its curvature is a rational multiple of the K\"ahler
  form.
\end{abstract}

\maketitle

\section*{Introduction}
\label{sec:introduction}

Moduli spaces of representations of quivers are of interest because of
their relations with the moduli spaces of representations of algebras
\cite{KMR}, and with the moduli spaces of sheaves on projective
schemes \cite{ACK}.  A general survey about the moduli spaces of
representations of quivers is \cite{RMR}.

In this paper, we discuss some complex-analytic aspects of the moduli
space of the finite-dimensional complex representations of a finite
quiver, which are stable with respect to a fixed rational weight.  We
describe a natural K\"ahler metric on this moduli space, and exhibit a
Hermitian holomorphic line bundle on it, whose Chern form is
essentially an integral multiple of the K\"ahler form of this metric.
This integral multiple depends only on the chosen weight.

The methods of this paper are elementary in nature.  They are based on
K\"ahler geometry, and do not use any results from geometric invariant
theory.  In particular, when the moduli spaces of stable
representations are compact, by the Kodaira embedding theorem, the
results of this paper give an analytic proof of the projectivity of
these moduli spaces.

We view the stability of representations of quivers as a special case
of Rudakov's theory of stability structures on an abelian category.
Accordingly, we begin by recalling this theory in
Section~\ref{sec:stability-structures}.  Any finite positive family of
additive functions on an abelian category, and a corresponding family
of real numbers, called a \emph{weight}, define an stability structure
on the category.  There is a natural hyperplane arrangement on the
space of weights, and the stability condition remains constant within
every facet of this hyperplane arrangement.  We describe this idea in
Section~\ref{sec:stab-with-resp}.  It includes, as a special case, the
stability of representations of a finite quiver with respect to a
given weight.

We recall some basic notions about quivers and their representations
in Section~\ref{sec:repr-quiv}.  We also describe a theorem of King,
which relates stability of a representation of a quiver to the
existence of a certain kind of inner product on the representation,
which we call an \emph{Einstein-Hermitian metric}, because of its
similarity to Einstein-Hermitian metrics on vector bundles.  We
discuss families of representations in Section~\ref{sec:famil-repr},
and explain a criterion for two representations in a family to be
separated from each other.

We construct the moduli space of Schur representations in
Section~\ref{sec:moduli-space-schur}.  It is, in general, a
non-Hausdorff complex manifold.  Its open subset of stable
representations is Hausdorff, and has a natural K\"ahler metric, as we
explain in Section~\ref{sec:kahler-metric-moduli}.  We end the paper
with a description of a natural Hermitian holomorphic line bundle on
the moduli space of representations that are stable with respect to a
rational weight, and show that its curvature is essentially an
integral multiple of the K\"ahler form on the moduli space.

\section{Stability structures}
\label{sec:stability-structures}

The stability of representations of a quiver is a special case of the
notion of a stability structure that was defined by Rudakov
\cite[Definition 1.1]{RSA}.  In this section, we recall some
properties of such stability structures.  We first look at the
definition of stability structures.  Then, we recall the Schur Lemma
about the endomorphisms of stable objects.  Lastly, we mention
Jordan-H\"older and Harder Narasimhan filtrations, and the notion of
$S$-equivalence of semistable objects.

\subsection{Semistable objects of an abelian category}
\label{sec:semist-objects-an}

Let $\cat{A}$ be an abelian category, and $\preceq$ a total preorder
on the set of non-zero objects of $\cat{A}$.  For any two non-zero
objects $M$ and $N$ of $\cat{A}$, write $M \succeq N$ if
$N \preceq M$, and define
\begin{align*}
  M \prec N
  &\quad\text{if}\quad M \preceq N ~\text{and}~ M \not\succeq N, \\
  M \asymp N
  &\quad\text{if}\quad M \preceq N ~\text{and}~ M \succeq N, \\
  M \succ N
  &\quad\text{if}\quad M \succeq N ~\text{and}~ M \not\preceq N.
\end{align*}
Then, $\succeq$ is a total preorder, $\prec$ and $\succ$ irreflexive
transitive relations, and $\asymp$ an equivalence relation, on the set
of non-zero objects of $\cat{A}$.  Moreover, for any two non-zero
objects $M$ and $N$ of $\cat{A}$, exactly one of the three statements
\begin{equation*}
  \label{eq:1}
  M \prec N, \quad
  M \asymp N, \quad
  M \succ N
\end{equation*}
holds.  We say that $\preceq$ has the \emph{seesaw property} if for
every short exact sequence
\begin{equation*}
  \label{eq:2}
  0 \to M' \xrightarrow{f'} M \xrightarrow{f} M'' \to 0
\end{equation*}
of non-zero objects of $\cat{A}$, exactly one of the three statements
\begin{equation*}
  \label{eq:3}
  M' \prec M \prec M'', \quad
  M' \asymp M \asymp M'', \quad
  M' \succ M \succ M''
\end{equation*}
is true.  A \emph{stability structure} on $\cat{A}$ is a total
preorder on the set of non-zero objects of $\cat{A}$, which has the
seesaw property.

We fix a stability structure $\preceq$ on an abelian category
$\cat{A}$.  The seesaw property implies that if $M$ and $M'$ are two
isomorphic non-zero objects of $\cat{A}$, then $M\asymp M'$.  It
follows that if $M\cong M'$ and $N\cong N'$ are isomorphisms of
objects in $\cat{A}$, then $M\preceq N$ (respectively, $M\prec N$,
$M\asymp N$) if and only if $M'\preceq N'$ (respectively,
$M'\prec N'$, $M'\asymp N'$).  In particular, if $i: N \to M$ and
$i' : N' \to M$ are two equivalent non-zero subobjects of $M$, we have
$N \preceq M$ (respectively, $N \prec M$, $N \asymp M$) if and only if
$N' \preceq M$ (respectively, $N' \prec M$, $N' \asymp M$).

An object $M$ of $\cat{A}$ is called \emph{semistable} (respectively,
\emph{stable}) if
\begin{equation*}
  \label{eq:7}
  N \preceq M
  \quad
  (\text{respectively,}~ N \prec M)
\end{equation*}
for every non-zero proper subobject $N$ of $M$.  We say that an object
of $\cat{A}$ is \emph{polystable} if it is semistable, and is
isomorphic to the direct sum of a finite family of stable objects of
$\cat{A}$.  It is obvious that stable $\Limp$ polystable $\Limp$
semistable, and that all three properties are preserved by
isomorphisms in $\cat{A}$.  It is also easy to verify the following
statements about semistable objects.

\begin{proposition}
  \label{pro:1}
  Let $\preceq$ be a stability structure on an abelian category
  $\cat{A}$.  Let $S$ be an $\asymp$-equivalence class in the set of
  non-zero objects of $\cat{A}$, and let $\cat{A}(S)$ be the full
  subcategory of $\cat{A}$, whose objects are either zero objects of
  $\cat{A}$, or semistable objects of $\cat{A}$ which belong to $S$.
  Then:
  \begin{enumerate}
  \item \label{item:1} A non-zero object $M$ of $\cat{A}$ is
    semistable (respectively, stable) if and only if for every
    non-zero epimorphism $f : M \to N$ which is not an isomorphism, we
    have
    \begin{equation*}
      \label{eq:8}
      M \preceq N \quad
      (\text{respectively,}~ M \prec N).
    \end{equation*}
  \item \label{item:2} Let
    \begin{equation*}
      \label{eq:9}
      0 \to M' \xrightarrow{f'} M \xrightarrow{f} M'' \to 0
    \end{equation*}
    be a short exact sequence of non-zero objects of $\cat{A}$.
    Suppose that
    \begin{equation*}
      \label{eq:10} M' \asymp M \asymp M''.
    \end{equation*}
    Then, $M$ is semistable if and only if both $M'$ and $M''$ are
    semistable.
  \item \label{item:3} Let $M$ and $N$ be two non-zero objects of
    $\cat{A}$.  Then, the object $M \oplus N$ is semistable if and
    only if both $M$ and $N$ are semistable, and $M \asymp N$.  In
    that case,
    \begin{equation*}
      \label{eq:11}
      (M \oplus N) \asymp M \asymp N.
    \end{equation*}
  \item \label{item:4} Let $M$ and $N$ be two semistable objects of
    $\cat{A}$, and let $f: M \to N$ be a morphism.  Suppose that
    $M \asymp N$.  Then, each of the objects $\Ker{f}$, $\Img{f}$,
    $\Coimg{f}$, and $\Coker{f}$, is either zero, or is semistable and
    $\asymp$-related to $M$.
  \item \label{item:5} The category $\cat{A}(S)$ is an abelian
    subcategory of $\cat{A}$.
  \end{enumerate}
\end{proposition}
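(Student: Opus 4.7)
The plan is to derive the five parts in sequence, using the seesaw property as the single underlying tool, with (\ref{item:1}) serving as a duality that lets one replace subobject arguments by quotient arguments whenever convenient. The main obstacle will be the converse of (\ref{item:2}); everything else will then fall out by combining (\ref{item:1}) and (\ref{item:2}).

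First, I would prove (\ref{item:1}) by applying the seesaw property to the short exact sequence $0 \to \Ker{f} \to M \to N \to 0$ associated with a non-zero epimorphism $f: M \to N$ that is not an isomorphism: the three mutually exclusive seesaw cases show that $\Ker{f} \preceq M$ if and only if $M \preceq N$, and similarly for $\prec$. This makes the subobject and quotient formulations of (semi)stability tautologically equivalent.

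For (\ref{item:2}), the forward implication is immediate: non-zero proper subobjects of $M'$ inject into $M$, non-zero proper quotients of $M''$ factor through non-zero proper quotients of $M$ via (\ref{item:1}), and in both cases $M' \asymp M \asymp M''$ passes the inequality through. The converse is the delicate step. Given a non-zero proper subobject $N \hookrightarrow M$, I would look at the composite $g: N \to M''$ and the short exact sequence $0 \to \Ker{g} \to N \to \Img{g} \to 0$, where $\Ker{g} = N \cap M'$ is a subobject of $M'$ and $\Img{g}$ is a subobject of $M''$. A short case analysis on whether $\Ker{g}$ or $\Img{g}$ vanishes, combined with the semistability of $M'$ and $M''$, gives $\Ker{g} \preceq M$ and $\Img{g} \preceq M$ (with vacuous contribution when one of them is zero). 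Seesaw applied to the above sequence then yields $N \preceq M$ in each of its three alternatives, which is all that is needed.

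Items (\ref{item:3}) and (\ref{item:4}) follow swiftly. For (\ref{item:3}): the split short exact sequence $0 \to M \to M \oplus N \to N \to 0$, together with (\ref{item:1}) applied to the projections, shows that semistability of $M \oplus N$ forces $M \asymp N \asymp M \oplus N$, after which (\ref{item:2}) finishes both directions. For (\ref{item:4}): factoring $f$ as $M \twoheadrightarrow \Coimg{f} \cong \Img{f} \hookrightarrow N$ and applying (\ref{item:1}) along with $M \asymp N$ pins every object in the factorization into the same $\asymp$-class as $M$; seesaw on the two obvious short exact sequences $0 \to \Ker{f} \to M \to \Coimg{f} \to 0$ and $0 \to \Img{f} \to N \to \Coker{f} \to 0$ then places $\Ker{f}$ and $\Coker{f}$ in that class as well, and (\ref{item:2}) supplies semistability. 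Finally, (\ref{item:5}) is purely formal: $\cat{A}(S)$ contains the zero object by definition, is closed under biproducts by (\ref{item:3}), and is closed under kernels and cokernels of its own morphisms by (\ref{item:4}), which together say exactly that $\cat{A}(S)$ is an abelian subcategory of $\cat{A}$.
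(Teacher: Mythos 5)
Your proof is correct. The paper itself gives no argument for this proposition (it is stated as "easy to verify," with the underlying facts drawn from Rudakov), and your write-up supplies exactly the standard verification: the key steps — the seesaw trichotomy on $0 \to \Ker{f} \to M \to N \to 0$ for the subobject/quotient duality in (\ref{item:1}), the three-way case analysis on $\Ker{g}$ and $\Img{g}$ for the converse of (\ref{item:2}), and the closure-under-kernels-cokernels-and-biproducts criterion for (\ref{item:5}) — are all sound and complete.
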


\subsection{The Schur Lemma}
\label{sec:schur-lemma}

Recall that an object $M$ of an additive category $\cat{A}$ is called
\emph{simple} if it is non-zero, and has no non-zero proper subobject.
We say that $M$ is a \emph{Schur} object, or a \emph{brick}, if the
ring $\End{M}$ is a division ring.  It is easy to see that every
simple object of $\cat{A}$ is Schur.  The following Proposition
follows directly from \cite[Theorem 1]{RSA}.

\begin{proposition}
  \label{pro:2}
  Let $\cat{A}$, $\preceq$, $S$, and $\cat{A}(S)$, be as in
  Proposition~\ref{pro:1}.
  \begin{enumerate}
  \item \label{item:18} Let $M$ and $N$ be two semistable objects of
    $\cat{A}$, and let $f: M \to N$ be a non-zero morphism.  Suppose
    that $M \succeq N$.  Then:
    \begin{enumerate}
    \item \label{item:19} $M \asymp N$.
    \item \label{item:20} If $M$ is stable, then $f$ is a
      monomorphism.
    \item \label{item:21} If $N$ is stable, then $f$ is an
      epimorphism.
    \item \label{item:22} If both $M$ and $N$ are stable, then $f$ is
      an isomorphism.
    \end{enumerate}
  \item \label{item:23} An element $M$ of $S$ is a simple object of
    the abelian category $\cat{A}(S)$ if and only if it is stable.
  \item \label{item:24} (Schur Lemma) Every stable object of $\cat{A}$
    is a Schur object of $\cat{A}$.
  \item \label{item:25} Suppose that $\cat{A}$ is a $K$-linear abelian
    category, where $K$ is an algebraically closed field, and let $M$
    be an object of $\cat{A}$.  Suppose also that the $K$-vector space
    $\End{M}$ is finite-dimensional.  Then, $M$ is a Schur object of
    $\cat{A}$ if and only if for every endomorphism $f$ of $M$ in
    $\cat{A}$, there exists a unique element $\lambda\in K$, such that
    $f = \lambda \id{M}$.
  \end{enumerate}
\end{proposition}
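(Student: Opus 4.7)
\emph{Plan.} The four parts of the proposition all hinge on the canonical factorization of a non-zero morphism $f: M \to N$ through its image $I = \Img{f}$, combined with the seesaw property and the quotient-based semistability criterion of Proposition~\ref{pro:1}\eqref{item:1}.

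For (1)(a), $f \ne 0$ makes $I$ a non-zero object, so the induced epimorphism $M \to I$ together with Proposition~\ref{pro:1}\eqref{item:1} yields $M \preceq I$, while semistability of $N$ applied to the subobject $I$ of $N$ yields $I \preceq N$. Combined with the hypothesis $M \succeq N$, these force $M \asymp I \asymp N$. For (b), if $M$ is stable and $\Ker{f} \ne 0$, then $\Ker{f}$ is a non-zero proper subobject with $\Ker{f} \prec M$; the seesaw property applied to $0 \to \Ker{f} \to M \to \Coimg{f} \to 0$ then forces $M \prec \Coimg{f}$, contradicting $\Coimg{f} \cong I \asymp M$ from (a). Part (c) is dual: if $N$ is stable and $\Img{f}$ is a proper non-zero subobject of $N$, then $\Img{f} \prec N$, again contradicting (a). Part (d) is then immediate, since a monic epimorphism in an abelian category is an isomorphism.

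For (2), if $M$ is stable and $N$ is a non-zero proper subobject of $M$ lying in $\cat{A}(S)$, then $N \in S$ gives $N \asymp M$, contradicting $N \prec M$, so $M$ is simple in $\cat{A}(S)$. Conversely, suppose $M$ is simple in $\cat{A}(S)$ but not stable in $\cat{A}$; then some non-zero proper subobject $N$ of $M$ satisfies $N \asymp M$ (semistability rules out $N \succ M$), and Proposition~\ref{pro:1}\eqref{item:2} makes $N$ semistable, placing it in $\cat{A}(S)$ and contradicting simplicity. Part (3), the Schur Lemma, is (1)(d) specialized to $N = M$: every non-zero endomorphism of a stable object is an isomorphism, so $\End{M}$ is a division ring.

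For (4), the ``if'' direction is immediate, since $K \cdot \id{M}$ is isomorphic to $K$ and thus a division ring. For ``only if'', fix $f \in \End{M}$ and consider the subalgebra $K[f] \subseteq \End{M}$; it is commutative and finite-dimensional over $K$, and sits inside a division ring, so it is an integral domain and hence a field. Being a finite extension of the algebraically closed field $K$, it must coincide with $K$, whence $f = \lambda \id{M}$ for a unique $\lambda \in K$ (uniqueness because $M \ne 0$ forces $\id{M} \ne 0$). The only step that really requires attention is the factorization argument in (1)(a), where one must handle uniformly the various cases depending on whether $M \to I$ or $I \to N$ is an isomorphism; the quotient-based semistability criterion in Proposition~\ref{pro:1}\eqref{item:1} dispatches this in one line, and everything else is formal.
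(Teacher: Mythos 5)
Your proof is correct. The paper itself offers no argument for this proposition: it simply records that the statement ``follows directly from [Rudakov, Theorem 1]'', so your write-up supplies in full what the paper delegates to the reference. The route you take --- factoring a non-zero $f:M\to N$ through $I=\Img{f}$, squeezing $M\preceq I\preceq N$ against the hypothesis $M\succeq N$ via the quotient criterion of Proposition~\ref{pro:1}(\ref{item:1}) and semistability of $N$, and then using the seesaw property on $0\to\Ker{f}\to M\to\Coimg{f}\to 0$ to rule out a non-zero kernel when $M$ is stable --- is exactly the standard argument behind Rudakov's theorem, and parts (\ref{item:23})--(\ref{item:25}) follow as you say. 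Two small points worth making explicit if this were written out: in part (\ref{item:23}) you should note that a subobject of $M$ in $\cat{A}(S)$ is also a subobject in $\cat{A}$ (this is where Proposition~\ref{pro:1}(\ref{item:5}), the fact that $\cat{A}(S)$ is a full abelian subcategory with kernels computed in $\cat{A}$, is used), and in part (\ref{item:25}) the uniqueness of $\lambda$ in the ``if'' direction is what forces $\id{M}\neq 0$, so that $K\cdot\id{M}\cong K$; both are one-line observations and do not affect the validity of the argument.
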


\subsection{Jordan-H\"older filtrations}
\label{sec:jord-hold-filtr}

A sequence $(M_n)_{n\in\N}$ of subobjects of an object $M$ of
$\cat{A}$ is called \emph{stationary} if there exists $n_0\in\N$, such
that $M_n = M_{n+1}$ for all $n\geq n_0$.  We say that an object $M$
of $\cat{A}$ is
\begin{enumerate}
\item \label{item:30} \emph{Noetherian} (respectively,
  \emph{Artinian}) if every sequence $(M_n)_{n\in\N}$ of subobjects of
  $M$, such that $M_n\subset M_{n+1}$ (respectively,
  $M_n\supset M_{n+1}$) for all $n\in\N$, is stationary.
\item \label{item:31} \emph{quasi-Noetherian} with respect to
  $\preceq$ if every sequence $(M_n)_{n\in\N}$ of subobjects of $M$,
  such that $M_n\subset M_{n+1}$, and $M_n\preceq M_{n+1}$ for all
  $n\in\N$, is stationary.
\item \label{item:32} \emph{weakly Artinian} with respect to $\preceq$
  if every sequence $(M_n)_{n\in\N}$ of subobjects of $M$, such that
  $M_n\supset M_{n+1}$, and $M_n\preceq M_{n+1}$ for all $n\in\N$, is
  stationary.
\item \label{item:33} \emph{weakly Noetherian} with respect to
  $\preceq$ if it is quasi-Noetherian with respect to $\preceq$, and
  if every sequence $(M_n)_{n\in\N}$ of subobjects of $M$, such that
  $M_n\subset M_{n+1}$, and $M_n\succeq M_{n+1}$ for all $n\in\N$, is
  stationary.
\end{enumerate}
The category $\cat{A}$ is called \emph{Noetherian} (respectively,
\emph{Artinian}) if every object in it is Noetherian (respectively,
Artinian).  It is called \emph{quasi-Noetherian} (respectively,
\emph{weakly Artinian}, \emph{weakly Noetherian}) with respect to
$\preceq$ if every object in it is quasi-Noetherian (respectively,
weakly Artinian, weakly Noetherian) with respect to $\preceq$.

If $M$ is a semistable object of $\cat{A}$, then a
\emph{Jordan-H\"older filtration} of $M$ with respect to $\preceq$ is
a sequence $(M_i)_{i=0}^n$ of subobjects of $M$, such that $n\geq 1$,
$M_0 = M$, $M_n=0$, and $M_i \subset M_{i-1}$, $M_{i-1}/M_i$ is
stable, and $M_{i-1} \asymp M$, for every $i=1,\dotsc,n$.  If $M$ is
an arbitrary object of $\cat{A}$, then a \emph{Harder-Narasimhan
  filtration} of $M$ with respect to $\preceq$ is a sequence
$(M_i)_{i=0}^n$ of subobjects of $M$, such that $n\in\N$, $M_0 = M$,
$M_n=0$, $M_i \subset M_{i-1}$ and $G_i = M_{i-1}/M_i$ is semistable
for every $i=1,\dotsc,n$, and $G_{i-1} \prec G_i$ for every
$i=2,\dotsc,n$.

The statements in the following Proposition are proved in
\cite[Theorems 2 and 3]{RSA}.

\begin{proposition}
  \label{pro:3}
  Let $\preceq$ be a stability structure on an abelian category
  $\cat{A}$.
  \begin{enumerate}
  \item \label{item:34} Suppose that $\cat{A}$ is quasi-Noetherian,
    and weakly Artinian, with respect to $\preceq$.  Then, every
    semistable object $M$ of $\cat{A}$ has a Jordan-H\"older
    filtration with respect to $\preceq$.  Moreover, if
    $(M_i)_{i=0}^n$ and $(N_j)_{j=0}^m$ are two Jordan-H\"older
    filtrations of $M$ with respect to $\preceq$, then $n=m$, and
    there exists a permutation $\pi\in \sym{n}$ such that
    $M_{i-1}/M_i$ is isomorphic to $N_{\pi(i)-1}/N_{\pi(i)}$ for every
    $i=1,\dotsc,n$.
  \item \label{item:35} Suppose that $\cat{A}$ is weakly Noetherian,
    and weakly Artinian, with respect to $\preceq$.  Then, every
    object of $\cat{A}$ has a unique Harder-Narasimhan filtration with
    respect to $\preceq$.
  \end{enumerate}
\end{proposition}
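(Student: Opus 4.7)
The plan is to treat parts \eqref{item:34} and \eqref{item:35} by distinct routes, reducing the first to the classical Jordan--H\"older theorem inside the abelian subcategory $\cat{A}(S)$, and handling the second by iteratively extracting a maximal destabilizing subobject. For part \eqref{item:34}, I fix a semistable $M$ with $\asymp$-class $S$ and work inside the abelian subcategory $\cat{A}(S)$ of Proposition~\ref{pro:1}\eqref{item:5}. Any strict chain of subobjects inside $\cat{A}(S)$ has $\asymp$-equivalent non-zero terms, so in particular $M_n \preceq M_{n+1}$ throughout; the quasi-Noetherian hypothesis then forces strictly ascending such chains to stabilize, and the weakly Artinian hypothesis handles strictly descending ones. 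Hence $\cat{A}(S)$ is both Noetherian and Artinian in the classical sense, and a standard induction (take a maximal proper subobject $M_1 \subset M$ in $\cat{A}(S)$ so that $M/M_1$ is simple there, iterate within $M_1$, and invoke Artinianity for termination) produces a composition series of $M$. By Proposition~\ref{pro:2}\eqref{item:23}, the simple objects of $\cat{A}(S)$ are precisely the stable objects of $\cat{A}$ in $S$, yielding a Jordan--H\"older filtration of $M$ in the sense of the statement. Uniqueness up to permutation of factors is then the Schreier--Zassenhaus refinement theorem applied inside $\cat{A}(S)$.

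For part \eqref{item:35}, the central construction is that of a \emph{maximal destabilizing subobject} $D(M) \subset M$ playing the role of $M_{n-1} = G_n$: a non-zero semistable subobject such that every non-zero subobject $N \subset M$ satisfies $N \preceq D(M)$, with $N \subset D(M)$ whenever $N \asymp D(M)$. Its construction has two substeps. First, using the weakly Artinian hypothesis, I show that every non-zero subobject $N \subset M$ dominates a non-zero semistable subobject of $\preceq$-class at least that of $N$: if $N$ is not semistable, pick a non-zero proper subobject $N_1 \subset N$ with $N \prec N_1$, iterate to form a strict descending chain $N \supset N_1 \supset \dotsb$ with $N_i \preceq N_{i+1}$, and invoke weakly Artinian to force stabilization at a semistable term. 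Second, using Proposition~\ref{pro:1}\eqref{item:3}--\eqref{item:4} (which together imply that sums of semistable subobjects of $M$ in a common $\asymp$-class are semistable in that class) and the quasi-Noetherian hypothesis, I show that the set of $\preceq$-classes of non-zero semistable subobjects of $M$ has a maximum, and that the $\subset$-supremum of semistable subobjects of $M$ attaining this maximum is itself such a subobject; this is $D(M)$. Setting $M_{n-1} = D(M)$ and recursing on $M/D(M)$, the seesaw property applied inductively to the short exact sequences $0 \to M_i \to M_{i-1} \to G_i \to 0$ yields $M_{i-1} \prec M_i$ for $i = 1, \dotsc, n-1$; the reversed ascending chain $M_{n-1} \subset M_{n-2} \subset \dotsb \subset M_0 = M$ thus has strictly decreasing $\preceq$-classes, and the second clause of weakly Noetherian forces it to be finite, so the recursion terminates. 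Uniqueness follows by direct comparison: any HN filtration $(M'_j)_{j=0}^{n'}$ of $M$ has $M'_{n'-1}$ sharing the characterizing property of $D(M)$, hence $M'_{n'-1} = D(M)$, and one recurses on $M/D(M)$.

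The main obstacle is the construction of $D(M)$: specifically, converting the chain conditions granted by the hypotheses into the genuine \emph{existence of maxima}, both in the $\preceq$-class ordering and in the $\subset$-ordering within a fixed $\asymp$-class. This requires careful use of the seesaw property together with Proposition~\ref{pro:1}\eqref{item:3}--\eqref{item:4} to ensure that the relevant families of subobjects are closed under the lattice operations (sums, intersections, quotients) appearing in the argument, so that the existence of suprema and maxima can be deduced from the stationarity of individual chains.
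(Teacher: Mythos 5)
The paper offers no proof of this Proposition at all: it cites \cite[Theorems 2 and 3]{RSA}, so you are reconstructing an argument the authors outsource to Rudakov. Your part~(\ref{item:34}) is essentially correct and is the standard route: every chain of subobjects of $M$ inside $\cat{A}(S)$ has all non-zero terms $\asymp$-equivalent, so quasi-Noetherianity and weak Artinianity make $\cat{A}(S)$ Noetherian and Artinian, the classical Jordan--H\"older theorem applies there, and Proposition~\ref{pro:2}(\ref{item:23}) identifies its simple objects with the stable ones; the only detail worth making explicit is that a Jordan--H\"older filtration in the paper's sense really is a composition series in $\cat{A}(S)$ (each $M_i$ is semistable, by downward induction using Proposition~\ref{pro:1}(\ref{item:2})), so that uniqueness transfers.

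Part~(\ref{item:35}) follows the right overall strategy (maximal destabilizing subobject $D(M)$, recursion on $M/D(M)$, termination via the second clause of weak Noetherianity, uniqueness by matching top pieces), but the construction of $D(M)$ --- which you correctly identify as the main obstacle --- has a genuine gap. The parenthetical claim that Proposition~\ref{pro:1}(\ref{item:3})--(\ref{item:4}) imply that sums of semistable subobjects of $M$ in a common $\asymp$-class are semistable in that class is false: for the quiver $a\to b$ with weight $\theta=(1,0)$, take $M=(\C^2\xrightarrow{\rho}\C)$ with $\rho$ surjective and $\rho(e_1),\rho(e_2)\neq 0$, and $P_i=(\C e_i\to\C)$; each $P_i$ is stable of slope $1/2$ and $P_1\asymp P_2$, yet $P_1+P_2=M$ has slope $2/3$ and is not even semistable, since $(\Ker{\rho}\to 0)$ has slope $1$. (The point is that $P_1+P_2$ exceeds the common class exactly when $P_1\cap P_2$ falls below it, and nothing forbids this.) More importantly, even granting that closure property, it says nothing about the statement you actually need, namely that the set of $\preceq$-classes of non-zero semistable subobjects of $M$ has a maximum: given a strictly $\prec$-increasing sequence $(P_i)$ of semistable subobjects, the chain $T_i=P_1+\dotsb+P_i$ need not satisfy $T_i\preceq T_{i+1}$ (by the seesaw property $T_{i+1}$ lies between $T_i$ and $T_{i+1}/T_i$, and non-zero quotients of the semistable $P_{i+1}$ are only bounded \emph{below}), so quasi-Noetherianity cannot be applied to it. Establishing the existence of a maximal class, and of a largest subobject attaining it, is the real technical content of Rudakov's Theorem~3; the standard argument orders non-zero subobjects by $N\le N'$ iff $N\subset N'$ and $N\preceq N'$, extracts a $\le$-maximal element via quasi-Noetherianity and Zorn, and then runs an exact-sequence comparison in the style of the Harder--Narasimhan lemma for sheaves to show that a suitably chosen $\le$-maximal element dominates every subobject. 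That step is absent from your sketch, and the uniqueness argument (that the smallest non-zero term of \emph{any} Harder--Narasimhan filtration satisfies the characterizing property of $D(M)$) is likewise asserted rather than proved, though it is routine once $D(M)$ exists.
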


Let $M$ and $N$ be two semistable objects of $\cat{A}$.  Let
$(M_i)_{i=0}^n$ and $(N_j)_{j=0}^m$ be Jordan-H\"older filtrations of
$M$ and $N$, respectively, with respect to $\preceq$, which exist by
Proposition~\ref{pro:3}(\ref{item:34}).  Then, we say that $M$ is
\emph{$S$-equivalent} to $N$ with respect to $\preceq$, if $n=m$, and
there exists a permutation $\pi\in \sym{n}$, such that $M_{i-1}/M_i$
is isomorphic $N_{\pi(i)-1}/N_{\pi(i)}$ for every $i=1,\dotsc,n$.  By
the above Proposition, this is independent of the choices of the
Jordan-H\"older filtrations, and defines an equivalence relation on
the set of all semistable objects of $\cat{A}$.

\section{Stability with respect to a weight}
\label{sec:stab-with-resp}

We now consider a special kind of the stability structures defined in
Section \ref{sec:stability-structures}, namely stability structures
defined by a finite family of positive additive functions on an
abelian category, and a corresponding family of real numbers called
weights, which form a finite-dimensional real vector space, the weight
space.  Fixing the values of the additive functions defines a
hyperplane arrangement on the weight space.  We describe how
semistability and other related notions behave with respect to this
hyperplane arrangement.

\subsection{Semistablity with respect to a weight}
\label{sec:semist-with-resp}

Let $\cat{A}$ be an abelian category.  We say that a family
$(\phi_i)_{i\in I}$ of additive functions from the set of objects of
$\cat{A}$ to an ordered abelian group $G$ is \emph{positive} if
$\phi_i(M)\geq 0$ for every object $M$ of $\cat{A}$ and for every
$i\in I$, and if for every non-zero object $M$ of $\cat{A}$, there
exists an $i\in I$, such that $\phi_i(M)>0$.  In particular, we say
that an additive function $\phi$ from $\cat{A}$ to $G$ is
\emph{positive} if the singleton family $(\phi)$ is positive.  If
$\phi$ is a positive additive function from $\cat{A}$ to $G$, then an
object $M$ of $\cat{A}$ is zero if and only if $\phi(M)=0$, hence, if
$M'$ is a subobject of an object $M$, then $\phi(M') \leq \phi(M)$,
and equality holds if and only if $M'=M$.  The category $\cat{A}$ is
Noetherian and Artinian if there exists a positive additive function
from $\cat{A}$ to $\Z$.

We now fix a non-empty finite positive family $(\phi_i)_{i\in I}$ of
additive functions from $\cat{A}$ to $\Z$.  The \emph{dimension
  vector} of any object $M$ of $\cat{A}$ is the element $\phi(M)$ of
$\N^I$ that is defined by
\begin{equation*}
  \label{eq:24}
  \phi(M) = (\phi_i(M))_{i\in I}.
\end{equation*}
The \emph{rank} of $M$ is the natural number $\rk{M}$ defined by
\begin{equation*}
  \label{eq:25}
  \rk{M} = \sum_{i\in I}\phi_i(M).
\end{equation*}
Since $(\phi_i)_{i\in I}$ is a positive family of additive functions
from $\cat{A}$ to $\Z$, the function $\mathrm{rk}$ is a positive
additive function from $\cat{A}$ to $\Z$.

An element of the $\R$-vector space $\R^I$ is called a \emph{weight}
of $\cat{A}$.  We say that a weight is \emph{rational} (respectively,
\emph{integral}) if it belongs to the subset $\Q^I$ (respectively,
$\Z^I$) of $\R^I$.  We fix a weight $\theta$ of $\cat{A}$.  We define
the \emph{$\theta$-degree} of any object $M$ of $\cat{A}$ to be the
real number $\deg_\theta(M)$ given by
\begin{equation*}
  \label{eq:26}
  \deg_\theta(M) = \sum_{i\in I}\theta_i \phi_i(M).
\end{equation*}
If $M\neq 0$, we define another real number $\mu_\theta(M)$ by
\begin{equation*}
  \label{eq:27}
  \mu_\theta(M) = \frac{\deg_\theta(M)}{\rk{M}},
\end{equation*}
and call it the \emph{$\theta$-slope} of $M$.

\begin{proposition}
  \label{pro:4}
  Let $\theta$ be any weight of $\cat{A}$.  Define a relation
  $\preceq_\theta$ on the set of non-zero objects of $\cat{A}$, by
  setting $M \preceq_\theta N$ if $\mu_\theta(M) \leq \mu_\theta(N)$.
  Then, $\preceq_\theta$ is a stability structure on $\cat{A}$.
\end{proposition}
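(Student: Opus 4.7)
The plan is to verify directly the two axioms that define a stability structure: $\preceq_\theta$ must be a total preorder on the set of non-zero objects, and it must satisfy the seesaw property. The underlying observation is that $\preceq_\theta$ is the pullback of the usual total order on $\R$ under the function $\mu_\theta$, so most of the work reduces to an elementary inequality about mediants of rational numbers.

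First I would check the total preorder axioms. Reflexivity of $\preceq_\theta$ is immediate from the reflexivity of $\leq$ on $\R$; transitivity likewise follows from transitivity of $\leq$ on $\R$; and totality holds because, for any two non-zero objects $M$ and $N$ of $\cat{A}$, the real numbers $\mu_\theta(M)$ and $\mu_\theta(N)$ are always comparable. Note that $\mu_\theta$ is well-defined on non-zero objects because $\mathrm{rk}$ is a positive additive function (as $(\phi_i)_{i\in I}$ is positive), so $\rk{M} > 0$ whenever $M \neq 0$.

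Next comes the seesaw property, which is the substantive part of the proof. Given a short exact sequence $0 \to M' \to M \to M'' \to 0$ of non-zero objects of $\cat{A}$, additivity of each $\phi_i$ yields $\rk{M} = \rk{M'} + \rk{M''}$ and $\deg_\theta(M) = \deg_\theta(M') + \deg_\theta(M'')$. Hence
\begin{equation*}
  \mu_\theta(M) = \frac{\deg_\theta(M') + \deg_\theta(M'')}{\rk{M'} + \rk{M''}},
\end{equation*}
which is a weighted mean of $\mu_\theta(M')$ and $\mu_\theta(M'')$ with strictly positive weights $\rk{M'}$ and $\rk{M''}$. The main technical step, which is just the standard mediant inequality, is to show that for real numbers $a, b, c, d$ with $b, d > 0$,
\begin{equation*}
  \frac{a}{b} \leq \frac{a+c}{b+d} \leq \frac{c}{d},
\end{equation*}
whenever $a/b \leq c/d$, with equalities throughout exactly when $a/b = c/d$. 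Applying this to $a = \deg_\theta(M')$, $b = \rk{M'}$, $c = \deg_\theta(M'')$, $d = \rk{M''}$ shows that $\mu_\theta(M)$ always lies between $\mu_\theta(M')$ and $\mu_\theta(M'')$, with all three equal whenever the extremes are equal.

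Translating back into the notation of the preorder, this gives exactly the three mutually exclusive alternatives required by the seesaw property: $M' \prec M \prec M''$ if $\mu_\theta(M') < \mu_\theta(M'')$, $M' \asymp M \asymp M''$ if $\mu_\theta(M') = \mu_\theta(M'')$, and $M' \succ M \succ M''$ if $\mu_\theta(M') > \mu_\theta(M'')$. Exclusivity follows from the trichotomy of $\leq$ on $\R$. I do not expect any serious obstacle: the only non-formal ingredient is the mediant inequality, whose proof is a one-line manipulation using positivity of the denominators.
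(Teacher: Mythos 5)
Your proof is correct, but it takes a different route from the paper. The paper's proof is a one-line reduction: it observes that $\deg_\theta$ is an additive function from $\cat{A}$ to $\R$ and $\mathrm{rk}$ a positive additive function from $\cat{A}$ to $\Z$, so that $\mu_\theta$ is precisely the $(c:r)$-slope of Rudakov \cite[Definition 3.1]{RSA}, and then cites \cite[Lemma 3.2 and Remark]{RSA} for the conclusion that the associated preorder is a stability structure. You instead verify the axioms directly: the total preorder axioms are pulled back from $\leq$ on $\R$, and the seesaw property is reduced, via additivity of $\deg_\theta$ and $\mathrm{rk}$ on short exact sequences, to the mediant inequality $\frac{a}{b} \leq \frac{a+c}{b+d} \leq \frac{c}{d}$ for $b,d>0$ with equality throughout exactly when $\frac{a}{b}=\frac{c}{d}$. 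This is essentially a self-contained reproof of Rudakov's lemma in the case at hand; it costs a short computation but makes the argument independent of the reference, whereas the paper's version is shorter and keeps the result explicitly tied to the general framework of stability structures it uses elsewhere. One small point worth making explicit in your write-up is that the mediant inequality must be applied in its strict form (strict inequality of the extremes forces strict inequalities on both sides of the mediant), since the seesaw property demands $M' \prec M \prec M''$ and not merely $M' \preceq M \preceq M''$; your parenthetical about when equality holds does cover this.
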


\begin{proof}
  Let $c(M) = \deg_\theta(M)$, and $r(M) = \rk{M}$, for every object
  $M$ of $\cat{A}$.  Then, $c$ is an additive function from $\cat{A}$
  to the ordered abelian group $\R$, and $r$ is a positive additive
  function from $\cat{A}$ to $\Z$.  Moreover, in the notation of
  \cite[Definition 3.1]{RSA}, the function $\mu_\theta$ is the
  $(c : r)$ slope, and the relation $\preceq_\theta$ the $(c : r)$
  preorder, on the set of non-zero objects of $\cat{A}$.  Therefore,
  it follows from \cite[Lemma 3.2 and Remark]{RSA} that
  $\preceq_\theta$ is a stability structure on $\cat{A}$.
\end{proof}

Let $\theta$ be a weight of $\cat{A}$.  An object of $\cat{A}$ is
called \emph{$\theta$-semistable} (respectively,
\emph{$\theta$-stable, $\theta$-polystable}) if it is semistable
(respectively, stable, polystable) with respect to the stability
structure $\preceq_\theta$ on $\cat{A}$.  If $\zeta$ is a strictly
positive real number, and $\omega=\zeta\theta$, then an object of
$\cat{A}$ is $\theta$-semistable (respectively, $\theta$-stable,
$\theta$-polystable) if and only if it is $\omega$-semistable
(respectively, $\omega$-stable, $\omega$-polystable).

There are obvious special versions of the statements in
Propositions~\ref{pro:1}--\ref{pro:2}, with semistable (respectively,
stable) objects replaced by $\theta$-semistable (respectively,
$\theta$-stable) objects of $\cat{A}$.  Moreover, since $\mathrm{rk}$
is a positive additive function from $\cat{A}$ to $\Z$, the category
$\cat{A}$ is Noetherian and Artinian.  In particular, by
Proposition~\ref{pro:3}, every $\theta$-semistable object of $\cat{A}$
has a Jordan-H\"older filtration, and every object of $\cat{A}$ has a
unique Harder-Narasimhan filtration, with respect to $\preceq_\theta$.
We say that two $\theta$-semistable objects of $\cat{A}$ are
\emph{$S_\theta$-equivalent} if they are $S$-equivalent with respect
to $\preceq_\theta$.

There is a well-known definition of the stability of objects of
$\cat{A}$ that is defined by King in \cite[p.~516]{KMR}.  Let
$\lambda$ be an additive function from $\cat{A}$ to $\R$.  Then, King
defines an object $M$ of $\cat{A}$ to be $\lambda$-semistable(respectively,
$\theta$-stable) if it is
non-zero, if $\lambda(M)=0$, and if $\lambda(N) \geq 0$ (respectively,
$\lambda(N) > 0$) for every non-zero proper subobject $N$ of $M$.  The
following Proposition shows that the notion of $\theta$-semistability
(respectively, $\theta$-stability) defined above is a special case of
this definition of $\lambda$-semistability (respectively,
$\lambda$-stability).

\begin{proposition}
  \label{pro:5}
  Let $\theta$ be a weight of $\cat{A}$, $\mu$ a real number, and $c$
  a strictly positive real number.  Define an additive function
  $\lambda$ from $\cat{A}$ to $\R$, by putting
  $\lambda(M) = c(\mu\,\rk{M} - \deg_\theta(M))$ for every object $M$
  of $\cat{A}$.  Let $O^{\mathrm{ss}}(\theta,\mu)$ (respectively,
  $O^{\mathrm{s}}(\theta,\mu)$) be the set of all $\theta$-semistable
  (respectively, $\theta$-stable) objects $M$ of $\cat{A}$, such that
  $\mu_\theta(M) = \mu$.  Let $K^{\mathrm{ss}}(\lambda)$
  (respectively, $K^{\mathrm{s}}(\lambda)$) be the set of all
  $\lambda$-semistable (respectively, $\lambda$-stable) objects of
  $\cat{A}$, in the sense of King.  Then,
  $O^{\mathrm{ss}}(\theta,\mu) = K^{\mathrm{ss}}(\lambda)$, and
  $O^{\mathrm{s}}(\theta,\mu) = K^{\mathrm{s}}(\lambda)$.
\end{proposition}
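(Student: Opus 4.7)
The plan is to reduce both set equalities to a single algebraic identity that converts the slope condition into the linear form used by King. For a non-zero object $M$ of $\cat{A}$, the rank $\rk{M}$ is a strictly positive integer, so I can factor
\begin{equation*}
  \lambda(M) \;=\; c\,\rk{M}\,\bigl(\mu - \mu_\theta(M)\bigr).
\end{equation*}
Because $c > 0$ and $\rk{M} > 0$, this yields the three equivalences I need for every non-zero $N$:
\begin{equation*}
  \lambda(N) = 0 \Liff \mu_\theta(N) = \mu, \quad
  \lambda(N) \geq 0 \Liff \mu_\theta(N) \leq \mu, \quad
  \lambda(N) > 0 \Liff \mu_\theta(N) < \mu.
\end{equation*}

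With this identity in hand, I would prove $O^{\mathrm{ss}}(\theta,\mu) = K^{\mathrm{ss}}(\lambda)$ by direct translation. For the inclusion $\subseteq$, take $M \in O^{\mathrm{ss}}(\theta,\mu)$; by definition $M \neq 0$ and $\mu_\theta(M) = \mu$, so $\lambda(M) = 0$; for every non-zero proper subobject $N$ of $M$, $\theta$-semistability gives $\mu_\theta(N) \leq \mu_\theta(M) = \mu$, hence $\lambda(N) \geq 0$, so $M \in K^{\mathrm{ss}}(\lambda)$. For the reverse inclusion, take $M \in K^{\mathrm{ss}}(\lambda)$; then $M \neq 0$ and $\lambda(M) = 0$ forces $\mu_\theta(M) = \mu$, while $\lambda(N) \geq 0$ for every non-zero proper subobject $N$ translates to $\mu_\theta(N) \leq \mu = \mu_\theta(M)$, so $M$ is $\theta$-semistable with $\mu_\theta(M) = \mu$, i.e., $M \in O^{\mathrm{ss}}(\theta,\mu)$.

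The proof of $O^{\mathrm{s}}(\theta,\mu) = K^{\mathrm{s}}(\lambda)$ is the same argument, only with the non-strict inequalities on subobjects replaced by strict ones, using the third equivalence above in place of the second. Since the displayed identity for $\lambda(M)$ is elementary and the translation is mechanical, there is no genuine obstacle in this proof; the only point requiring a moment's care is to note that $\rk{N} > 0$ for any non-zero subobject (which is guaranteed by the positivity of the family $(\phi_i)_{i \in I}$, and hence of $\mathrm{rk}$) so that one may divide by it when passing between the two formulations.
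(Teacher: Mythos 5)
Your proof is correct and is exactly the argument the paper intends: the paper's own proof is the single remark that $\rk{N}>0$ for every non-zero object $N$, which is precisely the fact you use to factor $\lambda(M) = c\,\rk{M}\bigl(\mu - \mu_\theta(M)\bigr)$ and convert the slope inequalities into King's linear ones. You have simply written out in full the translation the paper leaves implicit.
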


\begin{proof}
  The Proposition follows from the fact that $\rk{N}>0$ for every
  non-zero object $N$ of $\mathcal{A}$.
\end{proof}

\subsection{Facets with respect to a hyperplane arrangement}
\label{sec:facets-with-respect}

Let $E$ be an affine space modelled after a finite-dimensional
$\R$-vector space $T$.  We will give $T$ its usual topology.  A
\emph{hyperplane arrangement} in $E$ is a locally finite set of
hyperplanes in $E$.  We fix a hyperplane arrangement $\mathcal{H}$ in
$E$.  For any subset $X$ of $E$, we define
\begin{equation*}
  \label{eq:32}
  \mathcal{H}(X) =
  \set{H\in \mathcal{H} \suchthat H \cap X \neq \emptyset}.
\end{equation*}
If $X$ is a singleton $\set{x}$, we write $\mathcal{H}(x)$ for
$\mathcal{H}(X)$.

For every hyperplane $H$ in $E$, we define an equivalence relation
$\sim_H$ on $E$ by setting $x\sim_H y$ if $x$ and $y$ belong to $H$,
or if $x$ and $y$ are strictly on the same side of $H$.  We define
$\sim$ to be the equivalence relation on $E$, which is the
intersection of the relations $\sim_H$ as $H$ runs over $\mathcal{H}$.
The $\sim$-equivalence class of an element $a$ of $E$ is called the
\emph{facet} of $E$ through $a$ with respect to $\mathcal{H}$.

Let $F$ be a facet of $E$.  Then, for every point $a$ in $F$, and for
any element $H$ of $\mathcal{H}(a)$, we have $F\subset H$, hence
\begin{equation*}
  \label{eq:33}
  \mathcal{H}(F) = \mathcal{H}(a) =
  \set{H \in \mathcal{H} \suchthat F \subset H}.
\end{equation*}
In particular, since $\mathcal{H}$ is locally finite, the set
$\mathcal{H}(F)$ is finite.  The intersection $\supp{F}$ of all the
elements of $\mathcal{H}(F)$ is called the \emph{support} of $F$.  It
is an affine subspace of $E$, whose dimension is called the
\emph{dimension} of $F$, and is denoted by $\dim{F}$.  The closure
$\closure{F}$ of $F$ in $E$ is a subset of $\supp{F}$.

\begin{remark}
  \label{rem:4}
  Let $F$ be a facet of $E$, and $L$ its support.  Then, $F$ equals
  the interior of $\closure{F}$ in $L$.  In particular, $F$ is open in
  $L$ \cite[Chapter V, \S~1, no.~2, Proposition 3]{BLA2}.
\end{remark}

\subsection{The hyperplane arrangement on the weight space}
\label{sec:hyperpl-arrang-weigh}

Let $\cat{A}$ be an abelian category, $(\phi_i)_{i\in I}$ a non-empty
finite positive family of additive functions from $\cat{A}$ to $\Z$,
and $\mathrm{rk}$ a positive additive function from $\cat{A}$ to $\Z$,
as in Section \ref{sec:semist-with-resp}.  The $\R$-vector space
$\R^I$ is called the \emph{weight space} of $\cat{A}$.  We give it the
usual, that is, the product topology.

For all elements $\theta = (\theta_i)_{i\in I}$ of $\R^I$ and
$d = (d_i)_{i\in I}$ of $\N^I$, we define a real number
$\deg_\theta(d)$, and a natural number $\rk{d}$, by
\begin{equation*}
  \label{eq:34}
  \deg_\theta(d) = \sum_{i\in I} \theta_i d_i, \quad
  \rk{d} = \sum_{i\in I} d_i.
\end{equation*}
If $d$ is a non-zero element of $\N^I$, then $\rk{d}> 0$, so for each
$\theta\in \R^I$, we have a real number $\mu_\theta(d)$, which is
defined by
\begin{equation*}
  \label{eq:35}
  \mu_\theta(d) = \frac{\deg_\theta(d)}{\rk{d}}.
\end{equation*}
For any two non-zero elements $d$ and $e$ of $\N^I$, we define an
$\R$-linear function $f(d,e) : \R^I \to \R$ by
\begin{equation*}
  \label{eq:36}
  f(d,e)(\theta) = \mu_\theta(d) - \mu_\theta(e).
\end{equation*}
It is obvious that $f(d,e)=0$ if and only if $e\in\Q d$.

Fix a non-zero element $d$ of $\N^I$.  Let $S_d$ denote the set of all
elements $e$ of $\N^I\setminus \Q d$, for which there exist an object
$M$ of $\cat{A}$, and a subobject $N$ of $M$, such that $\phi(M) = d$
and $\phi(N) = e$.  Since the family $(\phi_i)_{i\in I}$ of additive
functions on $\cat{A}$ is positive, the set $S_d$ is contained in
$\prod_{i\in I}(\N\cap [0,d_i])$, and is hence finite.  For each
$e\in S_d$, let
\begin{equation*}
  \label{eq:41}
  H(d,e) =
  \Ker{f(d,e)} =
  \set{\theta\in \R^I \suchthat \mu_\theta(e) = \mu_\theta(d)}.
\end{equation*}
Since $e\notin \Q d$, the function $f(d,e)$ is non-zero, hence
$H(d,e)$ is a hyperplane in $\R^I$.  We thus get a finite hyperplane
arrangement
\begin{equation*}
  \label{eq:42}
  \mathcal{H}(d) =
  \set{H(d,e) \suchthat e\in S_d}
\end{equation*}
in the affine space $\R^I$.

We define $\mathrm{sgn}$ to be the function from $\R$ to the subset
$\set{-1,0,1}$ of $\R$, which is $-1$ at every negative real number,
vanishes at $0$, and is $1$ at every positive real number.

\begin{proposition}
  \label{pro:7}
  Let $F$ be a facet of $\R^I$ with respect to the hyperplane
  arrangement $\mathcal{H}(d)$, and let $\theta$ and $\omega$ be two
  elements of $F$.  Let $M$ be an object of $\cat{A}$, such that
  $\phi(M)=d$, and let $N$ be a non-zero subobject of $M$.  Then,
  \begin{equation*}
    \label{eq:45}
    \sgn{\mu_\theta(M)-\mu_\theta(N)} =
    \sgn{\mu_\omega(M)-\mu_\omega(N)}.
  \end{equation*}
\end{proposition}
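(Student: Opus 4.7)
The plan is to translate the sign statement into an assertion about the $\R$-linear form $f(d,e)$ with $e = \phi(N)$, and then invoke the defining property of the facet $F$ relative to the hyperplane $H(d,e)$.

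First, I would set $e = \phi(N)$. Since $N$ is non-zero and the family $(\phi_i)_{i\in I}$ is positive, $e$ is a non-zero element of $\N^I$. From the definition of $f(d,e)$ and the equalities $\phi(M)=d$, $\phi(N)=e$, I would rewrite
\[ f(d,e)(\theta) \;=\; \mu_\theta(d) - \mu_\theta(e) \;=\; \mu_\theta(M) - \mu_\theta(N), \]
and analogously for $\omega$. The desired conclusion thus reduces to the single equality $\sgn{f(d,e)(\theta)} = \sgn{f(d,e)(\omega)}$.

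Next, I would split into two cases. If $e \in \Q d$, then, as noted immediately after the definition of $f(d,e)$, the linear form $f(d,e)$ vanishes identically on $\R^I$, so both signs are zero and the conclusion holds trivially. Otherwise, the existence of the non-zero subobject $N \subset M$ with $\phi(M) = d$ and $\phi(N) = e \in \N^I\setminus \Q d$ shows by definition that $e \in S_d$, and therefore $H(d,e) = \Ker{f(d,e)}$ belongs to the hyperplane arrangement $\mathcal{H}(d)$.

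Finally, I would appeal to the definition of facet. Because $\theta$ and $\omega$ lie in the same facet $F$ of $\R^I$ with respect to $\mathcal{H}(d)$, they are in particular equivalent under $\sim_{H(d,e)}$, which means that either they both lie on $H(d,e)$, or they lie strictly on the same side of it. In the first alternative, $f(d,e)(\theta) = 0 = f(d,e)(\omega)$; in the second, $f(d,e)(\theta)$ and $f(d,e)(\omega)$ share a common non-zero sign. Either way, the two sign values coincide, which completes the proof. The argument is essentially a direct unwinding of the definitions, and I do not anticipate a real obstacle; the only subtlety is remembering to dispose of the degenerate case $e \in \Q d$ separately, since such $e$ contribute no hyperplane to $\mathcal{H}(d)$ and the conclusion there must come from the vanishing of $f(d,e)$ itself rather than from the facet condition.
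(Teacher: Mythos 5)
Your proposal is correct and follows essentially the same route as the paper: reduce to $\sgn{f(d,e)(\theta)}=\sgn{f(d,e)(\omega)}$ and invoke the relation $\theta\sim_{H(d,e)}\omega$, with the degenerate case handled by the identical vanishing of $f(d,e)$ when $e\in\Q d$. The only cosmetic difference is that the paper organizes the case split by whether $f(\theta)$ and $f(\omega)$ both vanish rather than by whether $e\in\Q d$, which amounts to the same thing.
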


\begin{proof}
  Let $e=\phi(N)$, and let $f = f(d,e) : \R^I \to \R$.  Then, for each
  weight $\lambda$ of $\cat{A}$, we have
  \begin{equation*}
    \label{eq:46}
    \mu_\lambda(M) - \mu_\lambda(N) =
    \mu_\lambda(d) - \mu_\lambda(e) =
    f(\lambda).
  \end{equation*}
  Therefore, we have to prove that
  $\sgn{f(\theta)} = \sgn{f(\omega)}$.  If
  $f(\theta) = f(\omega) = 0$, then the equality to be proved is
  obvious.  Suppose that either $f(\theta)$ or $f(\omega)$ is
  non-zero.  By interchanging $\theta$ and $\omega$, we can assume
  that $f(\theta) \neq 0$.  Then, $e\notin \Q d$.  As $\phi(M)=d$ and
  $\phi(N)=e$, this implies that $e\in S_d$.  Thus, the hyperplane
  $H=H(d,e)$ is an element of $\mathcal{H}(d)$.  Now, as $H=\Ker{f}$,
  $f$ is a defining function of $H$.  Moreover, $\theta \notin H$,
  since $f(\theta)\neq 0$.  As $\theta$ and $\omega$ both belong to
  the same facet $F$, we have $\theta \sim_H \omega$, so $\theta$ and
  $\omega$ are strictly on the same side of $H$.  Therefore,
  $f(\theta) f(\omega) > 0$.  It follows that
  $\sgn{f(\theta)} = \sgn{f(\omega)}$.
\end{proof}

\begin{proposition}
  \label{pro:8}
  Let $d$ be a non-zero element of $N^I$, $F$ a facet of $\R^I$ with
  respect to the hyperplane arrangement $\mathcal{H}(d)$, and $\theta$
  and $\omega$ two elements of $F$.  Let $M$ be an object of
  $\cat{A}$, such that $\phi(M) = d$.  Then, $M$ is
  $\theta$-semistable (respectively, $\theta$-stable) if and only if
  it is $\omega$-semistable (respectively, $\omega$-stable).
\end{proposition}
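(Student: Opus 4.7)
The proof is essentially a direct translation of the definitions of $\theta$-semistability and $\theta$-stability into sign conditions, followed by invocation of Proposition~\ref{pro:7}.

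The plan is to reformulate the stability conditions on $M$ in terms of the function $\mathrm{sgn}$. Recall that by definition, $M$ is $\theta$-semistable if and only if $N \preceq_\theta M$ for every non-zero proper subobject $N$ of $M$, i.e., $\mu_\theta(N) \leq \mu_\theta(M)$. This is equivalent to saying that
\begin{equation*}
  \sgn{\mu_\theta(M) - \mu_\theta(N)} \in \set{0,1}
\end{equation*}
for every such $N$. Similarly, $M$ is $\theta$-stable if and only if $\mu_\theta(N) < \mu_\theta(M)$ for every non-zero proper subobject $N$, which is equivalent to
\begin{equation*}
  \sgn{\mu_\theta(M) - \mu_\theta(N)} = 1
\end{equation*}
for every such $N$.

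Next, I apply Proposition~\ref{pro:7}: since $\theta$ and $\omega$ lie in the same facet $F$ of $\mathcal{H}(d)$, and since $\phi(M) = d$, we have
\begin{equation*}
  \sgn{\mu_\theta(M) - \mu_\theta(N)} = \sgn{\mu_\omega(M) - \mu_\omega(N)}
\end{equation*}
for every non-zero subobject $N$ of $M$ (noting that any non-zero proper subobject is, in particular, a non-zero subobject, so Proposition~\ref{pro:7} applies). Therefore, the sign condition characterising $\theta$-semistability (respectively $\theta$-stability) holds for all non-zero proper subobjects $N$ of $M$ if and only if the corresponding sign condition holds for $\omega$ in place of $\theta$. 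This yields the desired equivalence.

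There is no substantive obstacle: the content of the proposition is already contained in Proposition~\ref{pro:7}, and the only work is to observe that the quantifier over non-zero proper subobjects commutes with the sign-preserving passage from $\theta$ to $\omega$.
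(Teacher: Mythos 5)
Your proof is correct and follows essentially the same route as the paper: both reformulate $\theta$-semistability and $\theta$-stability as sign conditions on $\mu_\theta(M)-\mu_\theta(N)$ over non-zero proper subobjects $N$, and then invoke Proposition~\ref{pro:7} to transfer these signs from $\theta$ to $\omega$. The only detail the paper adds, which you omit but which is immediate, is the observation that $M$ is non-zero because $\phi(M)=d\neq 0$ and the family $(\phi_i)_{i\in I}$ is positive.
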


\begin{proof}
  As $\phi(M) = d$ is non-zero, and the family $(\phi_i)_{i\in I}$ is
  positive, $M$ is non-zero.  For each weight $\lambda$ of $\cat{A}$,
  and for each non-zero subobject $N$ of $M$, define
  \begin{equation*}
    \label{eq:47}
    g_\lambda(N) = \mu_\lambda(M) - \mu_\lambda(N).
  \end{equation*}
  Then, $M$ is $\lambda$-semistable if and only if
  $g_\lambda(N)\geq 0$, or equivalently, $\sgn{g_\lambda(N)}$ belongs
  to $\set{0,1}$, for every non-zero subobject $N$ of $M$.  Similarly,
  $M$ is $\lambda$-stable if and only if $\sgn{g_\lambda(N)} = 1$ for
  every non-zero proper subobject $N$ of $M$.  Therefore, it suffices
  to check that $\sgn{g_\theta(N)} = \sgn{g_\omega(N)}$ for every
  non-zero proper subobject $N$ of $M$.  As $\theta$ and $\omega$
  belong to the same facet $F$, this is a consequence of
  Proposition~\ref{pro:7}.
\end{proof}

\begin{proposition}
  \label{pro:9}
  Let $d$ be a non-zero element of $N^I$, $F$ a facet of $\R^I$ with
  respect to the hyperplane arrangement $\mathcal{H}(d)$, and $\theta$
  and $\omega$ two elements of $F$.  Let $M$ be an object of
  $\cat{A}$, such that $\phi(M)=d$.  Suppose that $M$ is
  $\theta$-semistable, and let $(M_i)_{i=0}^n$ be a Jordan-H\"older
  filtration of $M$ with respect to $\theta$.  Then, $M$ is
  $\omega$-semistable, and $(M_i)_{i=0}^n$ is a Jordan-H\"older
  filtration of $M$ with respect to $\omega$ also.
\end{proposition}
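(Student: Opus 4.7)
The plan is to check that $(M_i)_{i=0}^n$ satisfies every clause of the definition of a Jordan-H\"older filtration with respect to $\omega$, extracting each clause from its $\theta$-counterpart via the sign invariance provided by Proposition~\ref{pro:7}. The conditions on the shape of the sequence ($M_0 = M$, $M_n = 0$, $M_i \subset M_{i-1}$) are independent of the weight, so only the $\omega$-semistability of $M$, the equivalences $M_{i-1} \asymp_\omega M$, and the $\omega$-stability of the successive quotients need attention.

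First I would invoke Proposition~\ref{pro:8} to obtain the $\omega$-semistability of $M$. Next, for each $j \in \{0,\dots,n-1\}$, the subobject $M_j$ of $M$ is non-zero and satisfies $\mu_\theta(M_j) = \mu_\theta(M)$ since $M_j \asymp_\theta M$; Proposition~\ref{pro:7} then forces $\mu_\omega(M_j) = \mu_\omega(M)$, i.e., $M_j \asymp_\omega M$. In particular, $M_{i-1} \asymp_\omega M$ for $i = 1, \dots, n$. Combining this with additivity of rank and degree in the short exact sequence $0 \to M_i \to M_{i-1} \to M_{i-1}/M_i \to 0$ also yields $\mu_\omega(M_{i-1}/M_i) = \mu_\omega(M)$, both when $M_i$ is zero and when it is non-zero (in which case $M_i \asymp_\omega M$ by the step just completed).

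The main task is then to prove that each quotient $M_{i-1}/M_i$ is $\omega$-stable. Fix $i$, fix a non-zero proper subobject $L$ of $M_{i-1}/M_i$, and let $L'$ be its preimage in $M_{i-1}$ under the quotient map, so that $L \cong L'/M_i$ and $L'$ is a non-zero subobject of $M$ with $M_i \subset L' \subset M_{i-1}$ and both inclusions strict. A direct rank-degree computation on $0 \to M_i \to L' \to L \to 0$, using $\mu_\lambda(M_i) = \mu_\lambda(M)$ for $\lambda \in \{\theta,\omega\}$ (trivially so if $M_i = 0$, in which case $L' = L$), shows that $\mu_\lambda(M) - \mu_\lambda(L')$ and $\mu_\lambda(M) - \mu_\lambda(L)$ have the same sign. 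Applying Proposition~\ref{pro:7} to $L'$ as a non-zero subobject of $M$ then yields
\[
  \sgn{\mu_\omega(M)-\mu_\omega(L)}
  = \sgn{\mu_\omega(M)-\mu_\omega(L')}
  = \sgn{\mu_\theta(M)-\mu_\theta(L')}
  = \sgn{\mu_\theta(M)-\mu_\theta(L)},
\]
which equals $1$ by the $\theta$-stability of $M_{i-1}/M_i$. Since $\mu_\omega(M) = \mu_\omega(M_{i-1}/M_i)$ by the previous paragraph, this delivers $\mu_\omega(L) < \mu_\omega(M_{i-1}/M_i)$, as required.

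I expect the delicate point to be this descent from $L'$ to $L$: Proposition~\ref{pro:7} controls slope comparisons only for subobjects of an object whose dimension vector is $d$, whereas $\omega$-stability of $M_{i-1}/M_i$ is a condition on its subobjects, whose ambient dimension vector is generally not $d$. Bridging this gap is precisely what requires the prior information that $M_i$ shares the $\omega$-slope of $M$, without which the sign could shift under the weighted averaging of slopes in the short exact sequence $0 \to M_i \to L' \to L \to 0$.
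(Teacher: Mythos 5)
Your proof is correct and follows essentially the same route as the paper: reduce to sign comparisons via Proposition~\ref{pro:7}, pull subobjects of the quotients back to subobjects of $M$, and transfer the strict slope inequality across the short exact sequence $0 \to M_i \to L' \to L \to 0$ (the paper phrases this transfer as two applications of the seesaw property, which is the same weighted-average computation you carry out directly). The only differences are cosmetic: you handle the case $M_i = 0$ uniformly rather than treating $i = n$ separately.
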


\begin{proof}
  The fact that $M$ is $\omega$-semistable has already been proved in
  Proposition~\ref{pro:8}. For every $i=1,\dotsc,n$, we have
  $\mu_\theta(M_{i-1})=\mu_\theta(M)$, hence, by
  Proposition~\ref{pro:7},
  \begin{equation*}
    \label{eq:48}
    \sgn{\mu_\omega(M)-\mu_\omega(M_{i-1})} =
    \sgn{\mu_\theta(M)-\mu_\theta(M_{i-1})} = 0,
  \end{equation*}
  so $\mu_\omega(M_{i-1}) = \mu_\omega(M)$.  It remains to prove that
  the quotient object $N_i = M_{i-1}/M_i$ is $\omega$-stable for every
  $i=1,\dotsc,n$.

  We will first verify that $\mu_\omega(N_i) = \mu_\omega(M)$ for all
  $i=1,\dotsc,n$.  If $i=n$, this follows from the above paragraph,
  since $N_n=M_{n-1}$.  Suppose $1\leq i\leq n-1$.  Then, both $i$ and
  $i+1$ belong to $\set{1,\dotsc,n}$, hence, by the above paragraph,
  \begin{equation*}
    \label{eq:49}
    \mu_\omega(M_i) = \mu_\omega(M_{i-1}) =
    \mu_\omega(M).
  \end{equation*}
  We also have a short exact sequence
  \begin{equation*}
    \label{eq:50}
    0 \to M_i \to M_{i-1} \to N_i \to 0,
  \end{equation*}
  of non-zero objects of $\cat{A}$.  Therefore, by the seesaw property
  of $\preceq_\omega$, we get
  \begin{equation*}
    \label{eq:51}
    \mu_\omega(N_i) = \mu_\omega(M_{i-1}) =
    \mu_\omega(M).
  \end{equation*}
  This proves that $\mu_\omega(N_i) = \mu_\omega(M)$ for all
  $i=1,\dotsc,n$.

  Let $i\in \set{1,\dotsc,n}$.  In view of the previous paragraph, to
  show that $N_i$ is $\omega$-stable, it suffices to show that
  $\mu_\omega(X) < \mu_\omega(M)$ for every proper non-zero subobject
  $X$ of $N_i$.  To begin with, since $N_i$ is $\theta$-stable, we
  have
  \begin{equation*}
    \label{eq:52}
    \mu_\theta(X) < \mu_\theta(N_i) = \mu_\theta(M).
  \end{equation*}
  Suppose first that $i=n$.  Then, $N_i=M_{n-1}$, and $X$ is a
  non-zero subobject of $M$, hence, by Proposition~\ref{pro:7} and the
  above inequality,
  \begin{equation*}
    \label{eq:53}
    \sgn{\mu_\omega(M) - \mu_\omega(X)} =
    \sgn{\mu_\theta(M) - \mu_\theta(X)} = 1.
  \end{equation*}
  It follows that $\mu_\omega(X) < \mu_\omega(M)$.  Suppose next that
  $1\leq i\leq n-1$.  Let $\pi : M_{i-1} \to N_i$ be the canonical
  projection, and let $Y = \pi^{-1}(X)$, that is, the kernel of the
  composite
  \begin{equation*}
    \label{eq:54}
    M_{i-1} \xrightarrow{\pi} N_i \to N_i/X.
  \end{equation*}
  Thus, $Y$ is a non-zero subobject of $M_{i-1}$, and we have a short
  exact sequence
  \begin{equation*}
    \label{eq:55}
    0 \to M_i \to Y \to X \to 0
  \end{equation*}
  of non-zero objects of $\cat{A}$.  By the above paragraphs,
  \begin{equation*}
    \label{eq:56}
    \mu_\theta(M_i) = \mu_\theta(M) =
    \mu_\theta(N_i) > \mu_\theta(X).
  \end{equation*}
  Therefore, by the seesaw property of $\preceq_\theta$,
  \begin{equation*}
    \label{eq:57}
    \mu_\theta(M) = \mu_\theta(M_i) > \mu_\theta(Y),
  \end{equation*}
  hence, by Proposition~\ref{pro:7},
  \begin{equation*}
    \label{eq:58}
    \sgn{\mu_\omega(M) - \mu_\omega(Y)} =
    \sgn{\mu_\theta(M) - \mu_\theta(Y)} = 1,
  \end{equation*}
  so
  \begin{equation*}
    \label{eq:59}
    \mu_\omega(M_i) = \mu_\omega(M) > \mu_\omega(Y).
  \end{equation*}
  Again, by the seesaw property of $\preceq_\omega$, we get
  $\mu_\omega(M_i) > \mu_\omega(X)$, hence
  $\mu_\omega(M) > \mu_\omega(X)$.  This proves that $N_i$ is
  $\omega$-stable.
\end{proof}

\begin{proposition}
  \label{pro:10}
  Let $d$ be a non-zero element of $N^I$, $F$ a facet of $\R^I$ with
  respect to the hyperplane arrangement $\mathcal{H}(d)$, and $\theta$
  and $\omega$ two elements of $F$.  Let $M$ be an object of
  $\cat{A}$, such that $\phi(M) = d$.  Then, $M$ is
  $\theta$-polystable if and only if it is $\omega$-polystable.
\end{proposition}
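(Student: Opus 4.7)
The plan is to use the symmetry of $\theta$ and $\omega$ in the facet $F$ and prove only that $\theta$-polystability implies $\omega$-polystability, the converse following by swapping the roles. Assume $M$ is $\theta$-polystable. By definition, $M$ is $\theta$-semistable and $M \cong N_1 \oplus \cdots \oplus N_k$ for some $\theta$-stable objects $N_j$; since $\phi(M) = d$ is non-zero, we have $M \neq 0$ and $k \geq 1$, and we may replace $M$ by the right-hand side.

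The main step is to manufacture a Jordan-H\"older filtration of $M$ with respect to $\preceq_\theta$ out of this decomposition. I set $M_0 = M$, $M_k = 0$, and $M_i = N_{i+1} \oplus \cdots \oplus N_k$ for $1 \leq i \leq k - 1$, so that each successive quotient $M_{i-1}/M_i$ is isomorphic to the $\theta$-stable object $N_i$. It remains to verify that $M_i$ is $\theta$-semistable and $\asymp_\theta$-related to $M$ for $0 \leq i \leq k - 1$; this follows by induction on $i$ from Proposition~\ref{pro:1}(\ref{item:3}) applied to the identity $M_{i-1} = N_i \oplus M_i$, the base case $i = 0$ being trivial and the inductive step using that $\theta$-semistability of $M_{i-1}$ forces both $N_i$ and $M_i$ to be $\theta$-semistable with $N_i \asymp_\theta M_i \asymp_\theta M_{i-1} \asymp_\theta M$.

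With $(M_i)_{i=0}^{k}$ recognised as a Jordan-H\"older filtration of $M$ with respect to $\theta$, Proposition~\ref{pro:9} transfers it to a Jordan-H\"older filtration with respect to $\omega$; in particular, each $N_i \cong M_{i-1}/M_i$ is $\omega$-stable, so $M$ is a direct sum of $\omega$-stable objects. Combined with the $\omega$-semistability furnished by Proposition~\ref{pro:8} (or already by Proposition~\ref{pro:9}), this gives the $\omega$-polystability of $M$. The only delicate point I anticipate is the iterated application of Proposition~\ref{pro:1}(\ref{item:3}) needed to upgrade the a~priori pairwise $\asymp_\theta$-relations among the $N_j$ into the required relation $M_i \asymp_\theta M$ along the whole tail filtration; once this is in hand, the rest of the argument is purely formal.
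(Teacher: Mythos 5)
Your proposal is correct and follows essentially the same route as the paper: both build the tail filtration $M_i = N_{i+1}\oplus\cdots\oplus N_k$ from the given decomposition into $\theta$-stable summands, use Proposition~\ref{pro:1}(\ref{item:3}) to see that it is a Jordan-H\"older filtration with respect to $\theta$, and then invoke Proposition~\ref{pro:9} to transfer it to $\omega$ and conclude via Proposition~\ref{pro:8}. The inductive verification you flag as the delicate point is exactly the iterated application of Proposition~\ref{pro:1}(\ref{item:3}) that the paper also relies on, so there is no gap.
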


\begin{proof}
  Suppose $M$ is $\theta$-polystable.  Then, $M$ is
  $\theta$-semistable, and there exists a sequence $(M_i)_{i=1}^n$ of
  $\theta$-stable objects of $\cat{A}$, such that $n\in\N$, $M_i$ is
  $\theta$-stable for each $i=1,\dotsc,n$, and $M$ is isomorphic to
  $\bigoplus_{i=1}^n M_i$.  As $M$ is non-zero, we in fact have
  $n\geq 1$.  Also, by Proposition~\ref{pro:8}, $M$ is
  $\omega$-semistable.  Let $N = \bigoplus_{i=1}^n M_i$.  Then, since
  $N$ is isomorphic to $M$, it is both $\theta$-semistable and
  $\omega$-semistable, and $\phi(N) = d$.  For each $i=0,\dotsc,n$,
  define $N_i = \bigoplus_{j=i+1}^n M_i$.  Then, $(N_i)_{i=0}^n$ is a
  decreasing sequence of subobjects of $N$, $N_0 = N$, $N_n=0$, and
  for each $i=1,\dotsc,n$, $N_{i-1}/N_i$ is isomorphic to $M_i$, and
  is hence $\theta$-stable.  Moreover, since $N$ is
  $\theta$-semistable, by Proposition~\ref{pro:1}(\ref{item:3}), for
  every $i=1,\dotsc,n$, we get
  \begin{equation*}
    \label{eq:60}
    \mu_\theta(N) = \mu_\theta(M_i) =
    \mu_\theta(N_{i-1}).
  \end{equation*}
  Therefore, $(N_i)_{i=0}^n$ is a Jordan-H\"older filtration of $N$
  with respect to $\theta$.  By Proposition~\ref{pro:9}, it is a
  Jordan-H\"older filtration of $N$ with respect to $\omega$ also.  In
  particular, for each $i=1,\dotsc,n$, $N_{i-1}/N_i$ is
  $\omega$-stable, hence $M_i$ is $\omega$-stable.  As $M$ is
  $\omega$-semistable, and isomorphic to $\bigoplus_{i=1}^nM_i$, it
  follows that $M$ is $\omega$-polystable.
\end{proof}

\begin{proposition}
  \label{pro:11}
  Let $d$ be a non-zero element of $N^I$, $F$ a facet of $\R^I$ with
  respect to the hyperplane arrangement $\mathcal{H}(d)$, and $\theta$
  and $\omega$ two elements of $F$.  Let $M$ and $N$ be two objects of
  $\cat{A}$, such that $\phi(M) = \phi(N) = d$.  Suppose that $M$ and
  $N$ are $\theta$-semistable, and that $M$ is $S_\theta$-equivalent
  to $N$.  Then, $M$ and $N$ are $\omega$-semistable, and $M$ is
  $S_\omega$-equivalent to $N$.
\end{proposition}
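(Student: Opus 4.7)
The plan is to reduce the $S_\omega$-equivalence assertion to the $S_\theta$-equivalence assertion by showing that the very same Jordan-H\"older filtrations witness both equivalences.

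First, I would apply Proposition~\ref{pro:8} directly: since $\phi(M)=\phi(N)=d$, $\theta$ and $\omega$ lie in the same facet $F$ of $\mathcal{H}(d)$, and $M$ and $N$ are $\theta$-semistable, they are $\omega$-semistable as well. This settles the semistability half of the conclusion.

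Next, using the fact that $\cat{A}$ is Noetherian and Artinian (because $\mathrm{rk}$ is a positive additive function from $\cat{A}$ to $\Z$), I invoke Proposition~\ref{pro:3}(\ref{item:34}) applied to $\preceq_\theta$ to choose Jordan-H\"older filtrations $(M_i)_{i=0}^n$ of $M$ and $(N_j)_{j=0}^m$ of $N$ with respect to $\theta$. By the hypothesis that $M$ is $S_\theta$-equivalent to $N$, we have $n=m$ and there exists a permutation $\pi\in\sym{n}$ such that $M_{i-1}/M_i$ is isomorphic to $N_{\pi(i)-1}/N_{\pi(i)}$ for every $i=1,\dotsc,n$.

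Now I would apply Proposition~\ref{pro:9} twice, once to $M$ and once to $N$: since $\phi(M)=\phi(N)=d$, $\theta$ and $\omega$ lie in the same facet $F$ of $\mathcal{H}(d)$, and both $M$ and $N$ are $\theta$-semistable, the filtrations $(M_i)_{i=0}^n$ and $(N_j)_{j=0}^n$ are also Jordan-H\"older filtrations of $M$ and $N$, respectively, with respect to $\omega$. The same permutation $\pi$ and the same isomorphisms $M_{i-1}/M_i\cong N_{\pi(i)-1}/N_{\pi(i)}$ then witness the $S_\omega$-equivalence of $M$ and $N$, which completes the proof.

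I do not anticipate any real obstacle here: once Propositions \ref{pro:8} and \ref{pro:9} are in hand, $S$-equivalence with respect to a stability structure depends only on the existence of a permutation matching the stable subquotients of \emph{some} Jordan-H\"older filtrations, and Proposition~\ref{pro:9} precisely ensures that the $\theta$-filtrations can be reused verbatim as $\omega$-filtrations. The only point requiring a brief check is that the dimension-vector hypothesis $\phi(M)=\phi(N)=d$ is exactly what is needed to invoke Propositions~\ref{pro:8} and~\ref{pro:9}.
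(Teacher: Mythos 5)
Your proposal is correct and follows essentially the same route as the paper: pick Jordan--H\"older filtrations of $M$ and $N$ with respect to $\theta$, use Proposition~\ref{pro:9} to see that these same filtrations work for $\omega$ (the paper extracts the $\omega$-semistability from Proposition~\ref{pro:9} as well, which itself rests on Proposition~\ref{pro:8}), and then observe that the permutation witnessing $S_\theta$-equivalence witnesses $S_\omega$-equivalence verbatim. No gaps.
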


\begin{proof}
  Let $(M_i)_{i=0}^n$ and $(N_j)_{j=0}^m$ be Jordan-H\"older
  filtrations of $M$ and $N$, respectively, with respect to $\theta$.
  Then, by Proposition~\ref{pro:9}, $M$ and $N$ are
  $\omega$-semistable, and $(M_i)_{i=0}^n$ and $(N_j)_{j=0}^m$ are
  Jordan-H\"older filtrations of $M$ and $N$, respectively, with
  respect to $\omega$.  Now, because $M$ is $S_\theta$-equivalent to
  $N$, $n=m$, and there exists a permutation $\pi\in \sym{n}$, such
  that $M_{i-1}/M_i$ is isomorphic $N_{\pi(i)-1}/N_{\pi(i)}$ for every
  $i=1,\dotsc,n$.  As $(M_i)_{i=0}^n$ and $(N_j)_{j=0}^m$ are
  Jordan-H\"older filtrations of $M$ and $N$, respectively, with
  respect to $\omega$, it follows that $M$ is $S_\omega$-equivalent to
  $N$.
\end{proof}

\begin{remark}
  \label{rem:5}
  We use the following fact in the next proof.  Let $V$ be a
  finite-dimensional $\R$-vector space, and $V'$ a $\Q$-structure on
  $V$.  Let $(f_i)_{i\in I}$ be a family of $\R$-linear functions from
  $V$ to $\R$, which are $\Q$-rational.  Let
  $L=\bigcap_{i\in I}\Ker{f_i}$.  Then, the closure of $V'\cap L$ in
  $V$ equals $L$.
\end{remark}

\begin{proposition}
  \label{pro:12}
  Every facet of $\R^I$ with respect to the hyperplane arrangement
  $\mathcal{H}(d)$ contains an integral weight, that is, an element of
  $\Z^I$.
\end{proposition}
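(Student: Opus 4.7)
The plan is to produce a rational weight in $F$ first, and then rescale to obtain an integral one. First, observe that each hyperplane $H(d,e)\in\mathcal{H}(d)$ is the kernel of the $\R$-linear form $f(d,e)$ on $\R^I$, defined by $f(d,e)(\theta)=\mu_\theta(d)-\mu_\theta(e)$. Since $d,e\in\N^I$, this form has rational coefficients, so $H(d,e)$ is defined by a $\Q$-rational linear equation, and in particular passes through the origin.

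Let $L=\supp{F}$, so that $L=\bigcap_{H\in\mathcal{H}(F)} H$ is the intersection of finitely many $\Q$-rational hyperplanes through the origin. Hence $L$ is a $\Q$-rational linear subspace of $\R^I$, and Remark~\ref{rem:5} applied to the defining linear forms of the hyperplanes in $\mathcal{H}(F)$ shows that $\Q^I\cap L$ is dense in $L$. By Remark~\ref{rem:4}, $F$ is open in $L$, and $F$ is non-empty because it is an equivalence class of $\sim$. Therefore $F\cap \Q^I\neq\emptyset$, and I can pick some $\theta\in F\cap\Q^I$.

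To promote $\theta$ to an integral weight, I use the fact that all hyperplanes in $\mathcal{H}(d)$ pass through the origin, which implies that $F$ is stable under multiplication by positive real scalars: for any $c>0$ and any $H(d,e)\in\mathcal{H}(d)$, we have $\sgn{f(d,e)(c\theta)}=\sgn{f(d,e)(\theta)}$, so $c\theta\sim_H\theta$ for every $H\in\mathcal{H}(d)$, and hence $c\theta\in F$. Let $N$ be a positive common denominator of the rational coordinates of $\theta$; then $N\theta\in F\cap\Z^I$ is the desired integral weight.

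I expect the main subtlety to be the correct invocation of Remark~\ref{rem:5}: one must observe that $L$ is defined by rational linear equations in order to conclude that $\Q^I\cap L$ is dense in $L$. Once this density is in hand, openness of $F$ in $L$ immediately produces a rational point of $F$, and the homogeneity-based scaling argument is routine.
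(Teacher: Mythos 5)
Your proposal is correct and follows essentially the same route as the paper: both arguments use the $\Q$-rationality of the forms $f(d,e)$ together with Remark~\ref{rem:5} to get density of $\Q^I\cap L$ in $L=\supp{F}$, use Remark~\ref{rem:4} to find a rational point of $F$, and then scale by a positive integer, observing that the sign of each linear form $f(d,e)$ is unchanged under multiplication by a positive scalar. The only cosmetic difference is that you phrase the scaling step via stability of $F$ under positive dilations, whereas the paper verifies $\omega\sim_{H(d,e)}\xi$ hyperplane by hyperplane; these are the same computation.
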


\begin{proof}
  Every element $e$ of $S_d$ is an element of $\N^I$, hence $f(d,e)$
  is $\Q$-rational.  Let $F$ be a facet of $\R^I$ with respect to
  $\mathcal{H}(d)$, and let $L=\supp{F}$.  Let $K$ denote the set of
  all elements $e$ of $S_d$ such that $F\subset H(d,e)$.  Then,
  \begin{equation*}
    \label{eq:62}
    L = \bigcap_{e\in K} H(d,e) = \Ker{f(d,e)},
  \end{equation*}
  hence, by Remark~\ref{rem:5}, the closure of $\Q^I\cap L$ in $\R^I$
  equals $L$.  Now, by Remark~\ref{rem:4}, $F$ is open in $L$.
  Therefore, there exists an open subset $U$ of $\R^I$, such that
  $F=U\cap L$.  Let $\theta$ be an element of the non-empty set $F$.
  Then, $\theta$ belongs to the closure $L$ of $\Q^I\cap L$ in $\R^I$,
  and $U$ is an open neighbourhood of $\theta$ in $\R^I$, hence there
  exists an element $\xi$ in $(\Q^I\cap L)\cap U = \Q^I\cap F$.  Let
  $n$ be a strictly positive integer such that $\omega = n\xi$ belongs
  to $\Z^I$.  We claim that $\omega\in F$.  To see this, let
  $e\in S_d$.  Then, $f(d,e)$ is $\R$-linear, hence
  $f(d,e)(\omega) = nf(d,e)(\xi)$.  As $n>0$, this implies that
  $\sgn{f(d,e)(\omega)} = \sgn{f(d,e)(\xi)}$.  Since $f(d,e)$ is a
  defining function of $H(d,e)$, it follows that
  $\omega \sim_{H(d,e)} \xi$.  As this is true for all $e\in S_d$,
  $\omega\sim \xi$, hence $\omega$ belongs to the facet $F$ of $\R^I$
  through $\xi$.  Thus, $\omega$ is an element of $\Z^I\cap F$.
\end{proof}

\begin{proposition}
  \label{pro:13}
  Let $d$ be a non-zero element of $\N^I$, and let $\theta \in \R^I$.
  Then, there exists an integral weight $\omega$ of $\cat{A}$, with
  the following properties:
  \begin{enumerate}
  \item \label{item:38} Any object $M$ of $\cat{A}$, such that
    $\phi(M) = d$, is $\theta$-semistable (respectively,
    $\theta$-stable, $\theta$-polystable) if and only if it is
    $\omega$-semistable (respectively, $\omega$-stable,
    $\omega$-polystable).
  \item \label{item:39} If $M$ is a $\theta$-semistable object of
    $\cat{A}$ such that $\phi(M)=d$, then every Jordan-H\"older
    filtration of $M$ with respect to $\theta$ is a Jordan-H\"older
    filtration of $M$ with respect to $\omega$, and conversely.
  \item \label{item:40} Two $\theta$-semistable objects $M$ and $N$ of
    $\cat{A}$, such that $\phi(M) = \phi(N) = d$, are
    $S_\theta$-equivalent if and only if they are
    $S_\omega$-equivalent.
  \end{enumerate}
\end{proposition}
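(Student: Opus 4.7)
My plan is to assemble this proposition as a direct corollary of the facet-constancy results (Propositions~\ref{pro:8}, \ref{pro:9}, \ref{pro:10}, \ref{pro:11}) together with the rationality result (Proposition~\ref{pro:12}). The weight $\omega$ will be any integral weight lying in the same facet of $\R^I$ as $\theta$ with respect to the hyperplane arrangement $\mathcal{H}(d)$.

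First I would let $F$ be the facet of $\R^I$ with respect to $\mathcal{H}(d)$ that contains $\theta$. By Proposition~\ref{pro:12}, the facet $F$ contains some $\omega\in\Z^I$. Since $\theta$ and $\omega$ belong to the same facet $F$, I can now invoke each of the preceding propositions.

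For part~(\ref{item:38}), Proposition~\ref{pro:8} applied to any $M$ with $\phi(M)=d$ gives the equivalence of $\theta$-semistability with $\omega$-semistability, and of $\theta$-stability with $\omega$-stability. Proposition~\ref{pro:10} gives the equivalence of $\theta$-polystability with $\omega$-polystability. For part~(\ref{item:39}), Proposition~\ref{pro:9} applied to the pair $(\theta,\omega)$ shows that any Jordan-H\"older filtration of $M$ with respect to $\theta$ is also a Jordan-H\"older filtration with respect to $\omega$; applying Proposition~\ref{pro:9} to the pair $(\omega,\theta)$ instead, which is permissible since $\omega$ and $\theta$ play symmetric roles in the same facet $F$, yields the converse. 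For part~(\ref{item:40}), Proposition~\ref{pro:11} applied once to $(\theta,\omega)$ and once to $(\omega,\theta)$ gives that $S_\theta$-equivalence and $S_\omega$-equivalence coincide on pairs of objects of dimension vector $d$.

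There is no real obstacle here, as the work has all been done in the preceding propositions; the only thing to verify is that each of the cited results is indeed symmetric in the two weights chosen from the facet, which is transparent in each case since the hypothesis is merely that both weights lie in $F$.
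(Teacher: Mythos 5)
Your proposal is correct and matches the paper's own proof, which likewise obtains $\omega$ as an integral weight in the facet of $\theta$ via Proposition~\ref{pro:12} and then deduces all three assertions from Propositions~\ref{pro:8}--\ref{pro:11}. Your remark that the converse directions follow by swapping the roles of $\theta$ and $\omega$ within the same facet is the right (and only) point needing verification, and it holds for exactly the reason you give.
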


\begin{proof}
  This Proposition follows immediately from Propositions
  \ref{pro:8}--\ref{pro:11} and \ref{pro:12}.
\end{proof}

\section{Representations of quivers}
\label{sec:repr-quiv}

In this section, we will specialise the constructs of the previous
sections to the specific abelian category of the representations of a
quiver over a field.  We begin by defining quivers and their
representations.  We then formulate the notion of the semistability of
a representation of a quiver with respect to a weight, as an instance
of the general theory described in Section~\ref{sec:semist-with-resp}.
In the last part of the Section, we look at special Hermitian metrics
on complex representations of a quiver.

\subsection{The category of representations}
\label{sec:categ-repr}

A \emph{quiver} $Q$ is a quadruple $(Q_0,Q_1,\sr,\tg)$, where $Q_0$
and $Q_1$ are sets, and $\sr: Q_1\to Q_0$, and $\tg: Q_1\to Q_0$ are
functions.  The elements of $Q_0$ are called the \emph{vertices of
  $Q$}, and those of $Q_1$ are called the \emph{arrows of $Q$}.  For
any arrow $\alpha$ of $Q$, the vertex $\sr(\alpha)$ is called the
\emph{source} of $\alpha$, and the vertex $\tg(\alpha)$ is called the
\emph{target} of $\alpha$.  If $\sr(\alpha)=a$ and $\tg(\alpha)=b$,
then we say that $\alpha$ is an arrow \emph{from} $a$ \emph{to} $b$,
and write $\alpha:a\to b$.  We say that $Q$ is \emph{vertex-finite} if
the set $Q_0$ is finite, \emph{arrow-finite} if the set $Q_1$ is
finite, and \emph{finite} if it is both vertex-finite and
arrow-finite.  The quiver $(\emptyset,\emptyset,\sr,\tg)$, where $\sr$
and $\tg$ are the empty functions, is called the \emph{empty} quiver.
We say that a quiver $Q$ is \emph{non-empty} if it is not equal to the
empty quiver, or equivalently, if the set $Q_0$ of its vertices is
non-empty.

Let $k$ be a field.  A \emph{representation of $Q$ over $k$} is a pair
$(V,\rho)$, where $V=(V_a)_{a\in Q_0}$ is a family of
finite-dimensional $k$-vector spaces, and
$\rho=(\rho_\alpha)_{\alpha\in Q_1}$ is a family of $k$-linear maps
$\rho_\alpha : V_{\sr(\alpha)} \to V_{\tg(\alpha)}$.  We will often
drop the base field $k$ from the terminology.  If $(V,\rho)$ and
$(W,\sigma)$ are two representations of $Q$, then a \emph{morphism
  from $(V,\rho)$ to $(W,\sigma)$} is a family $f=(f_a)_{a\in Q_0}$ of
$k$-linear maps $f_a:V_a\to W_a$, such that for every $\alpha\in Q_1$,
the diagram
\begin{equation*}
  \label{eq:63}
  \xymatrix{%
    V_{\sr(\alpha)} \ar[r]^{\rho_\alpha} \ar[d]_{f_{\sr(\alpha)}}
    & V_{\tg(\alpha)} \ar[d]^{f_{\tg(\alpha)}} \\
    W_{\sr(\alpha)} \ar[r]_{\sigma_\alpha}
    & W_{\tg(\alpha)}}
\end{equation*}
commutes.  If $(V,\rho)$, $(W,\sigma)$, and $(X,\tau)$ are three
representations of $Q$, $f$ a morphism from $(V,\rho)$ to
$(W,\sigma)$, and $g$ a morphism from $(W,\sigma)$ to $(X,\tau)$, then
the \emph{composite} of $f$ and $g$ is the family $g\circ f$ defined
by $g\circ f = (g_a\circ f_a)_{a\in Q_0}$.  It is easy to verify that
$g\circ f$ is a morphism from $(V,\rho)$ to $(X,\tau)$.  We thus get a
category $\Repr[k]{Q}$, whose objects are representations of $Q$ over
$k$, and whose morphisms are defined as above.

For any two representations $(V,\rho)$ and $(W,\sigma)$ of $Q$, the
set $\Hom{(V,\rho)}{(W,\sigma)}$ is a $k$-subspace of the $k$-vector
space $\bigoplus_{a\in Q_0} \Hom[k]{V_a}{W_a}$.  If $(X,\tau)$ is
another representation of $Q$, then the composition operator
\begin{equation*}
  \label{eq:64}
  \Hom{(W,\sigma)}{(X,\tau)} \times \Hom{(V,\rho)}{(W,\sigma)} \to
  \Hom{(V,\rho)}{(X,\tau)}
\end{equation*}
is $k$-bilinear.  Any representation $(V,\rho)$ such that the
$k$-vector space $V_a$ is zero for all $a\in Q_0$ is a zero object in
this category.  For every finite family $(V_i,\rho_i)_{i\in I}$ of
representations of $Q$, the pair $(V,\rho)$, which is defined by
\begin{equation*}
  \label{eq:65}
  V_a = \bigoplus_{i\in I}V_{i,a},
  \quad
  \rho_\alpha = \bigoplus_{i\in I}\rho_{i,\alpha},
\end{equation*}
for all $a\in Q_0$ and $\alpha\in Q_1$, is a coproduct of
$(V_i,\rho_i)_{i\in I}$ in $\Repr[k]{Q}$.  Thus, $\Repr[k]{Q}$ is a
$k$-linear additive category.

If $f: (V,\rho)\to (W,\sigma)$ is a morphism of representations of
$Q$, then the pair $(V',\rho')$, where $V'_a = \Ker{f_a}$ for all
$a\in Q_0$, and $\rho'_\alpha : V'_{\sr(\alpha)} \to V'_{\tg(\alpha)}$
is the restriction of $\rho_\alpha$ for each $\alpha\in Q_1$, is a
representation of $Q$, and there is an obvious morphism
$i : (V',\rho') \to (V,\rho)$, which is given by the inclusion maps
$i_a : V'_a \to V_a$ for all $a\in Q_0$.  The representation
$(V',\rho')$, together with the morphism $i$, is a kernel of $f$ in
$\Repr[k]{Q}$.  Similarly, the pair $(W',\sigma')$, where
$W'_a=\Coker{f_a}$ for all $a\in Q_0$, and
$\sigma'_\alpha : W'_{\sr(\alpha)} \to W'_{\tg(\alpha)}$ is the
$k$-linear map induced by $\sigma_\alpha$ for each $\alpha\in Q_1$, is
a representation of $Q$, and there is a morphism
$\pi : (W,\sigma) \to (W',\sigma')$, which is given by the canonical
projections $\pi_a : W_a \to W'_a$ for all $a\in Q_0$.  The
representation $(W',\sigma')$, together with the morphism $\pi$, is a
cokernel of $f$ in $\Repr[k]{Q}$.  Thus, every morphism in this
additive category has a kernel and a cokernel.  It is obvious that the
canonical morphism from $\Coker{i}$ to $\Ker{\pi}$ is an isomorphism.
It follows that $\Repr[k]{Q}$ is a $k$-linear abelian category.

A \emph{subrepresentation} of a representation $(V,\rho)$ of $Q$ is a
subobject of $(V,\rho)$ in the category $\Repr[k]{Q}$.  Every family
$(V_i,\rho_i)_{i\in I}$ of subrepresentations of $(V,\rho)$ has a meet
$\bigcap_{i\in I}(V_i,\rho_i)$, and a join
$\sum_{i\in I}(V_i,\rho_i)$.  Thus, the category $\Repr[k]{Q}$ has all
meets and joins of subobjects.

\begin{remark}
  \label{rem:1}
  For every representation $(V,\rho)$ of $Q$, and for any element $c$
  of $k$, we have a representation $(V,c\rho)$ of $Q$, where
  $c\rho = (c\rho_\alpha)_{\alpha\in Q_1}$.  If $(W,\sigma)$ is a
  subrepresentation of $(V,\rho)$, then $(W,c\sigma)$ is a
  subrepresentation of $(V,c\rho)$.  It is also obvious that if
  $(V_i,\rho_i)_{i\in I}$ is a finite family of representations of
  $Q$, then
  \begin{equation*}
    \label{eq:66}
    (\bigoplus V_i, c(\bigoplus_{i\in I}\rho_i)) =
    (\bigoplus V_i, \bigoplus_{i\in I}(c\rho_i)).
  \end{equation*}
  Lastly, if $f: (V,\rho) \to (W,\sigma)$ is a morphism of
  representations of $Q$, then $f$ is also a morphism of
  representations from $(V,c\rho)$ to $(W,c\sigma)$.
\end{remark}

\subsection{Semistability and stability of representations}
\label{sec:semist-stab-repr}

Fix a non-empty vertex-finite quiver $Q$, and a field $k$.  For every
representation $(V,\rho)$ of $Q$ over $k$, and $a\in Q_0$, let
\begin{equation*}
  \label{eq:67}
  \dim[a]{V,\rho} = \dim[k]{V_a}.
\end{equation*}
Then, $(\dim[a]{V,\rho})_{a\in Q_0}$ is a non-empty finite positive
family of additive functions from the abelian category $\Repr[k]{Q}$
to $\Z$.  Therefore, the statements of Section
\ref{sec:stab-with-resp} are applicable here.  The following are the
versions for representations of some of the notions defined there.

For every representation $(V,\rho)$, the element
\begin{equation*}
  \label{eq:68}
  \dim{V,\rho} = (\dim[k]{V_a})_{a\in Q_0}
\end{equation*}
of $\N^{Q_0}$ is called the dimension vector of $(V,\rho)$, and the
natural number
\begin{equation*}
  \label{eq:69}
  \rk{V,\rho} = \sum_{a\in Q_0}\dim[k]{V_a}
\end{equation*}
is called the rank of $(V,\rho)$.

An element of the $\R$-vector space $\R^{Q_0}$ is called a weight of
$Q$.  We say that a weight is rational (respectively, integral) if it
belongs to the subset $\Q^{Q_0}$ (respectively, $\Z^{Q_0}$) of
$\R^{Q_0}$.  We fix a weight $\theta$ of $Q$.

For any representation $(V,\rho)$ of $Q$, the $\theta$-degree of
$(V,\rho)$ is the real number
\begin{equation*}
  \label{eq:70}
  \deg_\theta(V,\rho) = \sum_{a\in Q_0}\theta_a \dim[k]{V_a}.
\end{equation*}
If $(V,\rho)\neq 0$, the real number
\begin{equation*}
  \label{eq:71}
  \mu_\theta(V,\rho) = \frac{\deg_\theta(V,\rho)}{\rk{V,\rho}},
\end{equation*}
is called the $\theta$-slope of $(V,\rho)$.

A representation $(V,\rho)$ of $Q$ is called $\theta$-semistable
(respectively, $\theta$-stable) if it is non-zero, and if
\begin{equation*}
  \label{eq:72}
  \mu_\theta(W,\sigma) \leq \mu_\theta(V,\rho)
  \quad
  (\text{respectively,}~ \mu_\theta(W,\sigma) < \mu_\theta(V,\rho))
\end{equation*}
for every non-zero proper subrepresentation $(W,\sigma)$ of
$(V,\rho)$.  We say that a representation of $Q$ is
$\theta$-polystable if it is $\theta$-semistable, and is isomorphic to
the direct sum of a finite family of $\theta$-stable representations
of $Q$.  There are obvious versions for representations of all the
results of Section~\ref{sec:stab-with-resp}.

\begin{remark}
  \label{rem:2}
  For any non-zero element $c$ of $k$, the $\theta$-semistability
  (respectively, $\theta$-stability, $\theta$-polystability) of a
  representation $(V,\rho)$ of $Q$ over $k$ is equivalent to the
  $\theta$-semistability (respectively, $\theta$-stability,
  $\theta$-polystability) of $(V,c\rho)$.  Also, if $\zeta$ is a
  strictly positive real number, and $\omega=\zeta\theta$, then a
  representation of $Q$ is $\theta$-semistable (respectively,
  $\theta$-stable, $\theta$-polystable) if and only if it is
  $\omega$-semistable (respectively, $\omega$-stable,
  $\omega$-polystable).
\end{remark}

\subsection{Einstein-Hermitian metrics on complex representations}
\label{sec:einst-herm-metr}

Let $Q$ be a non-empty finite quiver, and fix a weight $\theta$ of
$Q$.  All the representations of $Q$ considered in this subsection
will be over $\C$.

A \emph{Hermitian metric} on a representation $(V,\rho)$ of $Q$ is a
family $h=(h_a)_{a\in Q_0}$ of Hermitian inner products
$h_a:V_a\times V_a \to \C$.  Given a Hermitian metric $h$ on
$(V,\rho)$, for every vertex $a$ of $Q_0$, we have an endomorphism
$K_\theta(V,\rho)_a$ of the $\C$-vector space $V_a$, which is defined
by
\begin{equation*}
  \label{eq:73}
  K_\theta(V,\rho)_a =
  \theta_a \id{V_a} +
  \sum_{\alpha\in \tg^{-1}(a)} \rho_\alpha \circ \rho_\alpha^* -
  \sum_{\alpha\in \sr^{-1}(a)}\rho_\alpha^* \circ \rho_\alpha,
\end{equation*}
where, for each $\alpha\in Q_1$,
$\rho_\alpha^* : V_{\tg(\alpha)} \to V_{\sr(\alpha)}$ is the adjoint
of $\rho_\alpha : V_{\sr(\alpha)} \to V_{\tg(\alpha)}$ with respect to
the Hermitian inner products $h_{\sr(\alpha)}$ and $h_{\tg(\alpha)}$
on $V_{\sr(\alpha)}$ and $V_{\tg(\alpha)}$, respectively.  We say that
the metric $h$ is \emph{Einstein-Hermitian} with respect to $\theta$
if there exists a constant $c\in \C$, such that
\begin{equation*}
  \label{eq:74}
  K_\theta(V,\rho)_a = c \id{V_a}
\end{equation*}
for all $a\in Q_0$.  If this is the case, and if $(V,\rho)$ is
non-zero, then it is easy to see that $c=\mu_\theta(V,\rho)$, hence
\begin{equation*}
  \label{eq:75}
  \sum_{\alpha\in \tg^{-1}(a)} \rho_\alpha \circ \rho_\alpha^* -
  \sum_{\alpha\in \sr^{-1}(a)}\rho_\alpha^* \circ \rho_\alpha =
  (\mu_\theta(V,\rho) - \theta_a) \; \id{V_a}
\end{equation*}
for all $a\in Q_0$.  If we are considering more than one Hermitian
metric on $(V,\rho)$, and want to indicate the dependence of
$K_\theta(V,\rho)$ on the metric, we will write $K_\theta(V,\rho,h)$
instead of $K_\theta(V,\rho)$.

The following Proposition is a consequence of \cite[Proposition
6.5]{KMR}.  The restriction to rational weights here is due to the
fact that the cited result is proved in that reference only for
integral weights.

\begin{proposition}
  \label{pro:14}
  Let $\theta$ be a rational weight of $Q$, and $(V,\rho)$ a non-zero
  representation of $Q$.  Then, $(V,\rho)$ has an Einstein-Hermitian
  metric with respect to $\theta$ if and only if it is
  $\theta$-polystable.  Moreover, if $h_1$ and $h_2$ are two
  Einstein-Hermitian metrics on $(V,\rho)$ with respect to $\theta$,
  then there exists an automorphism $f$ of $(V,\rho)$, such that
  \begin{equation*}
    \label{eq:79}
    h_{1,a}(v,w) = h_{2,a}(f_a(v),f_a(w))
  \end{equation*}
  for all $a\in Q_0$ and $v,w\in V_a$.
\end{proposition}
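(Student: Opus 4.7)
The strategy I will adopt is to reduce the rational-weight statement to the integral-weight case, where the corresponding result is already established as \cite[Proposition~6.5]{KMR}, by a simultaneous rescaling of the weight and of the representation maps. Since $\theta$ is rational, I will first choose a positive integer $n$ such that $\omega = n\theta$ lies in $\Z^{Q_0}$, and will then apply King's result to the representation $(V, \sqrt{n}\rho)$ with respect to the integral weight $\omega$, transporting the existence and uniqueness statements back to $(V,\rho)$ at weight $\theta$.

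The key computational step will be the identity $K_\omega(V, \sqrt{n}\rho, h)_a = n\, K_\theta(V,\rho,h)_a$ for every vertex $a$ and every Hermitian metric $h$ on $V$. This holds because the $h$-adjoint of $\sqrt{n}\rho_\alpha$ is $\sqrt{n}\rho_\alpha^*$ (using that $\sqrt{n}$ is real), whence $\sqrt{n}\rho_\alpha \circ \sqrt{n}\rho_\alpha^* = n\, \rho_\alpha \circ \rho_\alpha^*$, while the weight contribution $\omega_a\id{V_a} = n\theta_a\id{V_a}$ supplies the remaining factor of $n$. Combined with the identity $\mu_\omega(V,\sqrt{n}\rho) = n\mu_\theta(V,\rho)$, this yields the crucial equivalence: $h$ is an Einstein-Hermitian metric on $(V,\sqrt{n}\rho)$ with respect to $\omega$ if and only if $h$ is an Einstein-Hermitian metric on $(V,\rho)$ with respect to $\theta$.

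Assembling the pieces, the existence assertion will follow by observing via Remark~\ref{rem:2} (applied first with the positive real scalar $n$ rescaling the weight, then with the nonzero complex scalar $\sqrt{n}$ rescaling the representation) that $(V,\rho)$ is $\theta$-polystable if and only if $(V,\sqrt{n}\rho)$ is $\omega$-polystable, and then invoking \cite[Proposition~6.5]{KMR} for the pair $((V,\sqrt{n}\rho), \omega)$. For uniqueness, given two $\theta$-Einstein-Hermitian metrics $h_1, h_2$ on $(V,\rho)$, the key identity identifies them with $\omega$-Einstein-Hermitian metrics on $(V,\sqrt{n}\rho)$; the uniqueness clause of King's Proposition then furnishes an automorphism $f$ of $(V,\sqrt{n}\rho)$ satisfying the stated intertwining relation, and by Remark~\ref{rem:1} this $f$ is at the same time an automorphism of $(V,\rho)$.

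I do not anticipate a serious obstacle: the argument is a clean rescaling reduction built on the flexibility provided by Remarks~\ref{rem:1} and~\ref{rem:2}. The only point requiring care is the choice of a real scalar $\sqrt{n}$ to multiply $\rho$ by, which ensures that the $h$-adjoint transforms with the same real factor and makes the identity $K_\omega(V,\sqrt{n}\rho,h)_a = nK_\theta(V,\rho,h)_a$ fall out transparently; an arbitrary nonzero complex scalar would work equally well, at the cost of a harmless modulus-squared factor.
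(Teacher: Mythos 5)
Your proposal is correct and follows the same route as the paper, which simply declares the Proposition to be a consequence of King's Proposition~6.5 for integral weights without spelling out any details. Your explicit reduction---replacing $\theta$ by $\omega=n\theta\in\Z^{Q_0}$ and $\rho$ by $\sqrt{n}\rho$ so that $K_\omega(V,\sqrt{n}\rho,h)_a=n\,K_\theta(V,\rho,h)_a$ and $\mu_\omega(V,\sqrt{n}\rho)=n\mu_\theta(V,\rho)$, then transporting polystability and automorphisms back via Remarks~\ref{rem:1} and~\ref{rem:2}---correctly supplies the rescaling argument that the paper leaves implicit, and the choice of the \emph{real} scalar $\sqrt{n}$ is exactly the point that makes the adjoints, and hence the Einstein--Hermitian equations, match up.
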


Given a Hermitian metric $h$ on a representation $(V,\rho)$ of $Q$, we
say that two subrepresentations $(V_1,\rho_1)$ and $(V_2,\rho_2)$ of
$(V,\rho)$ are \emph{orthogonal} with respect to $h$ if for every
$a\in Q_0$, the subspaces $V_{1,a}$ and $V_{2,a}$ of $V_a$ are
orthogonal with respect to the Hermitian inner product $h_a$ on $V_a$.

\begin{corollary}
  \label{cor:1}
  Let $\theta$ be a rational weight of $Q$, $(V,\rho)$ a non-zero
  representation of $Q$, and $h$ an Einstein-Hermitian metric on
  $(V,\rho)$ with respect to $\theta$.  Then, there exists a finite
  family $(V_i,\rho_i)_{i\in I}$ of $\theta$-stable subrepresentations
  of $Q$, such that $(V,\rho) = \bigoplus_{i\in I}(V_i,\rho_i)$, and
  such that, for all $i,j\in I$ with $i\neq j$, $(V_i,\rho_i)$ and
  $(V_j,\rho_j)$ are orthogonal with respect to $h$.
\end{corollary}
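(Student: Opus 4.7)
The plan is to apply Proposition~\ref{pro:14} twice: once in its existence form to extract some decomposition into $\theta$-stable subrepresentations from the polystability of $(V,\rho)$, and once in its uniqueness form to transport that decomposition into one which is orthogonal with respect to the given metric $h$.

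First, I would observe that since $(V,\rho)$ admits an Einstein-Hermitian metric, namely $h$, Proposition~\ref{pro:14} implies it is $\theta$-polystable. By definition, this yields a finite family $(V'_i,\rho'_i)_{i\in I}$ of $\theta$-stable subrepresentations of $(V,\rho)$ such that $(V,\rho) = \bigoplus_{i\in I}(V'_i,\rho'_i)$.

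Next, I would construct an auxiliary Einstein-Hermitian metric $\tilde h$ on $(V,\rho)$ for which \emph{this particular} decomposition is orthogonal. Each $(V'_i,\rho'_i)$ is $\theta$-stable and hence $\theta$-polystable, so by Proposition~\ref{pro:14} it admits some Einstein-Hermitian metric $\tilde h_i$. Define $\tilde h$ as the orthogonal direct sum: on $V_a = \bigoplus_i V'_{i,a}$, declare the summands pairwise orthogonal and restrict to $\tilde h_{i,a}$ on each. Since adjoints split across orthogonal direct sums, a short computation gives $K_\theta(V,\rho,\tilde h)_a = \bigoplus_{i\in I} \mu_\theta(V'_i,\rho'_i)\,\id{V'_{i,a}}$. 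By Proposition~\ref{pro:1}(\ref{item:3}) applied to the $\theta$-semistable representation $(V,\rho)$, all the slopes $\mu_\theta(V'_i,\rho'_i)$ coincide with $\mu_\theta(V,\rho)$, so $\tilde h$ is Einstein-Hermitian with constant $\mu_\theta(V,\rho)$, and the $(V'_i,\rho'_i)$ are pairwise orthogonal with respect to $\tilde h$.

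Finally, I would invoke the uniqueness part of Proposition~\ref{pro:14} applied to the two Einstein-Hermitian metrics $\tilde h$ and $h$ on $(V,\rho)$: there is an automorphism $f$ of $(V,\rho)$ with $\tilde h_a(v,w) = h_a(f_a(v),f_a(w))$ for every $a\in Q_0$ and $v,w\in V_a$. Setting $V_{i,a} = f_a(V'_{i,a})$ for each $a\in Q_0$ defines $\theta$-stable subrepresentations $(V_i,\rho_i)$ of $(V,\rho)$ that are isomorphic to $(V'_i,\rho'_i)$ via $f$, and since $f$ is an automorphism of representations, $(V,\rho) = \bigoplus_{i\in I}(V_i,\rho_i)$. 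For $i\neq j$ and any $v = f_a(v')\in V_{i,a}$, $w = f_a(w')\in V_{j,a}$, orthogonality with respect to $h$ follows from $h_a(v,w) = h_a(f_a(v'),f_a(w')) = \tilde h_a(v',w') = 0$. The only genuine piece of work is the verification in the second step that the orthogonal direct-sum metric is Einstein-Hermitian; everything else is formal once Proposition~\ref{pro:14} is in hand.
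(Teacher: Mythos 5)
Your proposal is correct and follows essentially the same route as the paper: decompose via polystability, build an Einstein--Hermitian metric as an orthogonal direct sum of Einstein--Hermitian metrics on the stable summands, and transport the decomposition by the automorphism supplied by the uniqueness part of Proposition~\ref{pro:14}. The only difference is that you explicitly verify (via the splitting of adjoints and the equality of slopes from Proposition~\ref{pro:1}(\ref{item:3})) that the direct-sum metric is Einstein--Hermitian, a step the paper simply asserts.
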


\begin{proof}
  By Proposition~\ref{pro:14}, $(V,\rho)$ is $\theta$-polystable,
  hence there exists a finite family $(W_i,\sigma_i)_{i\in I}$ of
  $\theta$-stable subrepresentations of $Q$, such that
  $(V,\rho) = \bigoplus_{i\in I}(W_i,\sigma_i)$.  For each $i\in I$,
  there exists an Einstein-Hermitian metric $h_i$ on $(W_i,\sigma_i)$.
  Let $h'$ denote the Hermitian metric $\oplus_{i\in I} h_i$ on
  $(V,\rho)$.  Then, $h'$ is an Einstein-Hermitian metric on
  $(V,\rho)$.  Therefore, there exists an automorphism $f$ of
  $(V,\rho)$, such that $h'_a(v,w) = h_a(f_a(v),f_a(w))$ for all
  $a\in Q_0$ and $v,w\in V_a$.  Let $(V_i,\rho_i) = f((W_i,\sigma_i))$
  for every $i\in I$.  Then, $(V_i,\rho_i)_{i\in I}$ is the desired
  family of subrepresentations of $(V,\rho)$.
\end{proof}

Let $h$ be a Hermitian metric on $(V,\rho)$.  We say that an
endomorphism $f$ of $(V,\rho)$ is \emph{skew-Hermitian} with respect
to $h$ if for every $a\in Q_0$, the endomorphism $f_a$ of $V_a$ is
skew-Hermitian with respect to $h_a$, that is,
\begin{equation*}
  \label{eq:87}
  h_a(f_a(v),w) + h_a(v,f_a(w)) = 0
\end{equation*}
for all $v,w\in V_a$.  We denote the set of all skew-Hermitian
endomorphisms of $(V,\rho)$ with respect to $h$ by $\End{V,\rho,h}$.
It is an $\R$-subspace of the $\C$-vector space $\End{V,\rho}$.  We
say that $h$ is \emph{irreducible} if for every endomorphism $f$ of
$(V,\rho)$ that is skew-Hermitian with respect to $h$, there exists
$\lambda \in \C$, such that $f=\lambda\id{(V,\rho)}$.  The complex
number $\lambda$ is then purely imaginary, hence $h$ is irreducible if
and only if
\begin{equation*}
  \label{eq:88}
  \End{V,\rho,h} = \sqrt{-1} \, \R \, \id{(V,\rho)}.
\end{equation*}

The following result on irreducible representations can be proved
easily.
\begin{proposition}
  \label{pro:15}
  Let $(V,\rho)$ be a non-zero representation of $Q$, and $h$ a
  Hermitian metric on $(V,\rho)$.  Then, the following are equivalent:
  \begin{enumerate}
  \item \label{item:41} $h$ is irreducible.
  \item \label{item:42} If $(V_i,\rho_i)_{i\in I}$ is a finite family
    of subrepresentations of $(V,\rho)$,
    \begin{equation*}
      \label{eq:89}
      (V,\rho) = \bigoplus_{i\in I}(V_i,\rho_i),
    \end{equation*}
    and $(V_i,\rho_i)$ and $(V_j,\rho_j)$ are orthogonal with respect
    to $h$ for all $i,j\in I$ with $i\neq j$, then there exists
    $i\in I$, such that $(V_i,\rho_i)=(V,\rho)$.
  \end{enumerate}
\end{proposition}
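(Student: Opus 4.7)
The plan is to prove the two implications directly, using the spectral theory of skew-Hermitian operators on finite-dimensional complex inner product spaces. The key observation is that any endomorphism of $(V,\rho)$ which is diagonalisable at each vertex induces an orthogonal decomposition into subrepresentations indexed by its eigenvalues: the intertwining relation $f_{\tg(\alpha)} \circ \rho_\alpha = \rho_\alpha \circ f_{\sr(\alpha)}$ forces each $\rho_\alpha$ to preserve every eigenspace, so the eigenspaces of a skew-Hermitian morphism automatically assemble into subrepresentations of $(V,\rho)$.

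For \ref{item:42} $\Rightarrow$ \ref{item:41}, I would take $f \in \End{V,\rho,h}$ and diagonalise each $f_a$ on $(V_a, h_a)$ to get an orthogonal decomposition $V_a = \bigoplus_\lambda V_{a,\lambda}$ indexed by the purely imaginary eigenvalues of $f_a$. By the observation above, this packages into an orthogonal decomposition $(V,\rho) = \bigoplus_\lambda (V_\lambda, \rho_\lambda)$ of $(V,\rho)$ into subrepresentations. Condition \ref{item:42} then forces exactly one eigenvalue $\lambda$ to appear on $(V,\rho)$, whence $f = \lambda \id{(V,\rho)}$ as required.

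For \ref{item:41} $\Rightarrow$ \ref{item:42}, given an orthogonal decomposition $(V,\rho) = \bigoplus_{i\in I}(V_i,\rho_i)$, I would manufacture a skew-Hermitian endomorphism whose spectrum separates the summands. Since $I$ is finite, pick pairwise distinct real numbers $(t_i)_{i\in I}$ and let $p_{i,a}$ denote the projection of $V_a$ onto $V_{i,a}$ associated with the given direct sum decomposition. Because each $(V_i,\rho_i)$ is a subrepresentation, the family $p_i = (p_{i,a})_{a\in Q_0}$ is an endomorphism of $(V,\rho)$, and orthogonality of the decomposition with respect to $h$ makes each $p_{i,a}$ self-adjoint for $h_a$. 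Therefore $f = \sum_{i\in I}\sqrt{-1}\,t_i\,p_i$ lies in $\End{V,\rho,h}$. Irreducibility of $h$ gives $f = \lambda \id{(V,\rho)}$ for some $\lambda$; comparing on each non-zero summand forces $\sqrt{-1}\,t_i = \lambda$ whenever $(V_i,\rho_i)\neq 0$, and distinctness of the $t_i$ then forces at most one such summand to exist. Since $(V,\rho)\neq 0$, exactly one summand is non-zero and equals $(V,\rho)$. I do not foresee any genuine obstacle; the argument is essentially the Hermitian counterpart of the standard correspondence between diagonalisable endomorphisms and direct sum decompositions, and the only point requiring care is that vertex-wise projections attached to a decomposition by subrepresentations automatically commute with every $\rho_\alpha$.
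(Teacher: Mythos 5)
Your proof is correct, and it is complete where the paper is not: the authors state Proposition~\ref{pro:15} with only the remark that it ``can be proved easily'' and give no argument. Both of your implications are sound --- the eigenspace decomposition of a skew-Hermitian endomorphism does assemble into an orthogonal decomposition by subrepresentations because the intertwining relation $f_{\tg(\alpha)}\circ\rho_\alpha=\rho_\alpha\circ f_{\sr(\alpha)}$ sends $\lambda$-eigenvectors to $\lambda$-eigenvectors, and conversely the vertex-wise orthogonal projections attached to an orthogonal decomposition by subrepresentations commute with every $\rho_\alpha$ and are self-adjoint, so your $f=\sum_i\sqrt{-1}\,t_i p_i$ is indeed skew-Hermitian and separates the summands. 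This is evidently the intended argument, and no gap remains.
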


\begin{proposition}
  \label{pro:16}
  Let $\theta$ be a rational weight of $Q$, $(V,\rho)$ a non-zero
  representation of $Q$, and $h$ an Einstein-Hermitian metric on
  $(V,\rho)$ with respect to $\theta$.  Then, the following are
  equivalent:
  \begin{enumerate}
  \item \label{item:43} $h$ is irreducible.
  \item \label{item:44} $(V,\rho)$ is $\theta$-stable.
  \item \label{item:45} $(V,\rho)$ is Schur.
  \end{enumerate}
\end{proposition}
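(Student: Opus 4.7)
The plan is to prove the three-way equivalence via $(2)\Liff(3)$ and $(1)\Liff(2)$. Every implication reduces to a direct appeal to an earlier result in the excerpt, so the work lies in assembling them correctly rather than in any new estimate.

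For $(2)\Rightarrow(3)$ I would quote the Schur Lemma (Proposition~\ref{pro:2}(\ref{item:24})) directly. For $(3)\Rightarrow(2)$ I would use the hypothesis that $h$ is Einstein-Hermitian: by Proposition~\ref{pro:14}, $(V,\rho)$ is $\theta$-polystable, hence isomorphic to a finite direct sum $\bigoplus_{i=1}^{n}(W_i,\sigma_i)$ of $\theta$-stable summands with $n\geq 1$. If $n\geq 2$, the projection onto $(W_1,\sigma_1)$ along the remaining summands is a nontrivial idempotent endomorphism of $(V,\rho)$, so $\End{V,\rho}$ is not a division ring, contradicting the Schur hypothesis. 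Hence $n=1$, which identifies $(V,\rho)$ with a $\theta$-stable representation.

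For $(2)\Rightarrow(1)$ I would combine the implication $(2)\Rightarrow(3)$ already proved with Proposition~\ref{pro:2}(\ref{item:25}), applicable since $\C$ is algebraically closed and $\End{V,\rho}$ is finite-dimensional over $\C$; this gives $\End{V,\rho}=\C\,\id{(V,\rho)}$. A skew-Hermitian endomorphism $f=\lambda\,\id{(V,\rho)}$ then satisfies $(\lambda+\bar\lambda)h_a(v,w)=0$ for all $v,w$, forcing $\lambda\in\sqrt{-1}\,\R$, which is precisely irreducibility. For $(1)\Rightarrow(2)$, I would apply Corollary~\ref{cor:1} to produce a decomposition $(V,\rho)=\bigoplus_{i\in I}(V_i,\rho_i)$ into pairwise $h$-orthogonal $\theta$-stable subrepresentations and then invoke Proposition~\ref{pro:15} to conclude that irreducibility of $h$ forces $|I|=1$, so $(V,\rho)$ itself is $\theta$-stable.

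I do not anticipate a real obstacle. The only place requiring a moment's care is the projection argument in $(3)\Rightarrow(2)$, where one must verify that the summand projection really lies in $\End{V,\rho}$; this is immediate since the $(W_i,\sigma_i)$ are subrepresentations and the decomposition is taken in $\Repr[\C]{Q}$.
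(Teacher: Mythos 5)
Your proposal is correct and uses essentially the same ingredients and arguments as the paper: Corollary~\ref{cor:1} together with Proposition~\ref{pro:15} for (\ref{item:43})$\Rightarrow$(\ref{item:44}), the Schur Lemma for (\ref{item:44})$\Rightarrow$(\ref{item:45}), and Proposition~\ref{pro:2}(\ref{item:25}) to show that a Schur representation has irreducible metric. The only organizational difference is that the paper closes the cycle (\ref{item:43})$\Rightarrow$(\ref{item:44})$\Rightarrow$(\ref{item:45})$\Rightarrow$(\ref{item:43}) with three implications, whereas you add a separate direct proof of (\ref{item:45})$\Rightarrow$(\ref{item:44}) via the idempotent in the polystable decomposition; that argument is sound but logically redundant, since your proof of (\ref{item:44})$\Rightarrow$(\ref{item:43}) uses only the Schur property and hence already yields (\ref{item:45})$\Rightarrow$(\ref{item:43}).
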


\proof (\ref{item:43})\Limp(\ref{item:44}): Suppose $h$ is
irreducible.  Since $h$ is Einstein-Hermitian with respect to
$\theta$, by Corollary~\ref{cor:1}, there exists a finite family
$(V_i,\rho_i)_{i\in I}$ of $\theta$-stable subrepresentations of $Q$,
such that $(V,\rho) = \bigoplus_{i\in I}(V_i,\rho_i)$, and such that,
for all $i,j\in I$ with $i\neq j$, $(V_i,\rho_i)$ and $(V_j,\rho_j)$
are orthogonal with respect to $h$.  As $h$ is irreducible, by
Proposition~\ref{pro:15}, there exists $i\in I$ such that
$(V,\rho)=(V_i,\rho_i)$.  Therefore, $(V,\rho)$ is $\theta$-stable.

(\ref{item:44})\Limp(\ref{item:45}): Follows from
Proposition~\ref{pro:2}(\ref{item:24}).

(\ref{item:45})\Limp(\ref{item:43}): Suppose $f$ is a skew-Hermitian
endomorphism of $(V,\rho)$.  Then, by
Proposition~\ref{pro:2}(\ref{item:24})--(\ref{item:25}), there exists
$\lambda\in \C$ such that $f=\lambda\id{(V,\rho)}$.  Therefore, $h$ is
irreducible. \qed

\section{Families of representations}
\label{sec:famil-repr}

Fix a non-empty finite quiver $Q$.  We will consider only complex
representations of $Q$ in this section.

\subsection{Families parametrised by complex spaces}
\label{sec:famil-param-compl}

Let $T$ be a complex space.  By the \emph{Zariski topology} on $T$, we
mean the topology whose closed sets are the analytic subsets of $T$.
It is obviously coarser than the given topology on $T$, which we will
call the \emph{strong topology}.  In this context, the terms ``open'',
``continuous'', etc., without any qualifiers, are with respect to the
strong topology.  As usual, we denote the structure sheaf of $T$ by
$\struct{T}$.  If $E$ is any $\struct{T}$-module, then for any
$t\in T$, we denote by $E_t$ the $\sstalk{T}{t}$-module which is the
stalk of $E$ at $t$, and by $E(t)$ the fibre of $E$ at $t$, that is,
the $\C$-vector space $\C \otimes_{\sstalk{T}{t}} E_t$.  For any
element $\gamma$ of $E_t$, we denote the value of $\gamma$ at $t$,
that is, the canonical image of $\gamma$ in $E(t)$, by $\gamma(t)$.
If $U$ is an open neighbourhood of $t$, and $s\in E(U)$, we denote by
$s_t$ the germ of $s$ at $t$, that is, the canonical image of $s$ in
$E_t$, and by $s(t)$ the value of $s$ at $t$, that is, the
element$s_t(t)$ of $E(t)$.  If $f: E \to F$ is a morphism of
$\struct{T}$-modules, then for each $t\in T$, we have a canonical
$\sstalk{T}{t}$-linear map $f_t : E_t \to F_t$, and a canonical
$\C$-linear map $f(t) : E(t) \to F(t)$.  We will identify any
holomorphic vector bundle on $E$ with the $\struct{T}$-module of its
holomorphic sections.

A \emph{family of representations} of $Q$ parametrised by a complex
space $T$ is a pair $(V,\rho)$, where $V=(V_a)_{a\in Q_0}$ is a family
of holomorphic vector bundles on $T$, and
$\rho=(\rho_\alpha)_{\alpha\in Q_1}$ is a family of morphisms
$\rho_\alpha : V_{s(\alpha)} \to V_{t(\alpha)}$ of holomorphic vector
bundles on $T$.  There are obvious notions of a morphism
$f : (V,\rho) \to (W,\sigma)$ between two families of representations
of $Q$ parametrised by $T$, and the restriction
$\restrict{(E,\rho)}{U}$ of a family of representations of $Q$
parametrised by $T$ to an open subspace $U$ of $T$.  We thus get the
category $\Repr[T]{Q}$ of families of representations of $Q$
parametrised by $T$, and the sheaf $\HOM{(V,\rho)}{(W,\sigma)}$ of
morphisms between two families of representations of $Q$ parametrised
by $T$.  The latter is an $\struct{T}$-submodule of the
$\struct{T}$-module $\oplus_{a\in Q_0}\HOM[\struct{T}]{V_a}{W_a}$.
For every morphism $f : (V,\rho) \to (W,\sigma)$ of families of
representations of $Q$ parametrised by $T$, and open subset $U$ of
$T$, we have an obvious restriction
$\restrict{f}{U} : \restrict{(V,\rho)}{U} \to
\restrict{(W,\sigma)}{U}$.

\begin{remark}
  \label{rem:3}
  Given two families of representations $(V,\rho)$ and $(W,\sigma)$ of
  $Q$ parametrised by $T$, we define two $\struct{T}$-modules $E$ and
  $F$ by
  \begin{equation*}
    \label{eq:102}
    E = \oplus_{a\in Q_0}\HOM[\struct{T}]{V_a}{W_a}, \quad
    F = \oplus_{\alpha\in Q_1}
    \HOM[\struct{T}]{V_{s(\alpha)}}{W_{t(\alpha)}},
  \end{equation*}
  and a morphism $u: E \to F$ of $\struct{T}$-modules by
  \begin{equation*}
    \label{eq:103}
    u_U(f) =
    (f_{\tg(\alpha)} \circ \restrict{\rho_\alpha}{U} -
    \restrict{\sigma_\alpha}{U} \circ f_{\sr(\alpha)})_{\alpha\in Q_1},
  \end{equation*}
  for every open subset $U$ of $T$, and for every $f=(f_a)_{a\in Q_0}$
  in
  \begin{equation*}
    \label{eq:104}
    E(U) = \oplus_{a\in Q_0}
    \Hom{\restrict{V_a}{U}}{\restrict{W_a}{U}}.
  \end{equation*}
  By the definition of $u$, we have
  $(\Ker{u})(U) =
  \Hom{\restrict{(V,\rho)}{U}}{\restrict{(W,\sigma)}{U}}$ for every
  open subset $U$ of $T$, hence
  $\HOM{(V,\rho)}{(W,\sigma)} = \Ker{u}$.  The assumption that $Q$ is
  finite, and the fact that $V_a$ is locally free for every
  $a\in Q_0$, imply that $E$ and $F$ are locally free, hence coherent,
  $\struct{T}$-modules.  It follows that the $\struct{T}$-module
  $\HOM{(V,\rho)}{(W,\sigma)}$ is coherent.
\end{remark}

Let $f : T' \to T$ be a morphism of complex spaces, and $(V,\rho)$ a
family of representations of $Q$ parametrised by $T$.  For each
$a\in Q_0$, define a holomorphic vector bundle $M_a$ on $T'$, by
$M_a = \pullback{f}{V_a}$.  Then, for each $\alpha\in Q_1$, we have a
morphism
$\phi_\alpha = \pullback{f}{\rho_\alpha} : M_{s(\alpha)} \to
M_{t(\alpha)}$ of holomorphic vector bundles on $T'$.  We thus, get a
family $(M,\phi)$ of representations of $Q$ parametrised by $T'$.  We
call it the \emph{pullback} of $(V,\rho)$ by $f$, and will denote it
by $\pullback{f}{V,\rho}$.  In particular, if $U$ is an open subspace
of $T$, and if $f: U \to T$ is the canonical morphism of ringed
spaces, then $\pullback{f}{V,\rho}$ is canonically isomorphic to the
restriction $\restrict{(V,\rho)}{U}$.  If $A$ is any complex subspace
of $T$, and $f: A \to T$ the canonical morphism, we will call
$\pullback{f}{V,\rho}$ the \emph{restriction} of $(V,\rho)$ to $A$,
and will denote it by $\restrict{(V,\rho)}{A}$.

Suppose $(V,\rho)$ is a family of representations of $Q$ parametrised
by $T$.  Then, for each point $t\in T$, we get a representation
$(V(t),\rho(t))$ of $Q$ over $\C$, which is defined by
$V(t)=(V_a(t))_{a\in Q_0}$ and
$\rho(t)=(\rho_\alpha(t))_{\alpha\in Q_1}$.  If $P$ is any property of
representations of $Q$ over an arbitrary field, we say that $(V,\rho)$
\emph{has the property} $P$ if for every $t\in T$, the representation
$(V(t),\rho(t))$ of $Q$ over $\C$ has the property $P$.  We can thus
speak of a \emph{family of non-zero representations} of $Q$
parametrised by $T$, a \emph{family of Schur representations} of $Q$
parametrised by $T$, etc.  If $\theta$ is a weight in $\R^{Q_0}$, we
can speak of a \emph{family of $\theta$-stable representations} of $Q$
parametrised by $T$, a \emph{family of $\theta$-semistable
  representations} of $Q$ parametrised by $T$, etc.

\begin{proposition}
  \label{pro:17}
  Let $T$ be a complex space, $(V,\rho)$ and $(W,\sigma)$ two families
  of representations of $Q$ parametrised by $T$, and $u: E\to F$ the
  morphism of $\struct{T}$-modules defined in Remark~\ref{rem:3}.
  Then, for every $t\in T$, there is a canonical isomorphism
  \begin{equation*}
    \label{eq:135}
    \Hom{(V(t),\rho(t))}{(W(t),\sigma(t))} \cong \Ker{u(t)}
  \end{equation*}
  of $\C$-vector spaces.  In particular, the function
  \begin{equation*}
    \label{eq:137}
    t \mapsto
    \dim[\C]{\Hom{(V(t),\rho(t))}{(W(t),\sigma(t))}} :
    T \to \N
  \end{equation*}
  is upper semi-continuous with respect to the Zariski topology on
  $T$.
\end{proposition}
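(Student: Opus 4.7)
The plan is to deduce both assertions directly from the construction of $u$ in Remark~\ref{rem:3}, together with the standard semicontinuity of the rank of a morphism of locally free coherent sheaves.

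First I would establish the pointwise identification of kernels. Since $Q_0$ and $Q_1$ are finite and each $V_a$, $W_a$ is a holomorphic vector bundle, the $\struct{T}$-modules $E$ and $F$ are locally free of finite rank, and the formation of fibres commutes both with finite direct sums and with $\HOM$ between locally free sheaves. Hence there are canonical isomorphisms
\begin{equation*}
  E(t) \cong \bigoplus_{a\in Q_0}\Hom[\C]{V_a(t)}{W_a(t)}, \quad
  F(t) \cong \bigoplus_{\alpha\in Q_1}\Hom[\C]{V_{\sr(\alpha)}(t)}{W_{\tg(\alpha)}(t)}.
\end{equation*}
Since composition of germs of morphisms of locally free sheaves commutes with passage to fibres, it follows from the formula defining $u$ in Remark~\ref{rem:3} that, under the above identifications, $u(t)$ is the $\C$-linear map
\begin{equation*}
  (f_a)_{a\in Q_0} \mapsto
  \bigl(f_{\tg(\alpha)}\circ \rho_\alpha(t) - \sigma_\alpha(t)\circ f_{\sr(\alpha)}\bigr)_{\alpha\in Q_1}.
\end{equation*}
By the very definition of a morphism of representations in $\Repr[\C]{Q}$, the kernel of this map is precisely $\Hom{(V(t),\rho(t))}{(W(t),\sigma(t))}$, giving the desired canonical $\C$-linear isomorphism.

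Second I would obtain the semicontinuity statement. Because $E$ is locally free of finite rank, the function $t \mapsto \dim[\C]{E(t)}$ is locally constant on $T$. On any open subset over which $E$ and $F$ trivialise, $u$ is represented by a matrix with entries in $\struct{T}$, and for each $r\in\N$ the locus $\set{t\in T \suchthat \mathrm{rank}\, u(t) < r}$ is cut out by the simultaneous vanishing of all $r\times r$ minors of this matrix, and is therefore an analytic subset. Thus $t\mapsto \mathrm{rank}\, u(t)$ is lower semi-continuous in the Zariski topology, so by the rank-nullity identity
\begin{equation*}
  \dim[\C]{\Ker{u(t)}} = \dim[\C]{E(t)} - \mathrm{rank}\, u(t),
\end{equation*}
the function $t\mapsto \dim[\C]{\Ker{u(t)}}$ is upper semi-continuous with respect to the Zariski topology. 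Combined with the first part, this proves the upper semicontinuity of the Hom dimension.

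No substantial obstacle is expected here; the only point requiring care is verifying that the fibre of $u$ is given by the stated formula, which amounts to the routine fact that taking fibres commutes with composition of germs of morphisms between locally free coherent sheaves. The assumptions that $Q_0$ and $Q_1$ are finite, and that the $V_a$ and $W_a$ are locally free, are essential in order to ensure that $E$ and $F$ are locally free of finite rank, so that $\dim[\C]{E(t)}$ is locally constant and the Zariski-local matrix description of $u$ is available.
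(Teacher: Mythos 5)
Your proposal is correct and follows essentially the same route as the paper: identify the fibres $E(t)$ and $F(t)$ using local freeness, compute $u(t)$ explicitly to see that its kernel is $\Hom{(V(t),\rho(t))}{(W(t),\sigma(t))}$, and then invoke upper semicontinuity of $t\mapsto \dim[\C]{\Ker{u(t)}}$ for a morphism of locally free sheaves. The only difference is that you spell out the minor-vanishing argument for the semicontinuity, which the paper simply cites as a standard fact.
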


\proof For every point $t\in T$, we have canonical identifications
\begin{equation*}
  \label{eq:4}
  E(t) = \oplus_{a\in Q_0} \Hom[\C]{V_a(t)}{W_a(t)},
  \quad
  F(t) = \oplus_{\alpha\in Q_1}
  \Hom[\C]{V_{\sr(\alpha)}(t)}{W_{\tg(\alpha)}(t)},
\end{equation*}
since the $\struct{T}$-modules $V_a$ and $W_a$ are locally free for
every $a\in Q_0$.  Under these identifications, the $\C$-linear map
$u(t) : E(t) \to F(t)$ takes any element $f = (f_a)_{a\in Q_0}$ of
$E(t)$ to the element
$(f_{\tg(\alpha)} \circ \rho_\alpha(t) - \sigma_\alpha(t) \circ
f_{\sr(\alpha)})_{\alpha\in Q_1}$ of $F(t)$.  Therefore, we have a
canonical $\C$-isomorphism
$\Ker{u(t)} \cong \Hom{(V(t),\rho(t))}{(W(t),\sigma(t))}$.  But, as
both $E$ and $F$ are locally free, the function
$t\mapsto \dim[\C]{\Ker{u(t)}}$ from $T$ to $\N$ is upper
semi-continuous with respect to the Zariski topology on $T$.  \qed

\begin{corollary}
  \label{cor:2}
  Let $T$ be a complex space, and $(V,\rho)$ a family of
  representations of $Q$ parametrised by $T$.  Then, the subset
  \begin{equation*}
    \label{eq:144}
    \set{t\in T \suchthat
      \dim[\C]{\End{V(t),\rho(t)}} \neq 1}.
  \end{equation*}
  of $T$ is analytic.  
\end{corollary}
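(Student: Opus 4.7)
The plan is to apply Proposition~\ref{pro:17} with $(W,\sigma) = (V,\rho)$, which will yield at once that the function
\[
\phi : t \mapsto \dim[\C]{\End{V(t),\rho(t)}}
\]
is upper semi-continuous on $T$ for the Zariski topology. In particular, $A = \set{t \in T \suchthat \phi(t) \geq 2}$ is Zariski-closed, and therefore an analytic subset of $T$.

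I would then decompose the desired locus as
\[
\set{t \in T \suchthat \phi(t) \neq 1} = A \cup B,
\quad
B = \set{t \in T \suchthat \phi(t) = 0}.
\]
The point to watch out for is that Proposition~\ref{pro:17} alone controls $A$ but not the zero-dimensional stratum $B$, so $B$ must be handled separately. Since $\id{(V(t),\rho(t))}$ is a non-zero endomorphism whenever the representation $(V(t),\rho(t))$ is non-zero, the set $B$ coincides with the vanishing locus of the whole family, namely $\bigcap_{a \in Q_0} \set{t \in T \suchthat V_a(t) = 0}$.

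To finish, I will use that each $V_a$ is a holomorphic vector bundle, so the rank function $t \mapsto \dim[\C]{V_a(t)}$ is locally constant on $T$. Thus each set $\set{t \in T \suchthat V_a(t) = 0}$ is a union of connected components of $T$, hence clopen, and in particular analytic (locally the zero set of either the zero function or the constant function~$1$). Intersecting over the finite set $Q_0$ shows that $B$ is analytic, and the finite union $A \cup B$ is then the required analytic subset of $T$.
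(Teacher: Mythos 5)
Your proposal is correct and follows the route the paper intends: Corollary~\ref{cor:2} is stated without proof as an immediate consequence of Proposition~\ref{pro:17} applied with $(W,\sigma)=(V,\rho)$, upper semi-continuity for the Zariski topology giving that $\set{t\in T \suchthat \dim[\C]{\End{V(t),\rho(t)}}\geq 2}$ is Zariski-closed, hence analytic. Your additional treatment of the stratum $\set{t \suchthat \dim[\C]{\End{V(t),\rho(t)}}=0}$, which is exactly the locus where all the bundles $V_a$ have rank zero and is therefore clopen and analytic by local constancy of the rank, fills in a corner case that the paper leaves implicit, and the conclusion follows since a finite union of analytic subsets is analytic.
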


\begin{corollary}
  \label{cor:5}
  Let $T$ be a complex space, and $(V,\rho)$ and $(W,\sigma)$ two
  families of representations of $Q$ parametrised by $T$.  Suppose
  that $T$ is reduced, and that the function
  \begin{equation*}
    \label{eq:165}
    t\mapsto
    \dim[\C]{\Hom{(V(t),\rho(t))}{(W(t),\sigma(t))}} :
    T\to \N
  \end{equation*}
  is locally constant.  Then, the $\struct{T}$-module
  $\HOM{(V,\rho)}{(W,\sigma)}$ is locally free.  Moreover, for every
  $t\in T$, there is a canonical $\C$-isomorphism
  \begin{equation*}
    \label{eq:166}
    (\HOM{(V,\rho)}{(W,\sigma)})(t) \cong
    \Hom{(V(t),\rho(t))}{(W(t),\sigma(t))}.
  \end{equation*}
\end{corollary}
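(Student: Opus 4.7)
The plan is to combine Remark~\ref{rem:3} and Proposition~\ref{pro:17} to reduce the statement to the following standard fact: if $u : E \to F$ is a morphism of locally free coherent $\struct{T}$-modules on a reduced complex space $T$, and the function $t \mapsto \dim[\C]{\Ker{u(t)}}$ is locally constant, then $\Ker{u}$ is locally free and its formation commutes with taking fibres.

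First, I would invoke Remark~\ref{rem:3} to write $\HOM{(V,\rho)}{(W,\sigma)} = \Ker{u}$ for a canonical morphism $u : E \to F$ between locally free coherent $\struct{T}$-modules, and Proposition~\ref{pro:17} to identify $\Ker{u(t)}$ with $\Hom{(V(t),\rho(t))}{(W(t),\sigma(t))}$ for each $t$.  Since $E$ is locally free, local constancy of $\dim[\C]{\Hom{(V(t),\rho(t))}{(W(t),\sigma(t))}}$ is equivalent to local constancy of $\rk{u(t)}$.

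Next, near a fixed point $t_0 \in T$, I would choose local trivialisations of $E$ and $F$ so that the matrix of $u(t_0)$ has the block form $\begin{pmatrix} I_r & 0 \\ 0 & 0 \end{pmatrix}$, with $r = \rk{u(t_0)}$; shrinking the neighbourhood, the top-left block $A(t)$ of the local matrix of $u$ stays invertible.  Applying the standard Schur-complement trick, I would perform two holomorphic block-unipotent changes of basis to reduce $u$ to the block-diagonal form $\begin{pmatrix} A & 0 \\ 0 & S \end{pmatrix}$ with $S = D - C A^{-1} B$.  By hypothesis $\rk{u(t)} = r$, so $\rk{S(t)} = 0$ for every $t$ in the neighbourhood; this is where the reducedness of $T$ is essential, letting me pass from the pointwise vanishing of $S$ to $S \equiv 0$ as a morphism of $\struct{T}$-modules.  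The resulting normal form $\begin{pmatrix} A & 0 \\ 0 & 0 \end{pmatrix}$ exhibits $\Ker{u}$ locally as a trivial sub-bundle of $E$, so $\Ker{u}$ is locally free and a direct summand of $E$.  Consequently $(\Ker{u})(t) \to E(t)$ is injective, and a dimension count against $\Ker{u(t)}$ shows that the natural map $(\Ker{u})(t) \to \Ker{u(t)}$ is an isomorphism, which combined with Proposition~\ref{pro:17} yields the asserted canonical $\C$-isomorphism.

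The main obstacle, and the only non-formal ingredient of the argument, is the reducedness step: a holomorphic section of a locally free sheaf whose value at every point is zero is identically zero on a reduced complex space, and this is exactly what allows the fibrewise linear-algebra normal form to propagate to the sheaf level.
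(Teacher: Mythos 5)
Your proposal follows the paper's own proof exactly: both reduce via Remark~\ref{rem:3} and Proposition~\ref{pro:17} to the statement that the kernel of a morphism $u:E\to F$ of locally free sheaves on a reduced complex space is locally free and commutes with fibres when $t\mapsto\dim[\C]{\Ker{u(t)}}$ is locally constant. The only difference is that the paper simply cites this as a known consequence of local freeness and reducedness, whereas you supply the Schur-complement normal-form argument for it; that argument is correct, and your identification of reducedness as the step allowing pointwise vanishing of $S$ to imply $S=0$ is exactly right.
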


\proof Let $u: E\to F$ be the morphism of $\struct{T}$-modules defined
in Remark~\ref{rem:3}.  Then, $\HOM{(V,\rho)}{(W,\sigma)} = \Ker{u}$.
Moreover, by Proposition~\ref{pro:17}, for every $t\in T$, the
$\C$-vector spaces $\Hom{(V(t),\rho(t))}{(W(t),\sigma(t))}$ and
$\Ker{u(t)}$ are canonically isomorphic.  Therefore, the function
$t\mapsto \dim[\C]{\Ker{u(t)}}$ from $T$ to $\N$ is locally constant.
Since the $\struct{T}$-modules $E$ and $F$ are locally free, and $T$
is reduced, this implies that the $\struct{T}$-module $\Ker{u}$ is
locally free, and that for every $t\in T$, there is a canonical
$\C$-isomorphism $(\Ker{u})(t) \cong \Ker{u(t)}$.  \qed

\begin{corollary}
  \label{cor:6}
  Let $(V,\rho)$ be a family of representations of $Q$ parametrised by a
  complex space $T$.  Then, the subset of $T$, consisting of all the
  points $t\in T$ such that the representation $(V(t),\rho(t))$ of $Q$
  over $\C$ is Schur, is open with respect to the Zariski topology on
  $T$.
\end{corollary}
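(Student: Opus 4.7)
The plan is to reduce this directly to Corollary~\ref{cor:2}, using the characterization of Schur objects in an algebraically closed setting. First I would recall that for a finite-dimensional representation $(V(t),\rho(t))$ of $Q$ over $\C$, the endomorphism algebra $\End{V(t),\rho(t)}$ is a finite-dimensional $\C$-algebra containing $\C\cdot\id{(V(t),\rho(t))}$. By Proposition~\ref{pro:2}(\ref{item:25}) applied to the abelian $\C$-linear category $\Repr[\C]{Q}$ (which is $\C$-linear and has finite-dimensional Hom spaces between objects with finite-dimensional vertex spaces), the representation $(V(t),\rho(t))$ is Schur if and only if every endomorphism is a scalar multiple of the identity, that is, if and only if $\dim[\C]{\End{V(t),\rho(t)}} = 1$.

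Since $\id{(V(t),\rho(t))}$ is always a non-zero endomorphism, the dimension $\dim[\C]{\End{V(t),\rho(t)}}$ is always at least $1$, so the condition $\dim[\C]{\End{V(t),\rho(t)}}=1$ is precisely the negation of the condition $\dim[\C]{\End{V(t),\rho(t)}}\neq 1$. Therefore the Schur locus
\begin{equation*}
  \set{t\in T \suchthat (V(t),\rho(t))~\text{is Schur}}
\end{equation*}
is exactly the complement in $T$ of the set
\begin{equation*}
  \set{t\in T \suchthat \dim[\C]{\End{V(t),\rho(t)}} \neq 1}.
\end{equation*}

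By Corollary~\ref{cor:2}, the latter set is an analytic subset of $T$. By definition of the Zariski topology on $T$, analytic subsets are exactly the Zariski-closed subsets, so its complement, the Schur locus, is Zariski-open. No obstacle is expected; this is a short deduction from Corollary~\ref{cor:2} together with the Schur-implies-scalar characterization from Proposition~\ref{pro:2}, and the only thing to be careful about is invoking the algebraic closedness of $\C$ (implicitly through Proposition~\ref{pro:2}(\ref{item:25})) to conclude that Schur is the same as $\dim[\C]{\End{\cdot}}=1$.
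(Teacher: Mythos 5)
Your proposal is correct and follows exactly the paper's own argument: identify the Schur locus with the set where $\dim[\C]{\End{V(t),\rho(t)}}=1$ via Proposition~\ref{pro:2}(\ref{item:25}), then apply Corollary~\ref{cor:2} to conclude that the complement is analytic, hence Zariski closed. Nothing is missing.
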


\proof Let $\schur{T}$ denote the said subset of $T$.  By
Proposition~\ref{pro:2}(\ref{item:25}), $\schur{T}$ equals the set of
all the points $t\in T$, such that $\dim[\C]{\End{V(t),\rho(t)}}=1$.
Therefore, by Corollary~\ref{cor:2}, $T\setminus \schur{T}$ is an
analytic subset of $T$, and is hence Zariski closed.  \qed

\begin{corollary}
  \label{cor:7}
  Let $f_1 : S \to T_1$ and $f_2 : S \to T_2$ be morphisms of complex
  analytic spaces, $(V,\rho)$ a family of representations of $Q$
  parametrised by $T_1$, and $(W,\sigma)$ a family of representations
  of $Q$ parametrised by $T_2$.  Then, the function
  \begin{equation*}
    \label{eq:172}
    s \mapsto
    \dim[\C]{%
      \Hom{(V(f_1(s)),\rho(f_1(s)))}{(W(f_2(s)),\sigma(f_2(s)))}} :
    S \to \N
  \end{equation*}
  is upper semi-continuous with respect to the Zariski topology on
  $S$.
\end{corollary}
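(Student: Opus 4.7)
The plan is to reduce the claim to Proposition~\ref{pro:17} (the single-parameter case) by pulling both families back to a common parameter space, namely the product $T_1 \times T_2$. Specifically, I would form the holomorphic map $g = (f_1, f_2) : S \to T_1 \times T_2$, and let $p_1 : T_1 \times T_2 \to T_1$ and $p_2 : T_1 \times T_2 \to T_2$ denote the two canonical projections.

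Next, I would set $(V',\rho') = \pullback{p_1}{V,\rho}$ and $(W',\sigma') = \pullback{p_2}{W,\sigma}$, which are families of representations of $Q$ parametrised by $T_1 \times T_2$. For any point $t = (t_1,t_2) \in T_1 \times T_2$, the fibres of these pullbacks satisfy canonical isomorphisms $(V'(t),\rho'(t)) \cong (V(t_1),\rho(t_1))$ and $(W'(t),\sigma'(t)) \cong (W(t_2),\sigma(t_2))$, which is a routine consequence of the definition of pullback and the fact that pullback commutes with taking fibres for locally free sheaves. Applying Proposition~\ref{pro:17} to $(V',\rho')$ and $(W',\sigma')$, the function
\begin{equation*}
  (t_1,t_2) \mapsto
  \dim[\C]{\Hom{(V(t_1),\rho(t_1))}{(W(t_2),\sigma(t_2))}}
\end{equation*}
from $T_1 \times T_2$ to $\N$ is upper semi-continuous with respect to the Zariski topology on $T_1 \times T_2$.

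Finally, I would observe that the function in the statement of the corollary is the composition of this function with the holomorphic map $g : S \to T_1 \times T_2$. Since the preimage of an analytic subset of a complex space under a holomorphic map is analytic, $g$ is continuous for the Zariski topologies on $S$ and $T_1 \times T_2$. Upper semi-continuity with respect to the Zariski topology is preserved under such a continuous pullback, so the desired conclusion follows.

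The only substantive point is verifying that forming fibres commutes with pullback, which is immediate here because all the sheaves involved are locally free; beyond this, the argument is a purely formal reduction, and no step constitutes a real obstacle.
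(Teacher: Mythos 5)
Your proof is correct and rests on the same reduction to Proposition~\ref{pro:17} via pullbacks of the two families; the paper takes a slightly more direct route by pulling $(V,\rho)$ and $(W,\sigma)$ back to $S$ itself along $f_1$ and $f_2$, which makes the detour through $T_1\times T_2$ and the appeal to Zariski-continuity of $g=(f_1,f_2)$ unnecessary.
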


\proof Let $(M,\phi) = \pullback{f_1}{V,\rho}$ and
$(N,\psi) = \pullback{f_2}{W,\sigma}$.  Then, for every $s\in S$, we
have canonical isomorphisms
\begin{equation*}
  \label{eq:173}
  (M(s),\phi(s)) \cong (V(f_1(s)),\rho(f_1(s))), \quad
  (N(s),\psi(s)) \cong (W(f_2(s)),\sigma(f_2(s)))
\end{equation*}
of representations of $Q$ over $\C$, hence we get a canonical
$\C$-isomorphism
\begin{equation*}
  \label{eq:174}
  \Hom{(M(s),\phi(s))}{(N(s),\psi(s))} \cong
  \Hom{(V(f_1(s)),\rho(f_1(s)))}{(W(f_2(s)),\sigma(f_2(s)))}.
\end{equation*}
The Corollary now follows from Proposition~\ref{pro:17}.  \qed

\subsection{The Hausdorff property}
\label{sec:hausdorff-property}

Let $R$ be an equivalence relation on a topological space $T$.  We say
that two points $t_1$ and $t_2$ in $T$ are \emph{separated} with
respect to $R$ if there exist an open neighbourhood $U_1$ of $t_1$,
and an open neighbourhood $U_2$ of $t_2$, in $T$, such that
$U_1\cap U_2=\emptyset$, and both $U_1$ and $U_2$ are saturated with
respect to $R$.  This is equivalent to the condition that there exist
an open neighbourhood $U_1'$ of $\pi(t_1)$, and an open neighbourhood
$U_2'$ of $\pi(t_2)$, in $T'$, such that $U_1'\cap U_2'=\emptyset$,
where $T'=T/R$ is the quotient topological space of $T$ by $R$,
$\pi : T \to T'$ the canonical projection.  We say that $R$ is
\emph{open} if the saturation with respect to $R$ of any open subset
of $T$ is open in $T$.  This is equivalent to the condition that
$\pi: T \to T'$ is an open map.

\begin{remark}
  \label{rem:6}
  It is easy to verify the following facts:
  \begin{enumerate}
  \item \label{item:58} The closure of $R$ in $T\times T$ equals the
    set of all points $(t_1,t_2)$ in $T\times T$ such that $t_1$ and
    $t_2$ are not separated with respect to $R$.
  \item \label{item:59} The quotient topological space $T'=T/R$ is
    Hausdorff if and only if $R$ is closed in $T\times T$.
  \end{enumerate}
\end{remark}

Let $T$ be a complex space, and $(V,\rho)$ a family of representations
of $Q$ parametrised by $T$.  Define a relation $R$ on $T$ by setting
$t_1R t_2$ if the representations $(V(t_1),\rho(t_1))$ and
$(V(t_2),\rho(t_2))$ of $Q$ over $\C$ are isomorphic.  This is an
equivalence relation on $T$.  We will call it the equivalence relation
on $T$ \emph{induced} by $(V,\rho)$.

\begin{lemma}
  \label{lem:1}
  Let $T$ be a complex space, $(V,\rho)$ a family of non-zero
  representations of $Q$ parametrised by $T$, and $R$ the equivalence
  relation on $T$ induced by $(V,\rho)$.  Let $Z$ denote the closure
  of $R$ with respect to the Zariski topology on the product complex
  space $T\times T$.  Then, for every point $(t_1,t_2) \in Z$, there
  exist non-zero morphisms
  \begin{equation*}
    \label{eq:207}
    f : (V(t_1),\rho(t_1)) \to (V(t_2),\rho(t_2)), \quad
    g : (V(t_2),\rho(t_2)) \to (V(t_1),\rho(t_1))
  \end{equation*}
  of representations of $Q$ over $\C$.
\end{lemma}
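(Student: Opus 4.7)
The plan is to use upper semi-continuity of the Hom-dimension function, together with the fact that on $R$ itself the Hom spaces contain isomorphisms and are therefore at least one-dimensional.

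First, I would set up the two pullback families on the product space $T\times T$. Let $\pi_1,\pi_2 : T\times T \to T$ be the canonical projections, and consider the families of representations $\pi_1^{*}(V,\rho)$ and $\pi_2^{*}(V,\rho)$ on $T\times T$. By Corollary~\ref{cor:7} (applied with $S=T\times T$, $f_1=\pi_1$, $f_2=\pi_2$), the function
\begin{equation*}
  h : T\times T \to \N, \quad
  h(t_1,t_2) = \dim[\C]{\Hom{(V(t_1),\rho(t_1))}{(V(t_2),\rho(t_2))}}
\end{equation*}
is upper semi-continuous with respect to the Zariski topology on $T\times T$. Swapping the roles of the projections, the analogous function
\begin{equation*}
  h'(t_1,t_2) = \dim[\C]{\Hom{(V(t_2),\rho(t_2))}{(V(t_1),\rho(t_1))}}
\end{equation*}
is also Zariski upper semi-continuous.

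Next I would observe that for every $(t_1,t_2)\in R$, the representations $(V(t_1),\rho(t_1))$ and $(V(t_2),\rho(t_2))$ are isomorphic, so both Hom spaces contain an isomorphism; since the family consists of non-zero representations, this isomorphism is itself non-zero, giving $h(t_1,t_2)\geq 1$ and $h'(t_1,t_2)\geq 1$. Hence
\begin{equation*}
  R \subset \set{(t_1,t_2)\in T\times T \suchthat h(t_1,t_2)\geq 1},
\end{equation*}
and similarly for $h'$. By upper semi-continuity, the sets $\set{h\geq 1}$ and $\set{h'\geq 1}$ are Zariski closed in $T\times T$. Therefore the Zariski closure $Z$ of $R$ is contained in the intersection of these two sets.

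Finally, for any $(t_1,t_2)\in Z$, we obtain $h(t_1,t_2)\geq 1$ and $h'(t_1,t_2)\geq 1$, which is exactly the assertion that there exist non-zero morphisms $f : (V(t_1),\rho(t_1)) \to (V(t_2),\rho(t_2))$ and $g : (V(t_2),\rho(t_2)) \to (V(t_1),\rho(t_1))$. The only subtle point is to ensure that the upper semi-continuity result from Proposition~\ref{pro:17}/Corollary~\ref{cor:7} is applied correctly in both directions on $T\times T$, but this is immediate by symmetry of the construction in Remark~\ref{rem:3}; there is no serious obstacle.
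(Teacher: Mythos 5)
Your proof is correct and follows essentially the same route as the paper: both arguments apply Corollary~\ref{cor:7} to the two projections of $T\times T$ to conclude that the loci where the two Hom spaces are non-zero are Zariski closed, observe that $R$ is contained in their intersection (using the non-vanishing of the representations to guarantee the isomorphisms are non-zero morphisms), and deduce that $Z$ is contained there as well. The only cosmetic difference is that you phrase the closedness via the sets $\set{h\geq 1}$ and $\set{h'\geq 1}$, whereas the paper names these sets $A_1$ and $A_2$ directly.
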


\proof Let
\begin{equation*}
  \label{eq:208}
  A_1 =
  \set{(t_1,t_2) \in T\times T \suchthat
    \Hom{(V(t_1),\rho(t_1))}{(V(t_2),\rho(t_2))} \neq 0},
\end{equation*}
and
\begin{equation*}
  \label{eq:209}
  A_2 =
  \set{(t_1,t_2) \in T\times T \suchthat
    \Hom{(V(t_2),\rho(t_2))}{(V(t_1),\rho(t_1))} \neq 0}.
\end{equation*}
Then, it is obvious that $R$ is a subset of $A_1\cap A_2$.  On the
other hand, by Corollary~\ref{cor:7}, $A_1$ and $A_2$ are closed in
the Zariski topology on $T\times T$, and hence so is $A_1\cap A_2$.
It follows that $Z\subset A_1\cap A_2$.  \qed

\begin{proposition}
  \label{pro:26}
  Let $T$ be a complex space, $\theta: T \to \R^{Q_0}$ a continuous
  function, and $(V,\rho)$ a family of representations of $Q$
  parametrised by $T$.  Suppose that the equivalence relation $R$ on
  $T$ induced by $(V,\rho)$ is open, that the representation
  $(V(t),\rho(t))$ over $\C$ is $\theta(t)$-stable for every $t\in T$,
  and that the function $\theta$ is $R$-invariant.  Then, the quotient
  topological space $T/R$ is Hausdorff.
\end{proposition}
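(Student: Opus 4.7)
The plan is to use Remark~\ref{rem:6}(\ref{item:59}): since $R$ is already assumed to be open, it suffices to show that $R$ is closed in $T\times T$ (with respect to the strong topology) to conclude that $T/R$ is Hausdorff. So I would fix a point $(t_1,t_2)$ in the strong closure $\overline{R}$ of $R$ and try to show $(t_1,t_2)\in R$, i.e., that the representations $(V(t_1),\rho(t_1))$ and $(V(t_2),\rho(t_2))$ are isomorphic.

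Since the Zariski topology on the complex space $T\times T$ is coarser than the strong topology, the Zariski closure $Z$ of $R$ is strongly closed, hence contains $\overline{R}$; in particular $(t_1,t_2)\in Z$. Because each $(V(t),\rho(t))$ is $\theta(t)$-stable and therefore non-zero, $(V,\rho)$ is a family of non-zero representations, so Lemma~\ref{lem:1} applies and furnishes non-zero morphisms
\[
 f:(V(t_1),\rho(t_1))\to (V(t_2),\rho(t_2)),\qquad
 g:(V(t_2),\rho(t_2))\to (V(t_1),\rho(t_1)).
\]

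Next I would show that $\theta(t_1)=\theta(t_2)$. The map $(s_1,s_2)\mapsto \theta(s_1)-\theta(s_2)$ from $T\times T$ to $\R^{Q_0}$ is continuous, and vanishes on $R$ by the $R$-invariance of $\theta$; hence it vanishes on the strong closure $\overline{R}$ as well. Writing $\theta_0=\theta(t_1)=\theta(t_2)$, both $M=(V(t_1),\rho(t_1))$ and $N=(V(t_2),\rho(t_2))$ are thus $\theta_0$-stable objects of $\Repr[\C]{Q}$. Comparing $\theta_0$-slopes, either $\mu_{\theta_0}(M)\geq \mu_{\theta_0}(N)$ or $\mu_{\theta_0}(N)\geq \mu_{\theta_0}(M)$. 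In the first case I apply Proposition~\ref{pro:2}(\ref{item:18}) to the non-zero morphism $f$ to conclude that $f$ is an isomorphism; in the second, I apply it to $g$. Either way $M\cong N$, so $(t_1,t_2)\in R$, and $R$ is closed.

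The crux — and the role each hypothesis plays — lies in the passage from the formal existence of morphisms in both directions, supplied by Lemma~\ref{lem:1}, to the assertion that one of them is an isomorphism. That step needs fibrewise stability, in order to invoke a Schur-type argument; it needs the two fibres to be stable with respect to the \emph{same} weight, which is exactly what the $R$-invariance of $\theta$ gives after one passes to the closure using continuity; and it needs openness of $R$, to upgrade closedness of $R$ to Hausdorffness of $T/R$ via Remark~\ref{rem:6}(\ref{item:59}). The main subtlety is that Lemma~\ref{lem:1} is phrased relative to the Zariski closure, so it is important that the strong closure of $R$ is contained in its Zariski closure; everything else is then a direct assembly of the results already in place.
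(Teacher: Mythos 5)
Your proof is correct and follows the same overall route as the paper: reduce to strong closedness of $R$ via Remark~\ref{rem:6}(\ref{item:59}), pass to the Zariski closure to invoke Lemma~\ref{lem:1}, use continuity and $R$-invariance to see that the relevant data agree at $(t_1,t_2)$, and finish with the Schur-type rigidity of Proposition~\ref{pro:2}. The one genuine divergence is in the last step. The paper uses only the single morphism $f$ from Lemma~\ref{lem:1}, and to apply Proposition~\ref{pro:2}(\ref{item:22}) to it must first establish $\mu_{\theta_0}(M)=\mu_{\theta_0}(N)$; it does so by showing that the full pair $(\dim{V(t),\rho(t)},\theta(t))$ is continuous and $R$-invariant (using that the rank functions of the bundles $V_a$ are locally constant), so that the dimension vectors, and hence the slopes, coincide on the closure. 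You instead exploit both morphisms $f$ and $g$ supplied by Lemma~\ref{lem:1} together with the totality of the slope preorder: whichever of $\mu_{\theta_0}(M)\geq\mu_{\theta_0}(N)$ or $\mu_{\theta_0}(N)\geq\mu_{\theta_0}(M)$ holds, one of the two non-zero morphisms between stable objects is forced to be an isomorphism. This is slightly more economical, since you only need to propagate the weight $\theta$ (not the dimension vector) to the closure, and you correctly note in passing that fibrewise stability guarantees the family is non-zero, which is the hypothesis Lemma~\ref{lem:1} actually requires.
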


\proof By Remark~\ref{rem:6}(\ref{item:59}), it suffices to prove that
$R$ is strongly closed in $T\times T$.  Let $(t_1,t_2)$ be any point
in the strong closure $F$ of $R$ in $T\times T$.  Obviously, $F$ is
contained in the Zariski closure of $R$ in $T\times T$, hence, by
Lemma~\ref{lem:1}, there exists a non-zero morphism
$f : (V(t_1),\rho(t_1)) \to (V(t_2),\rho(t_2))$ of representations of
$Q$ over $\C$.  Now, for every $a\in Q_0$, the rank function
$t\mapsto \dim[\C]{V_a(t)} : T \to \N$ of the vector bundle $V_a$ is
locally constant.  Therefore, as $\theta$ is continuous, the function
$\phi : T \to \N^{Q_0} \times \R^{Q_0}$, which is defined by
$\phi(t) = (\dim{V(t),\rho(t)},\theta(t))$, is continuous.  Thus, the
set
\begin{equation*}
  \label{eq:217}
  G =
  \set{(t,t')\in T\times T \suchthat \phi(t) = \phi(t')}
\end{equation*}
is strongly closed in $T\times T$.  As $\theta$ is $R$-invariant,
$R\subset G$.  Therefore, $F\subset G$, hence $\phi(t_1)=\phi(t_2)$.
This implies that
$\mu_\theta(V(t_1),\rho(t_1)) = \mu_\theta(V(t_2),\rho(t_2))$, where
$\theta = \theta(t_1) =\theta(t_2)$.  Since $(V(t),\rho(t))$ is
$\theta(t)$-stable for all $t\in T$, by
Proposition~\ref{pro:2}(\ref{item:22}), we see that $f$ is an
isomorphism.  Therefore, $(t_1,t_2)\in R$.  This proves that $R$ is
strongly closed in $T\times T$.  \qed

\section{The moduli space of Schur representations}
\label{sec:moduli-space-schur}

\subsection{Quotient premanifolds}
\label{sec:quot-prem}

By a \emph{complex premanifold}, we mean a complex manifold without
any separation or countability conditions, that is, a topological
space with a maximal holomorphic atlas.  We use the term \emph{complex
  manifold} for a complex premanifold whose underlying topological
space is Hausdorff.

Let $R$ be an equivalence relation on a complex premanifold $X$, $Y$
the quotient topological space $X/R$, and $p: X \to Y$ the canonical
projection.  It is a theorem of Godement that the following statements
are equivalent:
\begin{enumerate}
\item \label{item:62} There exists a structure of a complex
  premanifold on $Y$ with the property that $p$ is a holomorphic
  submersion.
\item \label{item:63} The relation $R$ is a subpremanifold of
  $X\times X$, and the restricted projection $\pr{1} : R \to X$ is a
  submersion.
\end{enumerate}
Moreover, in that case, such a complex premanifold structure on $Y$ is
unique \cite[Part~II, Chapter~III, \S~12, Theorems 1--2]{SLG}.

We will use the above theorem in the context of group actions.  Let
$X$ be a topological space, and $G$ a topological group.  Suppose that
we are given a continuous right action of $G$ on $X$.  Let $R$ denote
the equivalence relation on $X$ defined by this action, and
$\tau : X\times G \to R$ the map $(x,g) \mapsto (x,xg)$.  For any two
subsets $A$ and $B$ of $X$, let
\begin{equation*}
  \label{eq:219}
  P_G(A,B) =
  \set{g\in G \suchthat Ag\cap B \neq \emptyset}.
\end{equation*}
If the action of $G$ on $X$ is free, then for every $(x,y)\in R$,
there exists a unique element $\phi(x,y)$ of $G$, such that
$y=x\phi(x,y)$; we thus get a map $\phi : R \to G$, which is called
the \emph{translation} map of the given action.  We say that the
action of $G$ on $X$ is \emph{principal} if it is free, and its
translation map is continuous.

\begin{remark}
  \label{rem:7}
  It is easy to verify the following assertions:
  \begin{enumerate}
  \item \label{item:64} The action of $G$ on $X$ is free if and only
    if the map $\tau$ is injective.
  \item \label{item:65} The following statements are equivalent:
    \begin{enumerate}
    \item \label{item:66} The action of $G$ on $X$ is principal.
    \item \label{item:67} The action of $G$ on $X$ is free, and its
      translation map is continuous at $(x,x)$ for all $x\in X$.
    \item \label{item:68} The action of $G$ on $X$ is free, and for
      every point $x\in X$, and for every neighbourhood $V$ of the
      identity element $e$ of $G$, there exists a neighbourhood $U$ of
      $x$ in $X$, such that $P_G(U,U)\subset V$.
    \item \label{item:69} The map $\tau$ is a homeomorphism.
    \end{enumerate}
  \end{enumerate}
\end{remark}

\begin{lemma}
  \label{lem:8}
  Let $X$ be a complex premanifold, and $G$ a complex Lie group.
  Suppose that we are given a principal holomorphic right action of
  $G$ on $X$.  Let $R$ be the equivalence relation on $X$ defined by
  the action of $G$, and $\tau: X \times G \to R$ the map
  $(x,g)\mapsto (x,xg)$.  Then, $R$ is a complex subpremanifold of
  $X\times X$, and $\tau$ is a biholomorphism.
\end{lemma}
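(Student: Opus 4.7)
The plan is to exhibit $\tau$ as an injective holomorphic immersion that is simultaneously a topological embedding; any such map is automatically a biholomorphism onto a complex subpremanifold of the target, which gives both conclusions of the lemma in one stroke.

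To begin, I would note that $\tau = (\pr{1}, \alpha)$, where $\alpha : X\times G \to X$, $(x,g)\mapsto xg$, is the holomorphic action map, so $\tau$ is holomorphic and its image is precisely $R$. Remark~\ref{rem:7} already asserts that the principality of the action is equivalent to $\tau$ being a homeomorphism onto $R$ equipped with the subspace topology inherited from $X\times X$. Thus the only non-trivial point is to verify that $\tau$ is a holomorphic immersion everywhere.

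A direct computation of the differential gives $d\tau|_{(x,g)}(v,w) = (v,\, d\alpha|_{(x,g)}(v,w))$, which can vanish only when $v = 0$ and $d\lambda_x|_g(w) = 0$, where $\lambda_x : G\to X$, $h\mapsto xh$, is the orbit map through $x$. So it suffices to show that every orbit map is an immersion. Let $R_g : G\to G$ denote right translation, and $\mu_g : X \to X$ the biholomorphism $y\mapsto yg$ (with holomorphic inverse $\mu_{g^{-1}}$, since the action is holomorphic). The identity $\lambda_x\circ R_g = \mu_g\circ\lambda_x$ and the chain rule together yield $d\lambda_x|_g = d\mu_g|_x \circ d\lambda_x|_e \circ (dR_g|_e)^{-1}$, so $d\lambda_x$ has constant rank $r$ on $G$. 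Freeness of the action makes $\lambda_x$ injective; were $r < \dim{G}$, the holomorphic constant rank theorem would place $\lambda_x$ in local coordinates as $(z_1,\dotsc,z_n)\mapsto (z_1,\dotsc,z_r,0,\dotsc,0)$, contradicting injectivity. Hence $r = \dim{G}$ and $\lambda_x$ is an immersion.

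The main obstacle is exactly this constant-rank argument, which is the step in which the \emph{holomorphic} (as opposed to merely continuous) nature of the action is essential: principality alone, as formulated in Remark~\ref{rem:7}, provides no more than a continuous inverse to $\tau$, whereas what is required is an injective holomorphic immersion. Once that is in hand, $\tau$ is an injective holomorphic immersion that is also a topological embedding, and a standard implicit-function-theorem argument, choosing local holomorphic coordinates on $X\times X$ adapted to the image of $\tau$, shows that $R$ is a complex subpremanifold of $X\times X$ and that $\tau : X\times G \to R$ is a biholomorphism.
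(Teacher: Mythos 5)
Your proof is correct and takes essentially the same route as the paper's: both write the map as the holomorphic $\sigma = (\pr{1},\alpha)$, invoke Remark~\ref{rem:7} to get that $\tau$ is a homeomorphism onto $R$, deduce from freeness that $\sigma$ is an immersion, and conclude that a holomorphic embedding has a complex subpremanifold as image with $\tau$ a biholomorphism. The only difference is that you spell out the constant-rank argument behind ``freeness implies the orbit maps are immersions,'' a step the paper asserts without proof.
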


\proof Let $\sigma : X\times G \to X\times X$ be the map
$(x,g)\mapsto (x,xg)$.  Thus, $\sigma(X\times G)=R$, and
$\tau : X \times G \to R$ is the map induced by $\sigma$.  Since the
action of $G$ on $X$ is principal, by Remark~\ref{rem:7}, the map
$\tau$ is a homeomorphism.  The map $\sigma$ is obviously holomorphic.
As the action of $G$ on $X$ is free, $\sigma$ is an immersion.
Therefore, $\sigma$ is a holomorphic embedding, its image $R$ is a
complex subpremanifold of $X\times X$, and $\tau$ is a biholomorphism.
\qed

\begin{lemma}
  \label{lem:10}
  Let $p : X \to Y$ be a surjective holomorphic submersion of complex
  premanifolds, and $G$ a complex Lie group.  Suppose that we are
  given a principal holomorphic right action of $G$ on $X$, such that
  $p^{-1}(p(x)) = xG$ for all $x\in X$.  Then, this action makes $p$ a
  holomorphic principal $G$-bundle.
\end{lemma}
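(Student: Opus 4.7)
The plan is to produce, around every point of $Y$, a local trivialization of $p$ as a holomorphic principal $G$-bundle, using two ingredients: holomorphic local sections of the submersion $p$, and the holomorphicity of the translation map of the principal $G$-action established in Lemma~\ref{lem:8}.

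First, I would fix $y_0\in Y$, pick $x_0\in \invimage{p}{y_0}$ (possible because $p$ is surjective), and invoke the fact that a holomorphic submersion admits local holomorphic sections through any point of its source: in suitable holomorphic charts near $x_0$ the map $p$ has the form of a linear projection, so there is a holomorphic section $s : U \to X$ of $p$ on some open neighbourhood $U$ of $y_0$ with $s(y_0)=x_0$.

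Second, I would define the candidate trivialization $\psi : U\times G \to \invimage{p}{U}$ by $\psi(u,g) = s(u)\cdot g$. This map is holomorphic, is $G$-equivariant with $G$ acting by right translation on the second factor, and satisfies $p\circ\psi = \pr{1}$. The hypothesis $\invimage{p}{p(x)} = xG$, together with the freeness of the action, forces $\psi$ to be a bijection: for every $x\in \invimage{p}{U}$ the point $s(p(x))$ lies in the same orbit as $x$, so there is a unique $g\in G$ with $x = s(p(x))\cdot g$.

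Third, I would show that $\psi^{-1}$ is holomorphic. By Lemma~\ref{lem:8}, the translation map $\phi : R \to G$ of the principal action is holomorphic, where $R\subset X\times X$ is the orbit equivalence relation. Since $x\mapsto (s(p(x)), x)$ sends $\invimage{p}{U}$ holomorphically into $R$, the formula $\psi^{-1}(x) = (p(x),\, \phi(s(p(x)),x))$ exhibits $\psi^{-1}$ as a composition of holomorphic maps. Hence $\psi$ is a biholomorphism over $U$, and the collection of such $\psi$'s realises $p$ as a holomorphic principal $G$-bundle.

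The only non-routine ingredient is the existence of local holomorphic sections of a holomorphic submersion, which is a standard consequence of the holomorphic implicit function theorem; once that is available, Lemma~\ref{lem:8} converts the set-theoretic bijection $\psi$ into a biholomorphism essentially for free, so this is where the main work is concentrated.
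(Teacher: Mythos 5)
Your proposal is correct and follows essentially the same route as the paper: both construct the local trivialisation $(u,g)\mapsto s(u)g$ from a local holomorphic section of the submersion, and both deduce holomorphy of the inverse from the biholomorphism $\tau:(x,g)\mapsto(x,xg)$ of Lemma~\ref{lem:8} (equivalently, the holomorphic translation map). No gaps.
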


\proof Let $R$ be the equivalence relation on $X$ defined by the
action of $G$, and $\tau: X \times G \to R$ the map
$(x,g)\mapsto (x,xg)$.  Then, by Lemma~\ref{lem:8}, $R$ is a complex
subpremanifold of $X\times X$, and $\tau$ is a biholomorphism.  Let
$b\in Y$.  As $p$ is surjective, there exists a point
$a\in p^{-1}(b)$.  Since $p$ is a submersion at $a$, there exist an
open neighbourhood $V$ of $b$ in $Y$, and a holomorphic section
$s : V \to X$ of $p$, such that $s(b)=a$.  The hypothesis on the
fibres of $p$ implies that the map $(c,g)\mapsto s(c)g$ is a
$G$-equivariant holomorphic bijection $u$ from $V\times G$ onto
$p^{-1}(V)$.  Its inverse is the composite
\begin{equation*}
  \label{eq:225}
  p^{-1}(V) \xrightarrow{\alpha}
  (p^{-1}(V) \times p^{-1}(V))\cap R \xrightarrow{\beta}
  p^{-1}(V) \times G \xrightarrow{\gamma}
  V \times G,
\end{equation*}
where
\begin{equation*}
  \label{eq:226}
  \alpha(x) = (x,s(p(x))), \quad
  \beta(y,z) = \tau^{-1}(y,z), \quad
  \gamma(x,g) = (p(x),g)
\end{equation*}
for all $x\in p^{-1}(V)$,
$(y,z)\in (p^{-1}(V)\times p^{-1}(V))\cap R$, and $g\in G$.  Since
$\tau^{-1}$ is holomorphic, $u^{-1}$ is also holomorphic.  By
definition, $p(u(c,g)) = c$ for all $c\in V$ and $g\in G$.  Thus, $u$
is a local trivialisation of $p$ at $b$.  It follows that $p$ is a
holomorphic principal $G$-bundle.  \qed

\begin{remark}
  \label{rem:8}
  The proof of Lemma~\ref{lem:10} also works to show that if
  $p : X \to Y$ is a surjective smooth submersion of smooth
  premanifolds, and $G$ a real Lie group, and if we are given a
  principal smooth right action of $G$ on $X$, such that
  $p^{-1}(p(x)) = xG$ for all $x\in X$, then this action makes $p$ a
  smooth principal $G$-bundle.
\end{remark}

\begin{proposition}
  \label{pro:27}
  Let $X$ be a complex premanifold, and $G$ a complex Lie group.
  Suppose that we are given a principal holomorphic right action of
  $G$ on $X$.  Let $Y$ be the quotient topological space $X/G$, and
  $p: X \to Y$ the canonical projection.  Then, there exists a unique
  structure of a complex premanifold on $Y$ such that $p$ is a
  holomorphic submersion.  This structure makes $p$ a holomorphic
  principal $G$-bundle.
\end{proposition}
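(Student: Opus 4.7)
The plan is to reduce the statement directly to the Godement theorem recalled at the start of Section~\ref{sec:quot-prem}, using Lemmas~\ref{lem:8} and \ref{lem:10} as black boxes. The uniqueness part of the Proposition is already contained in Godement's theorem, so only the existence of the premanifold structure and the principal bundle claim need to be verified.

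First I would verify condition~(\ref{item:63}) of Godement's theorem for the equivalence relation $R$ on $X$ defined by the $G$-action. By Lemma~\ref{lem:8}, $R$ is a complex subpremanifold of $X\times X$, and the map $\tau : X\times G \to R$, $(x,g)\mapsto (x,xg)$, is a biholomorphism. The composite $\pr{1}\circ \tau : X\times G \to X$ is the first projection, which is clearly a holomorphic submersion. Since $\tau$ is a biholomorphism, it follows that $\pr{1} : R \to X$ is a holomorphic submersion. Hence condition~(\ref{item:63}) is satisfied, and Godement's theorem supplies a unique complex premanifold structure on $Y = X/R = X/G$ such that the canonical projection $p : X\to Y$ is a holomorphic submersion.

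Next I would deduce that $p$ is a holomorphic principal $G$-bundle by applying Lemma~\ref{lem:10}. The surjectivity of $p$ is immediate from the definition of a quotient. For the fibre condition, note that, by the very definition of the equivalence relation induced by the action, two points $x,x'\in X$ satisfy $p(x) = p(x')$ precisely when $x'\in xG$; therefore $p^{-1}(p(x)) = xG$ for every $x\in X$. The remaining hypotheses of Lemma~\ref{lem:10} — that $p$ is a surjective holomorphic submersion and that the right action of $G$ on $X$ is principal and holomorphic — have already been established or are part of the hypothesis. Thus the lemma applies and $p$ is a holomorphic principal $G$-bundle.

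Since the two lemmas do all the real work, there is no substantive obstacle; the only point where some care is required is the reduction to condition~(\ref{item:63}) of Godement's theorem, where one must use the fact that $\tau$ is a biholomorphism (and not merely a holomorphic embedding) to transfer the submersion property of $\pr{1}\circ \tau$ on $X\times G$ to $\pr{1}$ on $R$.
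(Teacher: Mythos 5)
Your proposal is correct and follows essentially the same route as the paper: both invoke Lemma~\ref{lem:8} to see that $R$ is a subpremanifold with $\tau$ a biholomorphism, conclude that $\pr{1} : R \to X$ is a submersion since $\pr{1}\circ\tau$ is the first projection, apply Godement's theorem for existence and uniqueness, and then verify the hypotheses of Lemma~\ref{lem:10} to get the principal $G$-bundle structure. No gaps.
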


\proof Let $R$ be the equivalence relation on $X$ defined by the
action of $G$, and $\tau: X \times G \to R$ the map
$(x,g)\mapsto (x,xg)$.  Then, by Lemma~\ref{lem:8}, $R$ is a complex
subpremanifold of $X\times X$, and $\tau$ is a biholomorphism.  Since
$\pr{1}\circ \tau = \pr{1}$, and $\pr{1} : X\times G\to X$ is clearly
a submersion, it follows that $\pr{1} : R \to X$ is a submersion.
Therefore, by Godement's theorem, there exists a unique structure of a
complex premanifold on $Y$, such that $p$ is a holomorphic submersion.
It is obvious that $p$ is surjective, and that $p^{-1}(p(x)) = xG$ for
all $x\in X$.  Therefore, by Lemma~\ref{lem:10}, $p$ is a holomorphic
principal $G$-bundle.  \qed

Let $G$ be a complex Lie group acting holomorphically on the right of
a complex premanifold $X$, $Y$ the quotient topological space $X/G$,
and $p : X \to Y$ the canonical projection.  Let $H$ be a normal
complex Lie subgroup of $G$, $\bar{G}$ the complex Lie group
$H\backslash G$, and $\pi: G \to \bar{G}$ the canonical projection.
If the stabiliser $G_x$ of any point $x\in X$ equals $H$, then there
is an induced holomorphic right action of $\bar{G}$ on $X$.

\begin{corollary}
  \label{cor:3}
  Suppose that the stabiliser $G_x$ of any point $x\in X$ equals $H$,
  and that for each $x\in X$, and $H$-invariant neighbourhood $V$ of
  $e$ in $G$, there exists a neighbourhood $U$ of $x$ in $X$, such
  that $P_G(U,U)\subset V$.  Then, the action of $\bar{G}$ on $X$ is
  principal, and there exists a unique structure of a complex
  premanifold on $Y$, such that $p$ is a holomorphic submersion.
  Moreover, with the induced action of $\bar{G}$ on $X$, $p$ is a
  holomorphic principal $\bar{G}$-bundle.
\end{corollary}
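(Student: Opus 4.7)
The plan is to reduce the statement to Proposition~\ref{pro:27} by verifying that the induced action of $\bar{G}$ on $X$ is a principal holomorphic right action, and then identify the quotient topological space $X/\bar{G}$ with $Y=X/G$. First I would check that the induced $\bar{G}$-action is holomorphic: the $G$-action map $a : X\times G \to X$ is holomorphic, and since $H$ is a normal closed complex Lie subgroup, the quotient $\pi : G \to \bar{G}$ is a holomorphic principal $H$-bundle, so $\mathrm{id}_X \times \pi : X\times G \to X\times \bar{G}$ is a surjective holomorphic submersion. It follows that the factored map $\bar{a} : X\times \bar{G} \to X$ is holomorphic. The $\bar{G}$-action is free because, by hypothesis, the $G$-stabiliser at every point of $X$ equals $H$, whose image in $\bar{G}$ is trivial.

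The decisive step is to upgrade freeness to principalness using Remark~\ref{rem:7}. Given $x \in X$ and a neighbourhood $\bar{V}$ of the identity $\bar{e}$ of $\bar{G}$, set $V = \pi^{-1}(\bar{V})$; this is an $H$-invariant open neighbourhood of $e$ in $G$, $H$-invariance coming from $H \subset \ker \pi$ and openness from continuity of $\pi$. The hypothesis then supplies a neighbourhood $U$ of $x$ in $X$ with $P_G(U,U) \subset V$. If $\bar{g} \in P_{\bar{G}}(U,U)$, pick $y, z \in U$ with $y\bar{g} = z$ and choose any lift $g \in G$ of $\bar{g}$; then $yg = y\bar{g} = z$ under the $G$-action, so $g \in P_G(U,U) \subset V$, whence $\bar{g} = \pi(g) \in \bar{V}$. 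Thus $P_{\bar{G}}(U,U) \subset \bar{V}$, which is the criterion of Remark~\ref{rem:7} for principalness of the $\bar{G}$-action.

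Now Proposition~\ref{pro:27} applies to the principal holomorphic $\bar{G}$-action on $X$, endowing the quotient topological space $X/\bar{G}$ with a unique complex premanifold structure such that the canonical projection $q : X \to X/\bar{G}$ is a holomorphic submersion, and making $q$ a holomorphic principal $\bar{G}$-bundle. The last step is the identification of $X/\bar{G}$ with $Y$: for every $x\in X$ the $\bar{G}$-orbit of $x$ coincides with its $G$-orbit, because the $\bar{G}$-action factors through the surjection $\pi$, so the two orbit partitions of $X$ are the same and both quotient topologies agree with the quotient topology inherited from $X$. Transporting the complex premanifold structure from $X/\bar{G}$ to $Y$ yields the statement. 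I do not anticipate a serious obstacle; the hypothesis on $H$-invariant neighbourhoods is designed exactly to produce the principal-action criterion of Remark~\ref{rem:7} for $\bar{G}$, and the remainder is bookkeeping against Proposition~\ref{pro:27}.
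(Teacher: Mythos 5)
Your proposal is correct and follows essentially the same route as the paper's own proof: reduce to Proposition~\ref{pro:27} by verifying principalness of the induced $\bar{G}$-action via the criterion of Remark~\ref{rem:7}, pulling back a neighbourhood of the identity in $\bar{G}$ to an $H$-invariant neighbourhood in $G$ and using the hypothesis to bound $P_{\bar{G}}(U,U)$. The only difference is that you spell out the holomorphy of the induced action and the identification $X/\bar{G}=X/G$, which the paper treats as obvious.
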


\proof The induced action of $\bar{G}$ on $X$ is free, since $G_x=H$
for all $x\in X$.  Let $x\in X$, and let $W$ be a neighbourhood of $e$
in $\bar{G}$.  Then, $V=\pi^{-1}(W)$ is an $H$-invariant neighbourhood
of $e$ in $G$.  Therefore, by hypothesis, there exists an open
neighbourhood $U$ of $x$ in $X$, such that $P_G(U,U)\subset V$.  Now,
$U = U\cap X$ is an open neighbourhood of $x$ in $X$, and
$P_{\bar{G}}(U,U) \subset \pi(P_G(U,U)) \subset \pi(P_G(U,U)) \subset
\pi(V) \subset W$.  Therefore, by Remark~\ref{rem:7}, the action of
$\bar{G}$ on $X$ is principal.  It is obvious that the induced action
of $\bar{G}$ on $X$ is holomorphic, that $X/\bar{G} = X/G = Y$, and
that the canonical projection from $X$ to $X/\bar{G}$ equals $p$.  The
Corollary now follows from Proposition~\ref{pro:27}.  \qed

\subsection{The complex premanifold of Schur representations}
\label{sec:compl-prem-schur}

Let $Q$ be a non-empty finite quiver.  We will consider only complex
representations of $Q$ in this subsection.  Let $d=(d_a)_{a\in Q_0}$
be a non-zero element of $\N^{Q_0}$, and fix a family
$V=(V_a)_{a\in Q_0}$ of $\C$-vector spaces, such that
$\dim[\C]{V_a}=d_a$ for all $a\in Q_0$.

Denote by $\mathcal{A}$ the finite-dimensional $\C$-vector space
$\bigoplus_{\alpha\in Q_1}\Hom[\C]{V_{\sr(\alpha)}}{V_{\tg(\alpha)}}$.
For every element $\rho$ of $\mathcal{A}$, we have a representation
$(V,\rho)$ of $Q$.  Moreover, for every representation $(W,\sigma)$ of
$Q$, such that $\dim{W,\sigma}=d$, there exists an element $\rho$ of
$\mathcal{A}$, such that the representations $(V,\rho)$ and
$(W,\sigma)$ are isomorphic.

We give the vector space $\mathcal{A}$ the usual topology, and the
usual structure of a complex manifold.  For each $a\in Q_0$, denote by
$E_a$ the trivial holomorphic vector bundle $\mathcal{A}\times V_a$ on
$\mathcal{A}$.  Then, for every $\alpha\in Q_1$, we have a morphism
$\theta_\alpha : E_{\sr(\alpha)} \to E_{\tg(\alpha)}$ of holomorphic
vector bundles, which is defined by
$\theta_\alpha(\rho,v) = (\rho,\rho_\alpha(v))$ for all
$(\rho,v)$ in $E_{\sr(\alpha)}$.  We thus get a family $(E,\theta)$ of
representations of $Q$ parametrised by $\mathcal{A}$, where
$E=(E_a)_{a\in Q_0}$ and $\theta=(\theta_\alpha)_{\alpha\in Q_1}$.  By
definition, for each point $\rho\in \mathcal{A}$, the fibre
representation $E(\rho)$ is precisely $(V,\rho)$.

Let $G$ be the complex Lie group $\prod_{a\in Q_0}\Aut[\C]{V_a}$.
There is a canonical holomorphic linear right action
$(\rho,g)\mapsto \rho g$ of $G$ on $\mathcal{A}$, which is defined by
\begin{equation*}
  \label{eq:231}
  (\rho g)_\alpha =
  g_{\tg(\alpha)}^{-1} \circ \rho_\alpha \circ g_{\sr(\alpha)}
\end{equation*}
for all $\rho\in \mathcal{A}$, $g\in G$, and $\alpha\in Q_1$.  For all
$\rho,\sigma \in \mathcal{A}$ and $g\in G$, we have $\sigma = \rho g$
if and only if $g$ is an isomorphism of representations of $Q$, from
$(V,\sigma)$ to $(V,\rho)$.  In other words, two points $\rho$ and
$\sigma$ of $\mathcal{A}$ lie on the same orbit of $G$ if and only if
the representations $(V,\rho)$ and $(V,\sigma)$ of $Q$ are isomorphic.
Thus, the map which takes every point $\rho$ of $\mathcal{A}$ to the
representation $(V,\rho)$ induces a bijection from the quotient set
$\mathcal{A}/G$ onto the set of isomorphism classes of representations
$(W,\sigma)$ of $Q$, such that $\dim{W,\sigma} = d$.

Denote by $H$ the central complex Lie subgroup of $G$ consisting of
all elements of the form $ce$, as $c$ runs over $\units{\C}$, where
$e=(\id{V_a})_{a\in Q_0}$ is the identity element of $G$.  Let
$\bar{G}$ denote the complex Lie group $H\backslash G$,
$\pi : G \to \bar{G}$ the canonical projection.  Define $\mathcal{B}$
to be the set of all points $\rho$ of $\mathcal{A}$, such that the
representation $(V,\rho)$ of $Q$ is Schur.  It is a $G$-invariant
subset of $\mathcal{A}$.  By Proposition~\ref{pro:2}(\ref{item:25}), a
point $\rho$ of $\mathcal{A}$ lies in $\mathcal{B}$ if and only if its
stabiliser $G_\rho$ equals $H$.  Corollary~\ref{cor:6}, applied to the
family $(E,\theta)$ of representations of $Q$ parametrised by
$\mathcal{A}$, implies that $\mathcal{B}$ is Zariski open in
$\mathcal{A}$, and is hence an open complex submanifold of
$\mathcal{A}$.  Let $M$ denote the quotient topological space
$\mathcal{B}/G$, and $p : \mathcal{B} \to M$ the canonical projection.
By the above observation, there is a canonical bijection from $M$ onto
the set of isomorphism classes of Schur representations $(W,\sigma)$
of $Q$, such that $\dim{W,\sigma} = d$.  We will call $M$ the
\emph{moduli space} of Schur representations of $Q$ with dimension
vector $d$.  Note that the action of $G$ on $\mathcal{A}$ induces a
holomorphic right action of $\bar{G}$ on $\mathcal{B}$.

The Lie algebra $\Lie{G}$ of $G$ is the direct sum Lie algebra
$\bigoplus_{a\in Q_0} \End[\C]{V_a}$, where, for each $a\in Q_0$, the
associative $\C$-algebra $\End[\C]{V_a}$ is given its usual Lie
algebra structure.  Note that $\Lie{G}$ has a canonical structure of
an associative unital $\C$-algebra, and that $G$ is the group of units
of the underlying ring of $\Lie{G}$, and is open in $\Lie{G}$.  The
Lie algebra of $H$ is the Lie subalgebra of $\Lie{G}$ consisting of
all elements of the form $ce$, as $c$ runs over $\C$.  Let
$\mathrm{Tr} : \Lie{G} \to \C$ be the $\C$-linear function defined by
$\tr{\xi} = \sum_{a\in Q_0}\tr{\xi_a}$ for all elements
$\xi = (\xi_a)_{a\in Q_0}$ of $\Lie{G}$, and let $\Lie{G}^0$ denote
its kernel.  Then, as $d\neq 0$,
$\tr{e} = \rk{d} = \sum_{a\in Q_0} d_a$ is a non-zero natural number,
and we have a decomposition $\Lie{G} = \Lie{H} \oplus \Lie{G}^0$.  As
$d$ is a non-zero element of $\N^{Q_0}$, we have $e\neq 0$, hence the
map $t\mapsto te : \C \to \Lie{H}$ is a $\C$-isomorphism.  For any
element $\xi$ of $\Lie{G}$, we define $(c(\xi),\xi^0)$ to be the
unique element of $\C \times \Lie{G}^0$, such that
$\xi = c(\xi)e+\xi^0$.  Then, $c(\xi) = \frac{\tr{\xi}}{\rk{d}}$, and
$\xi^{0} = \xi - c(\xi)e$ for all $\xi\in \Lie{G}$.

For every element $\rho$ of $\mathcal{A}$, we denote the orbit map
$g\mapsto \rho g :G \to \mathcal{A}$ by $\mu_\rho$, and by $D_\rho$
the $\C$-linear map $\htang[e]{\mu_\rho} : \Lie{G} \to \mathcal{A}$.
Thus,
\begin{equation*}
  \label{eq:235}
  D_\rho(\xi) =
  (\rho_\alpha \circ \xi_{\sr(\alpha)} -
  \xi_{\tg(\alpha)} \circ \rho_\alpha)_{\alpha\in Q_1}
\end{equation*}
for all $\xi \in \Lie{G}$.  Therefore, $\Ker{D_\rho} = \End{V,\rho}$.
In particular, $\Ker{D_\rho} = \Lie{H}$ if $\rho\in \mathcal{B}$.

It will be convenient to fix a family $h=(h_a)_{a\in Q_0}$ of
Hermitian inner products $h_a : V_a \times V_a \to \C$.  Thus, for
every point $\rho\in \mathcal{A}$, $h$ is a Hermitian metric on the
representation $(V,\rho)$ of $Q$.

For any two finite-dimensional Hermitian inner product spaces $V$ and
$W$, we have a Hermitian inner product $\pair{\cdot}{\cdot}$ on the
$\C$-vector space $\Hom[\C]{V}{W}$, which is defined by
$\pair{u}{v} = \tr{u\circ v^*}$ for all $u,v\in \Hom[\C]{V}{W}$,
where, $v^* : W \to V$ is the adjoint of $v$.  If we denote the norm
associated to this Hermitian inner product by $\norm{\cdot}$, then
$\norm{u(x)} \leq \norm{u} \norm{x}$ for all $u\in \Hom[\C]{V}{W}$ and
$x\in V$.  Also, $\norm{u^*} = \norm{u}$,
$\norm{\id{V}} = \sqrt{\dim[\C]{V}}$, and, for all finite-dimensional
Hermitian inner product spaces $V$, $W$, and $X$, and for all
$u\in \Hom[\C]{V}{W}$ and $v\in \Hom[\C]{W}{X}$, we have
$\norm{v\circ u} \leq \norm{v}\norm{u}$.

\begin{remark}
  \label{rem:9}
  Using the above facts, it is easy to verify that for every
  $u\in \Hom[\C]{V}{W}$, there exists a real number $\theta>0$, such
  that $\theta \norm{x} \leq \norm{u(x)}$ for all
  $x\in \Ker{u}^\perp$, where $X^\perp$ denotes the orthogonal
  complement of any subset $X$ of a finite-dimensional Hermitian inner
  product space.
\end{remark}

In particular, the family $h$ induces a Hermitian inner product
$\pair{\cdot}{\cdot}$ on the $\C$-vector space $\Hom[\C]{V_a}{V_b}$
for all $a,b\in Q_0$.  We give $\Lie{G}$ the Hermitian inner product
$\pair{\cdot}{\cdot}$ which is the direct sum of the Hermitian inner
products $\pair{\cdot}{\cdot}$ on $\End[\C]{V_a}$ as $a$ runs over
$Q_0$.  Note that $\norm{e} = \sqrt{\rk{d}}$ with respect to this
Hermitian inner product, and that $\Lie{H}^\perp = \Lie{G}^0$.  Let
$u: \Lie{G} \to \Lie{H}$ be the corresponding orthogonal projection.

Similarly, we give $\mathcal{A}$ the Hermitian inner product
$\pair{\cdot}{\cdot}$ which is the direct sum of the Hermitian inner
products $\pair{\cdot}{\cdot}$ on
$\Hom[\C]{V_{\sr(\alpha)}}{V_{\tg(\alpha)}}$ as $\alpha$ runs over
$Q_1$.  For every $\rho\in \mathcal{A}$, we have the adjoint
$D_\rho^* : \mathcal{A} \to \Lie{G}$ of the $\C$-linear map
$D_\rho : \Lie{G} \to \mathcal{A}$ that was defined above.

\begin{theorem}
  \label{thm:1}
  The action of $\bar{G}$ on $\mathcal{B}$ is principal.  In
  particular, there exists a unique structure of a complex premanifold
  on the moduli space $M$ of complex Schur reprsentations of $Q$ with
  dimension vector $d$, such that $p : \mathcal{B} \to M$ is a
  holomorphic submersion.  Moreover, this structure makes the map $p$
  a holomorphic principal $\bar{G}$-bundle.
\end{theorem}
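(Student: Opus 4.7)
The plan is to apply Corollary~\ref{cor:3} to the right action of $G$ on $\mathcal{B}$ with normal subgroup $H$. The stabilizer condition $G_\rho = H$ for every $\rho\in\mathcal{B}$ is built into the definition of $\mathcal{B}$: for a Schur representation $(V,\rho)$, Proposition~\ref{pro:2}(\ref{item:25}) forces every endomorphism to be a scalar, so $G_\rho = H$. The substantive hypothesis to verify is the local properness statement: for each $\rho\in\mathcal{B}$ and each $H$-invariant open neighborhood $V$ of $e$ in $G$, there exists a neighborhood $U$ of $\rho$ in $\mathcal{B}$ with $P_G(U,U)\subset V$. Once this is established, Corollary~\ref{cor:3} gives the unique complex premanifold structure on $M$ making $p$ a holomorphic submersion, together with the holomorphic principal $\bar{G}$-bundle structure on $p$, in one stroke.

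To verify the local properness, I would construct a holomorphic slice at $\rho$ using the Hermitian structures already fixed on $\Lie{G}$ and $\mathcal{A}$. Since $\rho\in\mathcal{B}$ gives $\Ker{D_\rho} = \Lie{H}$, the restriction of $D_\rho$ to $\Lie{G}^0 = \Lie{H}^\perp$ is injective with image $\Img{D_\rho}$, and the affine submanifold $\Sigma = \rho + \Img{D_\rho}^\perp$ of $\mathcal{A}$ is transverse to the orbit direction at $\rho$. Since $H$ acts trivially on $\mathcal{A}$, the $G$-action descends to $\bar{G}$, giving a holomorphic map $\Phi : \bar{G}\times\Sigma \to \mathcal{A}$, $(\bar{g},s)\mapsto sg$. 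Its differential at $(\bar{e},\rho)$ splits as $\bar{D}_\rho \oplus \iota : \Lie{\bar{G}} \oplus \Img{D_\rho}^\perp \to \Img{D_\rho} \oplus \Img{D_\rho}^\perp = \mathcal{A}$, a $\C$-linear isomorphism. By the inverse function theorem, there are open neighborhoods $W$ of $\bar{e}$ in $\bar{G}$, $\Sigma'$ of $\rho$ in $\Sigma$, and $U_0$ of $\rho$ in $\mathcal{B}$ on which $\Phi : W\times\Sigma' \to U_0$ is a biholomorphism.

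Given $V$, I would shrink $W$ so that $W^{-1}W \subset \pi(V)$, then take $U$ a sufficiently small neighborhood of $\rho$ inside $U_0$. For $\rho_1,\rho_2\in U$ with $\rho_1 g = \rho_2$, the unique decompositions $\rho_i = s_i g_i$ with $(\bar{g}_i,s_i)\in W\times\Sigma'$ give $s_1(g_1 g g_2^{-1}) = s_2$. The key claim is that $\Sigma'$ meets each $\bar{G}$-orbit of $\mathcal{B}$ in at most one point: granting this, $s_1 = s_2$ and $g_1 g g_2^{-1} \in G_{s_1} = H$, so $\bar{g} = \bar{g}_1^{-1}\bar{g}_2 \in W^{-1}W \subset \pi(V)$, and the $H$-invariance of $V$ gives $g\in V$. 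The main obstacle is this slice uniqueness. Inside the biholomorphic image $U_0$ it is automatic, since orbits appear as horizontal leaves $W\times\{s\}$; the delicate content is preventing a $\bar{G}$-orbit from leaving $U_0$ through a large orbit-element and returning to $\Sigma'$ at a distinct point. I expect to handle this by contradiction: a sequence $s_n,s'_n\to\rho$ in $\Sigma$ with $s'_n = s_n g_n$ and $g_n\notin H$ either has $\bar{g}_n$ bounded, in which case a convergent subsequence yields $\rho g_\infty = \rho$, so $g_\infty\in G_\rho = H$ and eventually $\bar{g}_n\in W$, contradicting the biholomorphism; or $\bar{g}_n$ escapes to infinity in $\bar{G}$, which is the genuinely hard case and would require a compactness argument drawing on the Hermitian norms on $\Lie{G}$ and $\mathcal{A}$, or alternatively the constructibility of orbits of the algebraic $G$-action.
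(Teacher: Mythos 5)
Your reduction to Corollary~\ref{cor:3} and the identification of the stabiliser condition via Proposition~\ref{pro:2}(\ref{item:25}) match the paper exactly; the entire content of the theorem is the local condition $P_G(U,U)\subset V$, and there your proposal has a genuine gap. The slice construction via the inverse function theorem is fine as far as it goes, but it reduces everything to the claim that $\Sigma'$ meets each $\bar{G}$-orbit at most once near $\rho$, and you yourself isolate the case where the connecting elements escape to infinity in $\bar{G}$ as ``the genuinely hard case'' without resolving it. That case is precisely the crux: since $\bar{G}$ is not compact, nothing in the local biholomorphism $W\times\Sigma'\to U_0$ prevents two points of $\Sigma'$ arbitrarily close to $\rho$ from lying on one orbit via an element that is large modulo $H$, and ruling this out is essentially what ``principal'' means here. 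The suggested appeal to constructibility of algebraic orbits would not close it either as stated: local closedness of each individual orbit gives no uniform control over $P_G(U,U)$, which involves pairs of points on \emph{different} nearby orbits.

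The paper avoids the slice altogether and closes exactly this gap with a quantitative estimate valid for \emph{every} $g$, with no boundedness assumption and no case distinction. From $\Ker{D_\rho}=\Lie{H}$ one gets $\theta>0$ with $\theta\norm{f}\leq\norm{D_\rho(f)}$ for all $f\in\Lie{G}^0$ (your injectivity observation, made quantitative).  The ingredient you are missing is the identity: if $\tau=\sigma g$, $\sigma'=\sigma-\rho$ and $\tau'=\tau-\rho$, then
\begin{equation*}
  D_\rho(g)=
  \bigl(g_{\tg(\alpha)}\circ\tau'_\alpha-
  \sigma'_\alpha\circ g_{\sr(\alpha)}\bigr)_{\alpha\in Q_1},
\end{equation*}
whence, with $q_1=\card{Q_1}$,
$\theta\norm{g^0}\leq\norm{D_\rho(g)}\leq
q_1\bigl(\abs{c(g)}\sqrt{\rk{d}}+\norm{g^0}\bigr)
\bigl(\norm{\sigma'}+\norm{\tau'}\bigr)$.
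Once $q_1(\norm{\sigma'}+\norm{\tau'})<\theta$ this forces $c(g)\neq 0$ and gives an explicit bound on $\bigl\lVert\tfrac{1}{c(g)}g-e\bigr\rVert$ tending to $0$ as $\sigma,\tau\to\rho$; then $\tfrac{1}{c(g)}g$ lies in any prescribed ball around $e$ and the $H$-invariance of $V$ yields $g\in HV\subset V$. If you prefer to keep your compactness framing, the same Schur input rescues it: replace $g_n$ by $g_n/\norm{g_n}$ (the intertwining relation is linear in $g$), extract a limit $\tilde g_\infty$ with $\norm{\tilde g_\infty}=1$ lying in $\End{V,\rho}=\C e$, and conclude $\tilde g_\infty=ce$ with $c\neq 0$, so that $\tfrac{1}{c(g_n)}g_n\to e$ --- this handles your bounded and unbounded cases simultaneously and also repairs the unaddressed possibility that a naive limit of the $g_n$ fails to be invertible. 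Either way, some form of this estimate must be supplied before the proposal constitutes a proof.
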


\proof Let $\rho$ be an arbitrary point of $\mathcal{B}$.  Then, as
observed above, $\mathcal{B}$ is a $G$-invariant open complex
submanifold of $\mathcal{A}$, so there is an induced holomorphic right
action of $G$ on $\mathcal{B}$.  Also, $G_\rho = H$ for all
$\rho\in \mathcal{B}$.  Therefore, by Corollary~\ref{cor:3}, it
suffices to prove that for every $H$-invariant neighbourhood $V$ of
$e$ in $G$, there exists an open neighbourhood $U$ of $\rho$ in
$\mathcal{B}$, such that $P_G(U,U)\subset V$.

Let $D_\rho : \Lie{G} \to \mathcal{A}$ be the $\C$-linear map defined
earlier.  Then, as noted above, $\Ker{D_\rho} = \Lie{H}$.  Therefore,
$\Ker{D_\rho}^\perp = \Lie{H}^\perp = \Lie{G}^0$, hence, by
Remark~\ref{rem:9}, there exists a real number $\theta>0$, such that
$\theta\norm{f} \leq \norm{D_\rho(f)}$ for all $f\in \Lie{G}^0$.  Let
$q_1=\card{Q_1}$.  Then, the continuity of the norm function on
$\mathcal{A}$ implies that the set $X$ of all
$(\sigma,\tau) \in \mathcal{A} \times \mathcal{A}$, such that
$q_1(\norm{\sigma-\rho}+\norm{\tau-\rho}) < \theta$, is an open
neighbourhood of $(\rho,\rho)$ in $\mathcal{A}\times \mathcal{A}$.

Consider a point $(\sigma,\tau) \in X$, let $g\in G$, and suppose
$\tau=\sigma g$.  Then, by the above paragraph,
\begin{equation*}
  \label{eq:241}
  \theta\norm{g^0} \leq \norm{D_\rho(g^0)}.
\end{equation*}
Let $\sigma' = \sigma-\rho$ and $\tau'=\tau-\rho$.  The relation
$\tau = \sigma g$ implies that
\begin{equation*}
  \label{eq:242}
  D_\rho(g) =
  \bigl(
  g_{\tg(\alpha)}\circ \tau'_\alpha -
  \sigma'_\alpha \circ g_{\sr(\alpha)}
  \bigr)_{\alpha\in Q_1}.
\end{equation*}
Therefore, as $c(g)e\in \Ker{D_\rho}$, we have
\begin{align*}
  \norm{D_\rho(g^0)}
  &=
    \norm{D_\rho(g)}
    \leq
    \sum_{\alpha\in Q_1}
    \norm{g_{\tg(\alpha)}\circ \tau'_\alpha -
    \sigma'_\alpha \circ g_{\sr(\alpha)}} \\
  &\leq
    q_1
    \bigl(
    \abs{c(g)}\sqrt{\rk{d}} + \norm{g^0}
    \bigr)
    \bigl(
    \norm{\tau'} + \norm{\sigma'}
    \bigr).
\end{align*}
Thus,
\begin{equation*}
  \label{eq:243}
  \theta\norm{g^0} \leq
  q_1
  \bigl(
  \abs{c(g)}\sqrt{\rk{d}} + \norm{g^0}
  \bigr)
  \bigl(
  \norm{\sigma'} + \norm{\tau'}
  \bigr).
\end{equation*}

As $(\sigma,\tau)\in X$, we have
$q_1(\norm{\sigma'}+\norm{\tau'}) < \theta$, hence this implies that
\begin{equation*}
  \label{eq:245}
  \norm{g^0} \leq
  \frac{q_1
    \abs{c(g)}\sqrt{\rk{d}} (\norm{\sigma'} + \norm{\tau'})}{\theta
    -  q_1(\norm{\sigma'} + \norm{\tau'})}.
\end{equation*}
In particular, $c(g)\neq 0$; for, if $c(g)=0$, then, by the above
inequality, we get $g^0=0$, hence $g=c(g)e+g^0=0$; therefore, as
$g_a\in \Aut[\C]{V_a}$, we have $V_a=0$ for every $a\in Q_0$, a
contradiction, since $d$ is a non-zero element of $\N^{Q_0}$.  It
follows that
\begin{equation*}
  \label{eq:246}
  \left\Vert \frac{1}{c(g)}g - e \right\Vert
  =
  \left\Vert \frac{1}{c(g)}g^0 \right\Vert
  =
  \frac{1}{\abs{c(g)}} \norm{g^0}
  \leq
  \frac{q_1
    \sqrt{\rk{d}} (\norm{\sigma'} + \norm{\tau'})}{\theta
    -  q_1(\norm{\sigma'} + \norm{\tau'})}.
\end{equation*}

We have thus shown that for all $(\sigma,\tau)\in X$ and $g\in G$,
such that $\tau=\sigma g$, we have $c(g)\neq 0$, and
\begin{equation*}
  \label{eq:247}
  \left\Vert \frac{1}{c(g)}g - e \right\Vert
  \leq
  \delta(\sigma,\tau),
\end{equation*}
where $\delta : X \to [0,+\infty)$ is the function defined by
\begin{equation*}
  \label{eq:248}
  \delta(\sigma,\tau)
  =
  \frac{q_1
    \sqrt{\rk{d}} (\norm{\sigma-\rho} + \norm{\tau-\rho})}{\theta
    -  q_1(\norm{\sigma-\rho} + \norm{\tau-\rho})}.
\end{equation*}

Now, let $V$ be an $H$-invariant open neighbourhood of $e$ in $G$.
Then, as $G$ is open in $\Lie{G}$, there exists $\epsilon>0$, such
that the open ball $B(e,\epsilon)$ in $\Lie{G}$, with radius
$\epsilon$ and centre $e$, is contained in $V$.  As $X$ is an open
neighbourhood of $(\rho,\rho)$ in $\mathcal{A}\times \mathcal{A}$, and
the function $\delta$ is continuous, there exists an open
neighbourhood $U$ of $\rho$ in $\mathcal{B}$, such that
$U\times U \subset X$, and $\delta(\sigma,\tau) < \epsilon$ for all
$(\sigma,\tau) \in U\times U$.  We claim that $P_G(U,U)\subset V$.
Let $g\in P_G(U,U)$.  Then, there exists a point $(\sigma,\tau)$ of
$U\times U$, such that $\tau=\sigma g$.  As $(\sigma,\tau)\in X$, by
the above paragraph, $c(g)\neq 0$, and
$\norm{\frac{1}{c(g)}g-e} \leq \delta(\sigma,\tau) < \epsilon$, hence
$\frac{1}{c(g)}g \in B(e,\epsilon) \subset V$.  As $V$ is
$H$-invariant,
$g = \bigl(c(g)e\bigr) \bigl(\frac{1}{c(g)}g\bigr) \in HV \subset V$.
Thus, $P_G(U,U) \subset V$.  \qed

\section{The K\"ahler metric on moduli of stable representations}
\label{sec:kahler-metric-moduli}

\subsection{Moment maps}
\label{sec:moment-maps}

Let $(X,\Omega)$ be a smooth symplectic manifold.  Recall that a
smooth vector field $\xi$ on $X$ is called \emph{symplectic} if
$L_\xi(\Omega) = 0$, where $L_\xi$ is the Lie derivative with respect
to $\xi$.  The set $\vect{X,\Omega}$ of symplectic vector fields on
$X$ is a Lie subalgebra of the real Lie algebra $\vect{X}$ of smooth
vector fields on $X$.  For any smooth real function $f$ on $X$, we
denote by $H(f)$ the Hamiltonian vector field of $f$, that is, the
unique smooth vector field on $X$, such that
$\Omega(x)(H(f)(x),w) = w(f)$ for all $x\in X$ and $w\in \tang[x]{X}$,
where $\tang[x]{X}$ denotes the tangent space of $X$ at $x$; it is a
symplectic vector field.  We define the Poisson bracket of any two
smooth real functions $f$ and $g$ on $X$, by
$\{f,g\} = \Omega(H(g),H(f))$.  This makes the $\R$-vector space
$S(X)$ of smooth real functions on $X$ a Lie algebra, and the map
$f\mapsto H(f)$ is a homomorphism of real Lie algebras
$H : S(X) \to \vect{X,\Omega}$.

Let $K$ be a real Lie group.  Suppose we are given a smooth right
action of $K$ on $X$, which is symplectic, by which we mean that it
preserves the symplectic form $\Omega$ on $X$.  Thus, $\Omega$ is
$K$-invariant, that is, $\rho_g^*(\Omega) = \Omega$ for all $g\in K$,
where $\rho_g$ denotes the translation by $g$ on every right
$K$-space.  Then, for every element $\xi$ of the Lie algebra $\Lie{K}$
of $K$, the induced vector field $\xi^\sharp$ on $X$ is symplectic.
The map $\xi \mapsto \xi^\sharp$ is a homomorphism of real Lie
algebras from $\Lie{K}$ to $\vect{X,\Omega}$, which is $K$-equivariant
for the adjoint action of $K$ on $\Lie{K}$, and the canonical action
of $K$ on $\vect{X,\Omega}$.

A \emph{moment map} for the action of $K$ on $X$ is a smooth map
$\Phi : X \to \Lie{K}^*$, which is $K$-invariant for the coadjoint
action of $K$ on $\Lie{K}^*$, and has the property that
$H(\Phi^\xi)=\xi^\sharp$ for all $\xi \in \Lie{K}$, where $\Phi^\xi$
is the smooth real function $x\mapsto \Phi(x)(\xi)$ on $X$.  If $\Phi$
is a moment map, then, for every $x\in X$, the $\R$-linear map
$\tang[x]{\Phi} : \tang[x]{X} \to \Lie{K}^*$ is given by
$\tang[x]{\Phi}(v)(\xi) = \Omega(x)(\xi^\sharp(x),v)$ for all
$v\in \tang[x]{X}$ and $\xi\in \Lie{K}$.  Thus,
$\Ker{\tang[x]{\Phi}} = \Img{\tang[e]{\nu_x}}^{\perp(\Omega)}$, where
$\nu_x : K \to X$ is the orbit map of $x$,
$\tang[e]{\nu_x} : \Lie{K} \to \tang[x]{X}$ is the induced $\R$-linear
map, and $S^{\perp(\Omega)}$ denotes the set of elements of
$\tang[x]{X}$ that are $\Omega(x)$-orthogonal to any subset $S$ of
$\tang[x]{X}$.  This implies that
$\Ker{\tang[x]{\Phi}}^{\perp(\Omega)} = \Img{\tang[e]{\nu_x}}$.  Also,
$\Img{\tang[x]{\Phi}} = \Ann{\Lie{K_x}}$, where $K_x$ is the
stabiliser of $x$ in $K$, and $\Ann{M}$ denotes the annihilator in
$\Lie{K}^*$ of any subset $M$ of $\Lie{K}$.  In particular, $\Phi$ is
a submersion at $x$ if and only if the subgroup $K_x$ of $K$ is
discrete.

For later use, we state here a simple fact about moment maps for
linear actions.  Let $V$ be a finite-dimensional $\R$-vector space,
and $\Omega$ a symplectic form on $V$.  Since the tangent space of $V$
at any point is canonically isomorphic to $V$ itself, $\Omega$ defines
a smooth $2$-form on $V$.  By abuse of notation, we will denote this
smooth $2$-form also by $\Omega$.  In any linear coordinate system on
$V$, we can express $\Omega$ as a form with constant coefficients, so
$\diff \Omega = 0$.  Therefore, $(V,\Omega)$ is a symplectic manifold.

Let $K$ be a real Lie group, and suppose that we are given a smooth
linear right action of $K$ on $V$, which preserves the symplectic form
$\Omega$ on $V$.  For any element $\xi$ of $\Lie{K}$, let $\xi^\sharp$
be the vector field on $V$ defined by $\xi$; then $\xi^\sharp$ is an
$\R$-endomorphism of $V$, and
$\Omega(\xi^\sharp(x),y)+\Omega(x,\xi^\sharp(y)) = 0$ for all
$x,y\in V$.  For each element $\alpha$ of $\Lie{K}^*$, define a map
$\Phi_\alpha : V \to \Lie{K}^*$ by
\begin{equation*}
  \label{eq:341}
  \Phi_\alpha(x)(\xi) =
  \frac{1}{2}\Omega(\xi^\sharp(x),x) + \alpha(\xi)
\end{equation*}
for all $x\in V$ and $\xi\in \Lie{K}$.

\begin{lemma}
  \label{lem:14}
  The map $\alpha \mapsto \Phi_\alpha$ is a bijection from the set of
  $K$-invariant elements of $\Lie{K}^*$ onto the set of moment maps
  for the action of $K$ on $V$.
\end{lemma}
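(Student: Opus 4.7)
The plan is to verify that $\Phi_\alpha$ always satisfies the Hamiltonian condition of a moment map, that its equivariance defect is exactly $\alpha(\xi) - \alpha(\Ad(g)\xi)$, and finally that every moment map equals $\Phi_{\Phi(0)}$.

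First, I would check that $H(\Phi_\alpha^\xi) = \xi^\sharp$ for every $\xi \in \Lie{K}$, independently of $\alpha$. Identifying tangent vectors of $V$ with vectors of $V$, differentiating the quadratic $x \mapsto \tfrac{1}{2}\Omega(\xi^\sharp(x), x)$ in direction $v$ gives $\tfrac{1}{2}\Omega(\xi^\sharp(v), x) + \tfrac{1}{2}\Omega(\xi^\sharp(x), v)$. The skew-property $\Omega(\xi^\sharp(x), y) + \Omega(x, \xi^\sharp(y)) = 0$ combined with the antisymmetry of $\Omega$ shows these two summands are equal, so $d\Phi_\alpha^\xi(x)(v) = \Omega(\xi^\sharp(x), v)$, which by definition of the Hamiltonian vector field means $H(\Phi_\alpha^\xi)(x) = \xi^\sharp(x)$.

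Next, I would compute the equivariance defect. Writing $\psi_g(x) = xg$, the conjugation identity $g \exp(t\xi) = \exp(t\,\Ad(g)\xi)\, g$ yields, after differentiating in $t$ at $0$, the transformation law $\xi^\sharp(xg) = \psi_g((\Ad(g)\xi)^\sharp(x))$. Since the linear action preserves $\Omega$, this implies
\begin{equation*}
  \Phi_\alpha(xg)(\xi) = \tfrac{1}{2}\Omega((\Ad(g)\xi)^\sharp(x), x) + \alpha(\xi) = \Phi_\alpha(x)(\Ad(g)\xi) + \alpha(\xi) - \alpha(\Ad(g)\xi).
\end{equation*}
Thus $\Phi_\alpha$ is equivariant for the (right) coadjoint action exactly when $\alpha \circ \Ad(g) = \alpha$ for every $g \in K$, i.e., when $\alpha$ is $K$-invariant. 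Combined with the previous paragraph, $\Phi_\alpha$ is a moment map precisely when $\alpha$ is $K$-invariant, so $\alpha\mapsto \Phi_\alpha$ is well-defined on the stated domain and codomain.

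Finally, for injectivity and surjectivity: evaluating gives $\Phi_\alpha(0)(\xi) = \alpha(\xi)$, so $\alpha$ is recovered from $\Phi_\alpha$, proving injectivity. For surjectivity, given a moment map $\Phi$, set $\alpha = \Phi(0)$; since $0 \in V$ is fixed by the linear action, coadjoint equivariance of $\Phi$ forces $\alpha$ to be $K$-invariant. The difference $\Psi := \Phi - \Phi_\alpha$ satisfies $H(\Psi^\xi) = \xi^\sharp - \xi^\sharp = 0$, so each scalar function $\Psi^\xi$ is locally constant on the connected space $V$; consequently $\Psi$ itself is a constant map into $\Lie{K}^*$, and its value at $0$ equals $\alpha - \alpha = 0$, whence $\Phi = \Phi_\alpha$. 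The only step requiring any real care is the transformation law for $\xi^\sharp$ under a right action in step two; matching this with the correct right-action convention for coadjoint equivariance is where a sign slip could appear, but everything else is routine.
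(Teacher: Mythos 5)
Your proof is correct and follows essentially the same route as the paper: verify the Hamiltonian condition, reduce equivariance to $K$-invariance of $\alpha$, and use connectedness of $V$ to show any moment map differs from $\Phi_0$ by a constant $K$-invariant element of $\Lie{K}^*$. You merely spell out the computations (the differentiation of the quadratic term and the transformation law for $\xi^\sharp$) that the paper dismisses as "easy to see".
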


\proof It is easy to see that $\Phi_0$ is a moment map for the action
of $K$ on $V$.  Let $\alpha\in \Lie{K}^*$.  Then,
$\Phi_\alpha = \Phi_0 + \alpha$.  Therefore,
$H(\Phi_\alpha^\xi) = H(\Phi_0) = \xi^\sharp$ for all
$\xi\in \Lie{K}$; moreover, since $\Phi_0$ is $K$-equivariant,
$\Phi_\alpha$ is $K$-equivariant if and only if $\alpha$ is
$K$-invariant, that is, $\alpha(\Ad(g)\xi) = \alpha(\xi)$ for all
$g\in K$ and $\xi \in \Lie{K}$.  Thus, $\Phi_\alpha$ is a moment map
for the action of $K$ if and only if $\alpha$ is $K$-invariant.

Now, the map $\alpha\mapsto \Phi_\alpha$ is clearly injective.
Suppose $\Psi : V \to \Lie{K}^*$ is a moment map for the action of $K$
on $V$.  Then, for all $\xi\in \Lie{K}$, we have
$H(\Psi^\xi) = H(\Phi_0^\xi) = \xi^\sharp$, hence
$\tang[x]{\Psi^\xi} = \tang[x]{\Phi_0^\xi}$ for all $x\in X$; as $V$
is connected, this implies that $\Psi^\xi - \Phi_0^\xi = \alpha(\xi)$
for some $\alpha(\xi)\in \R$.  The function
$\alpha : \xi \mapsto \alpha(\xi)$ is obviously $\R$-linear, that is,
an element of $\Lie{K}^*$.  As we have seen above, it is necessarily
$K$-invariant.  Thus, the given map is surjective too.  \qed

\subsection{K\"ahler quotients}
\label{sec:kahler-quotients}

For any complex premanifold $X$ and $x\in X$, we will identify the
tangent space at $x$ of the underlying smooth manifold of $X$, with
the holomorphic tangent space $\htang[x]{X}$ of $X$ at $x$, using the
canonical $\R$-isomorphism between them.  With this identification,
for any holomorphic map $f: X \to Y$ of complex premanifolds, and
$x\in X$, the real differential of $f$ at $x$ is equal to the
$\C$-linear map $\htang[x]{f} : \htang[x]{X} \to \htang[f(x)]{Y}$,
considered as an $\R$-linear map.

Let $X$ be a complex manifold, $g$ a K\"ahler metric on $X$, and
$\Omega$ its K\"ahler form, that is, the closed real $2$-form on $X$
defined by $\Omega(x)(v,w) = -2 \Im(g(x)(v,w))$ for all $x\in X$, and
$v,w\in \tang[x]{X}$, where $\Im(t)$ denotes the imaginary part of a
complex number $t$.  Since $\Omega$ is positive, $(X,\Omega)$ is a
smooth symplectic manifold. The following Lemma follows directly from
the definition.

\begin{lemma}
  \label{lem:15}
  Let $B$ be the smooth Riemannian metric on $X$ defined by
  $B(x)(v,w) = 2\Re(g(x)(v,w))$, where $\Re(t)$ denotes the real part
  of a complex number $t$.  Then, for every point $x\in X$, and
  $\R$-subspace $W$ of $\tang[x]{X}$, we have
  $W^{\perp(\Omega)} = (\sqrt{-1}W)^{\perp(B)}$, where
  $S^{\perp(\Omega)}$ (respectively, $S^{\perp(B)}$) is the set of all
  elements of $\tang[x]{X}$ that are $\Omega(x)$-orthogonal
  (respectively, $B(x)$-orthogonal) to any subset $S$ of
  $\tang[x]{X}$.  In particular, we have an $\R$-vector space
  decomposition $\tang[x]{X} = W^{\perp(\Omega)} \oplus (\sqrt{-1}W)$.
\end{lemma}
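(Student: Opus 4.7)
The plan is to derive a pointwise identity linking $\Omega$ and $B$ directly from their definitions in terms of $g$, and then deduce the two assertions by elementary linear algebra.

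Fix $x \in X$, and write $g_x$, $B_x$, $\Omega_x$ for the values of $g$, $B$, $\Omega$ at $x$. Since $g_x$ is a Hermitian inner product on $\htang[x]{X}$ (that is, $\C$-linear in the first argument), we have $g_x(\sqrt{-1}\,v, w) = \sqrt{-1}\, g_x(v,w)$ for all $v, w$. Taking real parts gives $\Re(g_x(\sqrt{-1}\,v, w)) = -\Im(g_x(v,w))$, and multiplying by $2$ yields the key identity
$$
B_x(\sqrt{-1}\,v, w) = \Omega_x(v, w)
$$
for all $v,w \in \htang[x]{X}$. This is the only nontrivial computation in the proof, and is purely a matter of tracking the factor $2$ and the sign in the definitions of $B$ and $\Omega$.

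Using this identity, for any $\R$-subspace $W$ of $\htang[x]{X}$ and any $u \in \htang[x]{X}$, we have $u \in W^{\perp(\Omega)}$ iff $\Omega_x(w,u) = 0$ for all $w \in W$ iff $B_x(\sqrt{-1}\,w, u) = 0$ for all $w \in W$, which is precisely the condition $u \in (\sqrt{-1}\,W)^{\perp(B)}$. Hence $W^{\perp(\Omega)} = (\sqrt{-1}\,W)^{\perp(B)}$, proving the first assertion.

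For the direct sum decomposition, observe that $B_x$ is a positive-definite real inner product on the $\R$-vector space $\tang[x]{X}$, since $g_x$ is positive-definite. Therefore the standard orthogonal decomposition with respect to $B_x$ yields $\tang[x]{X} = (\sqrt{-1}\,W)^{\perp(B)} \oplus (\sqrt{-1}\,W)$, and substituting the identity $W^{\perp(\Omega)} = (\sqrt{-1}\,W)^{\perp(B)}$ just established gives the claimed decomposition $\tang[x]{X} = W^{\perp(\Omega)} \oplus (\sqrt{-1}\,W)$. There is no real obstacle here beyond unwinding definitions; the content of the lemma is the algebraic compatibility between the complex structure, the Hermitian form, and its real and imaginary parts.
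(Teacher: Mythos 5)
Your proof is correct and matches the paper's intent: the paper gives no argument beyond the remark that the lemma ``follows directly from the definition,'' and your identity $B(x)(\sqrt{-1}\,v,w)=\Omega(x)(v,w)$ together with the standard orthogonal decomposition for the positive-definite form $B(x)$ is precisely that unwinding. (The only convention-dependent point, which argument of $g$ is $\C$-linear, affects at most a sign in the identity and not the orthogonality statement, so your argument is robust.)
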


Let $G$ be a complex Lie group, and $K$ a compact subgroup of $G$; in
particular, $K$ is a real Lie subgroup of $G$.  Suppose that we are
given a holomorphic right action of $G$ on $X$, such that the induced
action of $K$ on $X$ preserves the K\"ahler metric $g$ on $X$.  Then,
the K\"ahler form $\Omega$ on $X$ is $K$-invariant, that is, the
action of $K$ on $X$ is symplectic.  Let $\Phi : X \to \Lie{K}^*$ be a
moment map for the action of $K$ on $X$, $\zm{X}$ the closed subset
$\Phi^{-1}(0)$ of $X$, and $\zms{X} = \zm{X}G$.  Then, $\zms{X}$ is
$G$-invariant, and, since $\Phi$ is $K$-equivariant, $\zm{X}$ is a
$K$-invariant subset of $\zms{X}$.  Denote by $Y$ the quotient
topological space $X/G$, and let $p : X \to Y$ be the canonical
projection.  Let $\zms{Y} = p(\zms{X})$,
$\zms{p} : \zms{X} \to \zms{Y}$ the map induced by $p$, and
$\zm{p} = \restrict{\zms{p}}{\zm{X}} : \zm{X} \to \zms{Y}$.

\begin{proposition}
  \label{pro:29}
  Suppose that the action of $G$ on $X$ is principal, and that
  \begin{equation*}
    \label{eq:312}
    P_G(\zm{X},\zm{X}) \subset K.
  \end{equation*}
  Then:
  \begin{enumerate}
  \item \label{item:77} The set $\zm{X}$ is a closed smooth
    submanifold of $X$, $\zms{X}$ is open in $X$, $\zms{Y}$ is open in
    $Y$, the action of $K$ on $\zm{X}$ is principal, and
    $\zm{p} : \zm{X} \to \zms{Y}$ is a smooth principal $K$-bundle.
  \item \label{item:78} The action of $G$ on $\zms{X}$ is proper,
    $\zms{Y}$ is a Hausdorff open subspace of $Y$, the action of $G$
    on $\zms{X}$ is principal, and $\zms{p} : \zms{X} \to \zms{Y}$ is
    a holomorphic principal $G$-bundle.
  \item \label{item:79} There exists a unique K\"ahler metric
    $\zms{h}$ on $\zms{Y}$, such that
    $\zm{p}^*(\zms{\Theta}) = \zm{\Omega}$, where $\zms{\Theta}$ is
    the K\"ahler form of $\zms{h}$, $\zm{\Omega} = \zm{i}^*(\Omega)$,
    and $\zm{i} : \zm{X} \to X$ is the inclusion map.
  \end{enumerate}
\end{proposition}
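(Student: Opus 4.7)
The plan is to handle the three assertions in turn, using as the main tool the decomposition $\tang[x]{X}=W^{\perp(\Omega)}\oplus\sqrt{-1}W$ of Lemma~\ref{lem:15} applied at each $x\in\zm{X}$ to the real subspace $W=\Img{\tang[e]{\nu_x}}$, where $\nu_x:K\to X$ is the $K$-orbit map at $x$.

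For assertion~(\ref{item:77}), since the $G$-action is principal, hence free, its restriction to $K$ is also free, so $K_x=\set{e}$ for every $x$. By the submersion criterion for moment maps recalled before Lemma~\ref{lem:14}, $\Phi$ is then a submersion at each point of $\zm{X}$, so $\zm{X}=\Phi^{-1}(0)$ is a closed smooth submanifold of $X$ with $\tang[x]{\zm{X}}=\Img{\tang[e]{\nu_x}}^{\perp(\Omega)}$. To show $\zms{X}=\zm{X}G$ is open, I will verify that the action map $a:\zm{X}\times G\to X$, $(x,g)\mapsto xg$, is submersive at every $(x,e)$: the image of its derivative equals $\tang[x]{\zm{X}}+\tang[e]{\mu_x}(\Lie{G})$, and since the $G$-action is holomorphic and $\sqrt{-1}\Lie{K}\subset\Lie{G}$, this image contains $\sqrt{-1}\Img{\tang[e]{\nu_x}}$, hence coincides with $\tang[x]{X}$ by Lemma~\ref{lem:15}. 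The fibre $\zm{p}^{-1}(p(x))$ equals $xK$ by the hypothesis $P_G(\zm{X},\zm{X})\subset K$, and the $K$-action on $\zm{X}$ inherits principality from the $G$-action by restriction of its translation map; Remark~\ref{rem:8} then makes $\zm{p}$ a smooth principal $K$-bundle.

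For assertion~(\ref{item:78}), openness of $\zms{Y}$ in $Y$ follows from the openness of $p$ (a holomorphic principal bundle by Proposition~\ref{pro:27}) applied to the open set $\zms{X}$. The $G$-action on the $G$-invariant open set $\zms{X}$ is principal as the restriction of a principal action, and Proposition~\ref{pro:27} then yields the holomorphic principal $G$-bundle structure of $\zms{p}$. For Hausdorffness of $\zms{Y}$, I will deduce properness of the $G$-action on $\zms{X}$ from the hypothesis: given $x_n\to x$ in $\zms{X}$ and $g_n\in G$ with $x_n g_n\to x'\in\zms{X}$, translating by fixed $G$-elements reduces the problem to the case where $x,x'\in\zm{X}$ and the points $x_n$, $x_n g_n$ lie in $\zm{X}$ for large $n$; then $g_n\in P_G(\zm{X},\zm{X})\subset K$, and compactness of $K$ produces a convergent subsequence. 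Properness then makes $\zms{Y}=\zms{X}/G$ Hausdorff.

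For assertion~(\ref{item:79}), the restricted form $\zm{\Omega}=\zm{i}^*(\Omega)$ is closed, $K$-invariant, and $K$-horizontal: for $\xi\in\Lie{K}$ and $v\in\tang[x]{\zm{X}}=\Img{\tang[e]{\nu_x}}^{\perp(\Omega)}$ the contraction $\Omega(x)(\tang[e]{\nu_x}(\xi),v)$ vanishes. Standard descent along the smooth principal $K$-bundle $\zm{p}$ then yields a unique closed real $2$-form $\zms{\Theta}$ on $\zms{Y}$ with $\zm{p}^*(\zms{\Theta})=\zm{\Omega}$. To exhibit $\zms{\Theta}$ as a K\"ahler form, I will take, at each $x\in\zm{X}$, the subspace $H_x\subset\tang[x]{\zm{X}}$ that is the $B(x)$-orthogonal complement of $\Img{\tang[e]{\nu_x}}$; by Lemma~\ref{lem:15}, $H_x$ is a complex subspace of $\tang[x]{X}$, and $\tang[x]{\zms{p}}$ restricts to a $\C$-linear isomorphism from $H_x$ onto $\htang[\zms{p}(x)]{\zms{Y}}$. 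Transporting the restriction of $g$ to $H_x$ through this isomorphism defines $\zms{h}$, whose associated $2$-form is $\zms{\Theta}$ by construction; closedness of $\zms{\Theta}$ then makes $\zms{h}$ K\"ahler. The hardest step will be this last construction, namely checking that $H_x$ is complex-stable, that the resulting metric is positive and smooth, and that it is independent of the choice of representative $x$ within each $K$-orbit; this is the point where one must exploit the full interplay between $\Omega$, the Riemannian metric $B$ of Lemma~\ref{lem:15}, and the almost complex structure on $X$, together with the $K$-invariance of $g$.
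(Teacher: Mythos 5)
Your treatment of assertions~(\ref{item:77}) and~(\ref{item:79}) is sound: part~(\ref{item:77}) follows the paper's argument essentially verbatim, and in part~(\ref{item:79}) your construction of $\zms{h}$ by pushing forward the restriction of $g$ to the horizontal subspaces $H_x=(\Img{\tang[e]{\nu_x}}+\sqrt{-1}\Img{\tang[e]{\nu_x}})^{\perp(B)}$ is a legitimate variant of the paper's route (the paper instead proves positivity of $\zms{\Theta}$ directly by lifting tangent vectors and computing $\zms{\Theta}(y)(w,\sqrt{-1}w)=\Omega(x)(v-\tang[e]{\nu_x}(\xi),\sqrt{-1}(v-\tang[e]{\nu_x}(\xi)))>0$, and then reads off $\zms{h}$ from $\zms{\Theta}$); both yield uniqueness because a K\"ahler metric is determined by its K\"ahler form and $\zms{\Theta}$ is the unique descent of $\zm{\Omega}$.

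There is, however, a genuine gap in assertion~(\ref{item:78}). Your properness argument claims that, given $x_n\to x$ in $\zms{X}$ and $g_n\in G$ with $x_ng_n\to x'$, ``translating by fixed $G$-elements'' reduces to the case where $x_n$ and $x_ng_n$ lie in $\zm{X}$ for large $n$. A fixed translation can arrange $x,x'\in\zm{X}$, but it cannot force the terms of the sequence into $\zm{X}$: $\zm{X}=\Phi^{-1}(0)$ is a closed submanifold of positive real codimension (equal to $\dim\Lie{K}$), so a sequence in $\zms{X}$ converging to a point of $\zm{X}$ need not eventually lie in $\zm{X}$, and each $x_n$ may require a \emph{different} group element to be moved there. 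Consequently you cannot conclude $g_n\in P_G(\zm{X},\zm{X})\subset K$, and since you derive Hausdorffness of $\zms{Y}$ from properness, that conclusion is also left unproved. Two repairs are available. You can salvage the sequential argument by using local sections $s$ of the principal $K$-bundle $\zm{p}:\zm{X}\to\zms{Y}$ (which are also local sections of $\zms{p}$) to write $x_n=s(\zms{p}(x_n))a_n$ and $x_ng_n=s'(\zms{p}(x_ng_n))b_n$ with $a_n,b_n$ converging by continuity of the translation map; then $a_ng_nb_n^{-1}\in P_G(\zm{X},\zm{X})\subset K$ and compactness of $K$ applies. Alternatively, and more economically, reverse the order of deductions as the paper does: the $K$-action on the Hausdorff space $\zm{X}$ is proper because $K$ is compact, so its graph $\zm{R}$ is closed in $\zm{X}\times\zm{X}$; since $\zm{p}\times\zm{p}$ is an open continuous surjection, hence a quotient map, and $\zm{R}=(\zm{p}\times\zm{p})^{-1}(\zms{\Delta})$, the diagonal $\zms{\Delta}$ is closed and $\zms{Y}$ is Hausdorff; then the graph $\zms{R}=(\zms{p}\times\zms{p})^{-1}(\zms{\Delta})$ of the $G$-action on $\zms{X}$ is closed, and combining this with the homeomorphism $\zms{\tau}:\zms{X}\times G\to\zms{R}$ coming from principality gives properness of the $G$-action.
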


\proof (\ref{item:77}) We will use the remarks and the notation in
Section~\ref{sec:moment-maps}.  Since the action of $G$ on $X$ is
free, we have $K_x=\set{e}$ for all $x\in X$, hence the moment map
$\Phi : X \to \Lie{K}^*$ is a submersion.  Therefore,
$\zm{X} = \Phi^{-1}(0)$ is a closed submanifold of $X$, and, for all
$x\in \zm{X}$, we have
$\tang[x]{\zm{X}} = \Ker{\tang[x]{\Phi}} =
\Img{\tang[e]{\nu_x}}^{\perp(\Omega)}$.

We will next check that $\zms{X}$ is open in $X$.  Let
$\zm{\mu} : \zm{X} \times G \to X$ be the restriction of the action
map $\mu : X \times G \to X$.  We claim that the smooth map $\zm{\mu}$
is a submersion.  For all $g\in G$, we have
$\zm{\mu}\circ (\id{\zm{X}}\times \rho_g) = \rho_g \circ \zm{\mu}$,
where $\rho_g$ denotes the translation by $g$ on any right $G$-space.
Therefore, it suffices to check that the $\R$-linear map
\begin{equation*}
  \label{eq:313}
  \tang[(x,e)]{\zm{\mu}} :
  \tang[x]{\zm{X}} \oplus \Lie{G} \to \tang[x]{X}
\end{equation*}
is surjective for every $x\in \zm{X}$.  For all
$w\in \tang[x]{\zm{X}}$ and $\xi\in \Lie{G}$, we have
\begin{equation*}
  \label{eq:314}
  \tang[(x,e)]{\zm{\mu}}(w,\xi) = \tang[(x,e)]{\mu}(w,\xi) =
  w + \xi^\sharp(x) = w + \tang[e]{\mu_x}(\xi),
\end{equation*}
where $\mu_x : G \to X$ is the orbit map of $X$.  Now, putting
$W = \Img{\tang[e]{\nu_x}}$ in Lemma~\ref{lem:15}, we get
\begin{equation*}
  \label{eq:315}
  \tang[x]{X} =
  \Img{\tang[e]{\nu_x}}^{\perp(\Omega)} \oplus
  (\sqrt{-1} \Img{\tang[e]{\nu_x}}) =
  \tang[x]{\zm{X}} \oplus (\sqrt{-1} \Img{\tang[e]{\nu_x}}).
\end{equation*}
Therefore, for each $u\in \tang[x]{X}$, there exist
$w\in \tang[x]{\zm{X}}$ and $\eta\in \Lie{K}$, such that
$u = w + \sqrt{-1} \tang[e]{\nu_x}(\eta)$.  But, since
$\nu_x : K \to X$ is the restriction of $\mu_x : G \to X$, we have
$\tang[e]{\nu_x}(\eta) = \tang[e]{\mu_x}(\eta)$.  Also, since $\mu_x$
is holomorphic, the map $\tang[e]{\mu_x} : \Lie{G} \to \tang[x]{X}$ is
$\C$-linear.  Therefore,
\begin{equation*}
  \label{eq:316}
  u =
  w + \sqrt{-1} \tang[e]{\mu_x}(\eta) =
  w + \tang[e]{\mu_x}(\sqrt{-1}\eta) =
  \tang[(x,e)]{\zm{\mu}}(w,\sqrt{-1}\eta).
\end{equation*}
This proves that $\tang[(x,e)]{\zm{\mu}}$ is surjective for all
$x\in \zm{X}$, hence $\zm{\mu}$ is a submersion.  Therefore, it is an
open map.  In particular,
$\zms{X} = \zm{X}G = \zm{\mu}(\zm{X}\times G)$ is open in $X$.

The map $p : X \to Y$ is the quotient map for a continuous action of a
topological group, and is hence an open map.  Therefore, as $\zms{X}$
is open in $X$, $\zms{Y} = p(\zms{X})$ is open in $Y$.  Moreover, as
$\zms{X}$ is $G$-invariant, we have
$\zms{X} = p^{-1}(p(\zms{X})) = p^{-1}(\zms{Y})$.  Now, by
Proposition~\ref{pro:27}, there exists a unique structure of a complex
premanifold on $Y$, such that $p$ is a holomorphic submersion;
moreover, this structure makes $p$ a holomorphic principal $G$-bundle.
Therefore, $\zms{p}$ is also a holomorphic principal $G$-bundle.

The map $\zm{p} : \zm{X} \to \zms{Y}$ is obviously smooth.  It is
surjective, because
$\zms{Y} = p(\zms{X}) = p(\zm{X}G) = p(\zm{X}) = \zm{p}(\zm{X})$.  We
will now check that it is a submersion.  Let $x\in \zm{X}$, and
$w\in \tang[p(x)]{Y}$.  Then, since $p : X \to Y$ is a holomorphic
submersion, there exists $v\in \tang[x]{X}$, such that
$\tang[x]{p}(v) = w$.  As
$\tang[x]{X} = \tang[x]{\zm{X}} \oplus (\sqrt{-1}
\Img{\tang[e]{\nu_x}})$, there exist $\zm{v} \in \tang[x]{\zm{X}}$ and
$\xi \in \Lie{K}$, such that $v=\zm{v}+\sqrt{-1}\tang[e]{\nu_x}(\xi)$.
Now, $\tang[e]{\nu_x}(\xi) = \tang[e]{\mu_x}(\xi)$ belongs to the
$\C$-subspace
$\Img{\tang[e]{\mu_x}} = \tang[x]{xG} = \Ker{\tang[x]{p}}$ of
$\tang[x]{X}$, hence $\tang[x]{p}(\sqrt{-1}\tang[e]{\nu_x}(\xi)) = 0$.
Therefore, $w = \tang[x]{p}(\zm{v}) = \tang[x]{\zm{p}}(\zm{v})$.  This
proves that $\zm{p}$ is a submersion.  The condition
$P_G(\zm{X},\zm{X}) \subset K$, and the $K$-invariance of $\zm{p}$,
imply that $\zm{p}^{-1}(\zm{p}(x)) = xK$ for all $x\in \zm{X}$.
Lastly, if $R$ is the graph of the action of $G$ on $X$, $\zm{R}$ that
of the action of $K$ on $\zm{X}$, and $\phi : R \to G$ and
$\zm{\phi} : \zm{R} \to K$ the translation maps, then
$\zm{R} \subset R$, and $\zm{\phi}$ is induced by $\phi$.  As the
action of $G$ on $X$ is principal, $\phi$ is continuous, hence so is
$\zm{\phi}$, so the action of $K$ on $\zm{X}$ is also principal.  Now,
by Remark~\ref{rem:8}, $\zm{p}$ is a smooth principal $K$-bundle.

(\ref{item:78}) As $\zm{p} : \zm{X} \to \zms{Y}$ is a smooth principal
$G$-bundle, it is an open map.  Therefore,
$\zm{p} \times \zm{p} : \zm{X} \times \zm{X} \to \zms{Y} \times
\zms{Y}$ is also an open map.  Since it is also a continuous
surjection, it is a quotient map.  Now,
$\zm{R} = (\zm{p} \times \zm{p})^{-1}(\zms{\Delta})$, where $\zm{R}$
is the graph of the action of $K$ on $\zm{X}$, and $\zms{\Delta}$ is
the diagonal of $\zms{Y}$.  Since $K$ is compact and $\zm{X}$ is
Hausdorff, the action of $K$ on $\zm{X}$ is proper, hence $\zm{R}$ is
closed in $\zm{X} \times \zm{X}$.  Therefore, $\zms{\Delta}$ is closed
in $\zms{Y} \times \zms{Y}$, so $\zms{Y}$ is Hausdorff.  The graph
$\zms{R}$ of the action of $G$ on $\zms{X}$ equals
$(\zms{p}\times \zms{p})^{-1}(\zms{\Delta})$, and is hence closed in
$\zms{X}\times \zms{X}$.  Moreover, $\zms{R}$ is contained in the
graph $R$ of the action of $G$ on $X$, and the translation map
$\zms{\phi} : \zms{R} \to G$ is the restriction of the translation map
$\phi : R \to G$.  As the action of $G$ on $X$ is principal, this
implies that the action of $G$ on $\zms{X}$ is also principal.  Let
$\zms{\sigma} : \zms{X}\times G \to \zms{X}\times \zms{X}$ be the map
$(x,g)\mapsto (x,xg)$, and let
$\zms{\tau} : \zms{X}\times G \to \zms{R}$ be the map induced by
$\zms{\sigma}$.  Then, by Remark (\ref{rem:7}), $\zms{\tau}$ is a
homeomorphism.  Since $\zms{R}$ is closed in $\zms{X}\times \zms{X}$, it
follows that the map $\zms{\sigma}$ is proper.  In other words, the
action of $G$ on $\zms{X}$ is proper.  It has been proved above that
$\zms{Y}$ is open in $Y$, and $\zms{p}$ is a holomorphic principal
$G$-bundle.

(\ref{item:79}) By hypothesis, the K\"ahler metric $g$ on $X$ is
$K$-invariant, hence its K\"ahler form $\Omega$ is $K$-invariant.
Therefore, its restriction $\zm{\Omega}$ to the $K$-invariant smooth
submanifold $\zm{X}$ of $X$ is also $K$-invariant.  Let $x\in \zm{X}$,
$v\in \tang[x]{\zm{X}}$, and $\xi\in \Lie{K}$.  Then, since
$\tang[x]{\zm{X}} = \Img{\tang[e]{\nu_x}}^{\perp(\Omega)}$, we have
$\zm{\Omega}(x)(v,\tang[e]{\nu_x}(\xi)) =
\Omega(x)(v,\tang[e]{\nu_x}(\xi)) = 0$.  Therefore, $\Omega(x)(v,w)=0$
if either $v$ or $w$ is a vertical tangent vector at $x$ for the
principal $K$-bundle $\zm{p} : \zm{X} \to \zms{Y}$.  It follows that
there exists a unique smooth $2$-form $\zms{\Theta}$ on $\zms{Y}$,
such that $\zm{p}^*(\zms{\Theta}) = \zm{\Omega}$.  As $\Omega$ is
closed, so is $\zm{\Omega}$, and hence so is $\zms{\Theta}$.

We claim that $\zms{\Theta}$ is positive.  Let $y\in \zms{Y}$, and
$w,w' \in \tang[y]{Y}$.  Let $x\in \zm{p}^{-1}(y)$.  Then, there
exists $v,v'\in \tang[x]{\zm{X}}$, such that $w = \tang[x]{\zm{p}}(v)$
and $w' = \tang[x]{\zm{p}}(v')$.  Since
$\tang[x]{X} = \tang[x]{\zm{X}} \oplus (\sqrt{-1}
\Img{\tang[e]{\nu_x}})$, there exist $a,a'\in \tang[x]{\zm{X}}$ and
$\xi,\xi'\in \Lie{K}$, such that
$\sqrt{-1}v = a + \sqrt{-1}\tang[e]{\nu_x}(\xi)$ and
$\sqrt{-1}v' = a' + \sqrt{-1}\tang[e]{\nu_x}(\xi')$.  As $\tang[x]{p}$
is $\C$-linear,
\begin{align*}
  \sqrt{-1}w
  &= \sqrt{-1}\tang[x]{\zm{p}}(v) =
    \sqrt{-1}\tang[x]{p}(v) =
    \tang[x]{p}(\sqrt{-1}v) \\
  &=
    \tang[x]{p}(a + \sqrt{-1}\tang[e]{\nu_x}(\xi)) =
    \tang[x]{p}(a) + \sqrt{-1}\tang[x]{p}(\tang[e]{\nu_x}(\xi)) \\
  &=
    \tang[x]{\zm{p}}(a) +
    \sqrt{-1}\tang[x]{p}(\tang[e]{\mu_x}(\xi)) =
    \tang[x]{\zm{p}}(a).
\end{align*}
Similarly, $\sqrt{-1}w' = \tang[x]{\zm{p}}(a')$.  Therefore, as
$\Omega(x)$ vanishes on vertical tangent vectors for $\zm{p}$,
\begin{align*}
  \zms{\Theta}(y)(\sqrt{-1}w,\sqrt{-1}w')
  &=
    \zms{\Theta}(y)(\tang[x]{\zm{p}}(a),\tang[x]{\zm{p}}(a')) =
    \zm{\Omega}(x)(a,a') =
    \Omega(x)(a,a') \\
  &=
    \Omega(x)(\sqrt{-1}(v-\tang[e]{\nu_x}(\xi)),
    \sqrt{-1}(v'-\tang[e]{\nu_x}(\xi'))) \\
  &=
    \Omega(x)(v-\tang[e]{\nu_x}(\xi),
    v'-\tang[e]{\nu_x}(\xi')) \\
  &=
    \zm{\Omega}(x)(v-\tang[e]{\nu_x}(\xi),
    v'-\tang[e]{\nu_x}(\xi')) \\
  &=
    \zm{\Omega}(x)(v,v') = \zms{\Theta}(y)(w,w').
\end{align*}
Similarly, it can be checked that
\begin{align*}
  \zms{\Theta}(y)(w,\sqrt{-1}w)
  = \Omega(x)(v-\tang[e]{\nu_x}(\xi),
  \sqrt{-1}(v-\tang[e]{\nu_x}(\xi))).
\end{align*}
Now, if $w = \tang[x]{\zm{p}}(v)$ is non-zero, then
$v\neq \tang[e]{\nu_x}(\xi)$, hence, as $\Omega(x)$ is positive,
$\Omega(x)(v-\tang[e]{\nu_x}(\xi), \sqrt{-1}(v-\tang[e]{\nu_x}(\xi)))
> 0$.  Therefore, $\zms{\Theta}(y)(w,\sqrt{-1}w) > 0$.  It follows
that $\zms{\Theta}$ is positive.  Thus, the rule
\begin{equation*}
  \label{eq:317}
  \zms{h}(y)(w,w') =
  \frac{1}{2} \bigl(
  \zms{\Theta}(y)(w,\sqrt{-1}w') - \sqrt{-1} \zms{\Theta}(y)(w,w')
  \bigr),
\end{equation*}
for all $y\in \zms{Y}$ and $w,w'\in \tang[y]{Y}$, defines a K\"ahler
metric on $\zms{Y}$, whose K\"ahler form equals $\zms{\Theta}$.  If
$h$ is another K\"ahler metric on $\zms{Y}$, whose K\"ahler form
$\Theta$ satisfies the condition $\zm{p}^*(\Theta) = \zm{\Omega}$,
then, since $\zms{\Theta}$ is the unique smooth $2$-form on $\zms{Y}$
such that $\zm{p}^*(\zms{\Theta}) = \zm{\Omega}$, we get
$\Theta = \zms{\Theta}$.  Therefore,
\begin{equation*}
  \label{eq:318}
  h(y)(w,w') =
  \frac{1}{2} \bigl(
  \Theta(y)(w,\sqrt{-1}w') - \sqrt{-1} \Theta(y)(w,w') \bigr) =
  \zms{h}(y)(w,w')
\end{equation*}
for all $y\in \zms{Y}$ and $w,w'\in \tang[y]{Y}$.  This establishes
the uniqueness of $\zms{h}$.  \qed


In addition to the notations used above, let $H$ be a normal complex
Lie subgroup of $G$, $\bar{G}$ the complex Lie group $H\backslash G$,
and $\pi: G \to \bar{G}$ the canonical projection.  Let $\bar{K}$ be
the compact subgroup $\pi(K)$ of $\bar{G}$, and
$\pi_K : K \to \bar{K}$ the homomorphism of real Lie groups induced by
$\pi$.  The subset $H\cap K$ of $G$ is a real Lie subgroup of $G$, and
$\Lie{H\cap K}$ equals the real Lie subalgebra $\Lie{H}\cap \Lie{K}$
of $\Lie{G}$.  The map $\tang[e]{\pi} : \Lie{G} \to \Lie{\bar{G}}$ is
a surjective homomorphism of complex Lie algebras with kernel
$\Lie{H}$, and $\tang[e]{\pi_K} : \Lie{K} \to \Lie{\bar{K}}$ is a
surjective homomorphism of real Lie algebras with kernel
$\Lie{H\cap K}$.

\begin{corollary}
  \label{cor:4}
  Suppose that $G_x=H$ for all $x\in X$, that the induced action of
  $\bar{G}$ on $X$ is principal, and that
  \begin{equation*}
    \label{eq:319}
    \Phi(X)\subset \Ann{\Lie{H\cap K}}, \quad
    P_G(\zm{X},\zm{X})\subset HK.
  \end{equation*}
  Then:
  \begin{enumerate}
  \item \label{item:80} The set $\zm{X}$ is a closed smooth
    submanifold of $X$, $\zms{X}$ is open in $X$, $\zms{Y}$ is open in
    $Y$, the action of $\bar{K}$ on $\zm{X}$ is principal, and
    $\zm{p} : \zm{X} \to \zms{Y}$ is a smooth principal
    $\bar{K}$-bundle.
  \item \label{item:81} The action of $\bar{G}$ on $\zms{X}$ is
    proper, $\zms{Y}$ is a Hausdorff open subspace of $Y$, the action
    of $\bar{G}$ on $\zms{X}$ is principal, and
    $\zms{p} : \zms{X} \to \zms{Y}$ is a holomorphic principal
    $\bar{G}$-bundle.
  \item \label{item:82} There exists a unique K\"ahler metric
    $\zms{h}$ on $\zms{Y}$, such that
    $\zm{p}^*(\zms{\Theta}) = \zm{\Omega}$, where $\zms{\Theta}$ is
    the K\"ahler form of $\zms{h}$, $\zm{\Omega} = \zm{i}^*(\Omega)$,
    and $\zm{i} : \zm{X} \to X$ is the inclusion map.
  \end{enumerate}
\end{corollary}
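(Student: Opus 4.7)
The plan is to reduce the statement to Proposition~\ref{pro:29} applied to the induced principal action of $\bar{G}$ on $X$ with its compact subgroup $\bar{K}$. Once this reduction is set up, all three conclusions of the corollary become direct translations of the corresponding conclusions of Proposition~\ref{pro:29}, since $X/\bar{G} = X/G = Y$, $\zms{Y}$ and $\zm{X}$ are the same subsets in either formulation, and the quotient projection $p$ is unchanged.

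The first ingredient I would assemble is the compact group $\bar{K}$: it is the continuous image of the compact group $K$ under $\pi$, so it is a compact subgroup of $\bar{G}$, and the induced right action of $\bar{K}$ on $X$ is smooth, symplectic, and preserves the K\"ahler metric $g$ (these properties pass to the quotient because the action of $\bar{K}$ on $X$ factors through $K$ via $\pi_K : K \to \bar{K}$). The holomorphicity of the $\bar{G}$-action on $X$ is given, and its principality is a hypothesis of the corollary.

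The key step, which I expect to be the main point requiring verification, is the construction of a moment map $\bar{\Phi} : X \to \Lie{\bar{K}}^*$ for the $\bar{K}$-action. Since $\tang[e]{\pi_K} : \Lie{K} \to \Lie{\bar{K}}$ is surjective with kernel $\Lie{H\cap K}$, its transpose identifies $\Lie{\bar{K}}^*$ with the subspace $\Ann{\Lie{H\cap K}}$ of $\Lie{K}^*$. The hypothesis $\Phi(X)\subset \Ann{\Lie{H\cap K}}$ therefore allows one to define $\bar{\Phi}$ by declaring $\bar{\Phi}(x)(\bar{\xi}) = \Phi(x)(\xi)$ whenever $\tang[e]{\pi_K}(\xi)=\bar{\xi}$. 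I would then check that $\bar{\Phi}$ is smooth and $\bar{K}$-equivariant (immediate from the $K$-equivariance of $\Phi$), and that for every $\bar{\xi}\in \Lie{\bar{K}}$ with lift $\xi\in \Lie{K}$, the vector field $\bar{\xi}^\sharp$ on $X$ induced by the $\bar{K}$-action coincides with $\xi^\sharp$, while $\bar{\Phi}^{\bar{\xi}} = \Phi^\xi$, so that $H(\bar{\Phi}^{\bar{\xi}}) = H(\Phi^\xi) = \xi^\sharp = \bar{\xi}^\sharp$. Thus $\bar{\Phi}$ is a moment map for the $\bar{K}$-action, and $\bar{\Phi}^{-1}(0) = \Phi^{-1}(0) = \zm{X}$ because $0\in \Lie{\bar{K}}^*$ corresponds to $0\in \Ann{\Lie{H\cap K}}$.

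With these preliminaries, I would finish by verifying the final hypothesis of Proposition~\ref{pro:29} for the $\bar{G}$-action, namely $P_{\bar{G}}(\zm{X},\zm{X}) \subset \bar{K}$: if $\bar{g}\in \bar{G}$ moves some point of $\zm{X}$ into $\zm{X}$, then any lift $g\in G$ lies in $P_G(\zm{X},\zm{X})\subset HK$, so $\bar{g}=\pi(g)\in \pi(HK)=\pi(K)=\bar{K}$. Now Proposition~\ref{pro:29} applied to the data $(\bar{G},\bar{K},X,\bar{\Phi})$ produces assertions (\ref{item:80}), (\ref{item:81}), (\ref{item:82}) verbatim: $\zm{X}$ is closed and smooth in $X$, $\zms{X}$ and $\zms{Y}$ are open, $\zm{p}$ is a smooth principal $\bar{K}$-bundle, the $\bar{G}$-action on $\zms{X}$ is proper and principal, $\zms{Y}$ is Hausdorff, $\zms{p}$ is a holomorphic principal $\bar{G}$-bundle, and the unique K\"ahler metric $\zms{h}$ on $\zms{Y}$ with $\zm{p}^*(\zms{\Theta})=\zm{\Omega}$ exists. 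The main obstacle is making sure the identification $\Lie{\bar{K}}^*\cong \Ann{\Lie{H\cap K}}$ is handled cleanly so that $\bar{\Phi}$ is genuinely a moment map and $\bar{\Phi}^{-1}(0)=\zm{X}$; once that is in place, everything else is a direct appeal to the previously proved proposition.
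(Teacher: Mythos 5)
Your proposal is correct and follows essentially the same route as the paper: descend $\Phi$ to a moment map $\bar{\Phi}$ for the $\bar{K}$-action via the identification $\Lie{\bar{K}}^*\cong \Ann{\Lie{H\cap K}}$, check $\bar{\Phi}^{-1}(0)=\zm{X}$ and $P_{\bar{G}}(\zm{X},\zm{X})\subset\bar{K}$, and invoke Proposition~\ref{pro:29}. The paper's proof is a condensed version of exactly these steps, so nothing is missing.
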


\proof The action of $\bar{K}$ on $X$ induced by that of $\bar{G}$ on
$X$ preserves the K\"ahler metric $g$ on $X$.  Since
$\Phi(X) \subset \Ann{\Lie{H\cap K}}$ and
$\Ker{\tang[e]{\pi_K}} = \Lie{H\cap K}$, there exists a unique map
$\bar{\Phi} : X \to \Lie{\bar{K}}^*$, such that
$\Phi(x) = \bar{\Phi}(x) \circ \tang[e]{\pi_K}$ for all $x\in X$.  It
is easy to see that $\bar{\Phi}$ is a moment map for the action of
$\bar{K}$ on $X$.  Moreover,
$\bar{\Phi}^{-1}(0) = \Phi^{-1}(0) = \zm{X}$,
$\bar{\Phi}^{-1}(0)\bar{G} = \zm{X}G = \zms{X}$, and
$P_{\bar{G}}(\zm{X},\zm{X}) \subset \pi(P_G(\zm{X},\zm{X})) \subset
\pi(HK) \subset \pi(K) = \bar{K}$.  It is obvious that
$X/\bar{G} = X/G = Y$, and the canonical projection from $X$ to
$X/\bar{G}$ equals $p$.  Therefore, the Corollary follows from
Proposition~\ref{pro:29}.  \qed

\subsection{The K\"ahler metric on the moduli of stable
  representations}
\label{sec:kahler-metric-moduli-1}

We will follow the notation of Section~\ref{sec:compl-prem-schur}.
Thus, $Q$ is a non-empty finite quiver, $d=(d_a)_{a\in Q_0}$ a
non-zero element of $\N^{Q_0}$, and $V=(V_a)_{a\in Q_0}$ a family of
$\C$-vector spaces, such that $\dim[\C]{V_a} = d_a$ for all
$a\in Q_0$.  Fix a family $h=(h_a)_{a\in Q_0}$ of Hermitian inner
products $h_a : V_a \times V_a \to \C$.  In additon, we also fix now a
rational weight $\theta \in \Q^{Q_0}$ of $Q$.

Denote by $\mathcal{A}$ the finite-dimensional $\C$-vector space
$\bigoplus_{\alpha\in Q_1}\Hom[\C]{V_{\sr(\alpha)}}{V_{\tg(\alpha)}}$.
For any subset $X$ of $\mathcal{A}$, let $\schur{X}$ (respectively,
$\stab{X}$) denote the set of all points $\rho$ in $X$, such that the
representation $(V,\rho)$ of $Q$ is Schur (respectively,
$\theta$-stable).  Also, denote by $\eh{X}$ (respectively, $\irr{X}$)
the set of all $\rho\in X$, such that the Hermitian metric $h$ on
$(V,\rho)$ is Einstein-Hermitian with respect to $\theta$
(respectively, irreducible).

Recall that $G$ is the complex Lie group
$\prod_{a\in Q_0}\Aut[\C]{V_a}$, with its canonical holomorphic linear
right action on $\mathcal{A}$.  Denote by $H$ the central complex Lie
subgroup $\units{\C}e$ of $G$, $\bar{G}$ the complex Lie group
$H\backslash G$, and $\pi : G \to \bar{G}$ the canonical projection.
Let $K$ denote the compact subgroup $\prod_{a\in Q_0} \Aut{V_a,h_a}$,
where, for each $a\in Q_0$, $\Aut{V_a,h_a}$ is the subgroup of
$\Aut[\C]{V_a}$ consisting of $\C$-automorphisms of $V_a$ which
preserve the Hermitian inner product $h_a$ on $V_a$.  Let $\bar{K}$ be
the compact subgroup $\pi(K)$ of $\bar{G}$, and
$\pi_K : K \to \bar{K}$ the homomorphism of real Lie groups induced by
$\pi$.

Let $\mathcal{B} = \schur{\mathcal{A}}$.  Then, by
Proposition~\ref{pro:2}(\ref{item:24}),
$\stab{\mathcal{B}} = \stab{\mathcal{A}}$.  On the other hand, by
Proposition~\ref{pro:16},
$\eh{\mathcal{B}} = \eh{\mathcal{A}} \cap \irr{\mathcal{A}} =
\eh{\mathcal{A}} \cap \stab{\mathcal{A}} = \eh{\mathcal{A}} \cap
\stab{\mathcal{B}}$.  As noted in Section~\ref{sec:compl-prem-schur},
$\mathcal{B}$ is a $G$-invariant open complex submanifold of
$\mathcal{A}$, and, by Proposition~\ref{pro:2}(\ref{item:25}),
$G_\rho = H$ for all $\rho\in \mathcal{B}$.  Let $M$ denote the moduli
space $\mathcal{B}/G$ of Schur representations of $Q$ with dimension
vector $d$, and $p : \mathcal{B} \to M$ the canonical projection.  It
was proved in Theorem~\ref{thm:1} that the action of $\bar{G}$ on
$\mathcal{B}$ is principal, that there exists a unique structure of a
complex premanifold on $M$ such that $p$ is a holomorphic submersion,
and that this structure in fact makes $p$ a holomorphic principal
$\bar{G}$-bundle.  Let $\stab{M} = p(\stab{\mathcal{B}})$,
$\stab{p} : \stab{\mathcal{B}} \to \stab{M}$ the map induced by $p$,
and
$\eh{p} = \restrict{\stab{p}}{\eh{\mathcal{B}}} : \eh{\mathcal{B}} \to
\stab{M}$.  Recall that for any two subsets $A$ and $B$ of
$\mathcal{A}$, $P_G(A,B)$ denotes the set of all $g\in G$, such that
$Ag\cap B \neq \emptyset$.

\begin{lemma}
  \label{lem:16}
  We have $\eh{\mathcal{B}}G = \stab{\mathcal{B}}$ and
  $P_G(\eh{\mathcal{B}},\eh{\mathcal{B}}) \subset HK$.
\end{lemma}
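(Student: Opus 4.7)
I will prove the two assertions separately; the common tools are Proposition~\ref{pro:14} and its uniqueness clause, combined with Proposition~\ref{pro:16}.

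For $\eh{\mathcal{B}}G = \stab{\mathcal{B}}$, the inclusion $\eh{\mathcal{B}} \subset \stab{\mathcal{B}}$ is immediate from Proposition~\ref{pro:16}: any $\rho\in \eh{\mathcal{B}}$ lies in $\mathcal{B}$, so $(V,\rho)$ is Schur, and it carries the Einstein-Hermitian metric $h$, hence is $\theta$-stable. Since $G$-translation preserves isomorphism class of representations, Schur-ness and $\theta$-stability are $G$-invariant, and this upgrades to $\eh{\mathcal{B}}G \subset \stab{\mathcal{B}}$. For the reverse containment I take $\rho\in \stab{\mathcal{B}}$ and use Proposition~\ref{pro:14} to produce an Einstein-Hermitian metric $h'$ on $(V,\rho)$ with respect to $\theta$. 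Since any two Hermitian inner products on each $V_a$ are related by an invertible linear map, I can choose $g\in G$ so that $h'_a(v,w) = h_a(g_a^{-1}v, g_a^{-1}w)$ for all $a\in Q_0$ and all $v,w\in V_a$. Setting $\sigma = \rho g$, the componentwise map $g:(V,\sigma)\to (V,\rho)$ is an isomorphism of representations, and a direct substitution into the defining formula for $K_\theta$ shows that the Einstein-Hermitian condition for $h'$ on $(V,\rho)$ translates into the Einstein-Hermitian condition for $h$ on $(V,\sigma)$. Hence $\sigma\in \eh{\mathcal{B}}$, and $\rho = \sigma g^{-1}\in \eh{\mathcal{B}}G$.

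For $P_G(\eh{\mathcal{B}},\eh{\mathcal{B}})\subset HK$, suppose $g\in G$ satisfies $\rho,\rho g\in \eh{\mathcal{B}}$. Then $g:(V,\rho g)\to (V,\rho)$ is an isomorphism of representations, and transporting the Einstein-Hermitian metric $h$ from $(V,\rho g)$ along $g^{-1}$ produces a second metric $h''$ on $(V,\rho)$, given by $h''_a(v,w) = h_a(g_a^{-1}v,g_a^{-1}w)$, which is still Einstein-Hermitian with respect to $\theta$ by the same substitution argument as above. Applying the uniqueness clause of Proposition~\ref{pro:14} to the pair $h,h''$ on $(V,\rho)$ yields an automorphism $f$ of $(V,\rho)$ with
\begin{equation*}
  h_a(v,w) = h''_a(f_a v, f_a w) = h_a(g_a^{-1}f_a v,\, g_a^{-1}f_a w),
\end{equation*}
which forces $g^{-1}f\in K$. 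Since $\rho\in \mathcal{B}$, the representation $(V,\rho)$ is Schur, so Proposition~\ref{pro:2}(\ref{item:25}) gives $f = \lambda e$ for some $\lambda\in \units{\C}$, i.e., $f\in H$. Writing $g = f(g^{-1}f)^{-1}$ now exhibits $g$ as an element of $HK$.

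The main obstacle is verifying the invariance of the Einstein-Hermitian equation under the gauge transformation $g$: namely, that $K_\theta(V,\rho,h')_a = \mu_\theta(V,\rho)\id{V_a}$ transforms into $K_\theta(V,\sigma,h)_a = \mu_\theta(V,\sigma)\id{V_a}$ when $\sigma_\alpha = g_{\tg(\alpha)}^{-1}\rho_\alpha g_{\sr(\alpha)}$ and the adjoints are computed with respect to $h$ rather than $h'$. This is only an unwinding of definitions, noting that $\mu_\theta$ is an isomorphism invariant, but it is the one place where one must actually compute rather than cite an earlier result.
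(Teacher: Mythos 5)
Your proof is correct and follows essentially the same route as the paper's: Proposition~\ref{pro:14} (existence) plus the gauge-covariance of $K_\theta$ under the isomorphism $g$ for the equality $\eh{\mathcal{B}}G = \stab{\mathcal{B}}$, and transport of the metric followed by the uniqueness clause of Proposition~\ref{pro:14} and the Schur property (Proposition~\ref{pro:2}(\ref{item:25})) to factor $g$ through $HK$. The only differences are notational (which point is named $\rho$ versus $\sigma$, and whether the transported metric is written with $g$ or $g^{-1}$), and the covariance computation you defer is exactly the adjoint-transformation identity the paper records explicitly.
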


\proof It is obvious that the subset $\stab{\mathcal{B}}$ of
$\mathcal{B}$ is $G$-invariant.  By the above paragraph,
$\eh{\mathcal{B}} \subset \stab{\mathcal{B}}$.  Therefore,
$\eh{\mathcal{B}}G \subset \stab{\mathcal{B}}$.  Conversely, if
$\sigma\in \stab{\mathcal{B}}$, then, by Proposition~\ref{pro:14},
$(V,\sigma)$ has an Einstein-Hermitian metric $k$ with respect to
$\theta$.  For each $a\in Q_0$, $h_a$ and $k_a$ are two Hermitian
inner products on $V_a$, hence there exists a $\C$-automorphism $g_a$
of $V_a$, such that $h_a(g_a(x),g_a(y)) = k_a(x,y)$ for all
$x,y\in V_a$.  We thus get an element $g = (g_a)_{a\in Q_0}$ of $G$.
Let $\rho = \sigma g^{-1}$.  Then, for all $\alpha\in Q_1$, we have
$\rho_\alpha^{*(h)} = g_{\sr(\alpha)} \circ \sigma_\alpha^{*(k)} \circ
g_{\tg(\alpha)}^{-1}$, where $\rho_\alpha^{*(h)}$ is the adjoint of
$\rho_\alpha$ with respect to $h_{\sr(\alpha)}$ and $h_{\tg(\alpha)}$,
and $\rho_\alpha^{*(k)}$ the adjoint of $\rho_\alpha$ with respect to
$k_{\sr(\alpha)}$ and $k_{\tg(\alpha)}$.  Therefore, for every
$a\in Q_0$, we get
\begin{equation*}
  \label{eq:320}
  K_\theta(V,\rho,h)_a =
  g_a \circ K_\theta(V,\sigma,k) \circ g_a^{-1} =
  \mu_\theta(d)\id{V_a}.
\end{equation*}
Thus, the Hermitian metric $h$ on $(V,\rho)$ is Einstein-Hermitian, so
$\rho \in \eh{\mathcal{B}}$, and $\sigma = \rho g$ belongs to
$\eh{\mathcal{B}} G$.  This proves that
$\eh{\mathcal{B}}G = \stab{\mathcal{B}}$.

Next, let $g\in P_G(\eh{\mathcal{B}},\eh{\mathcal{B}})$.  Then, there
exist $\rho,\sigma\in \eh{\mathcal{B}}$, such that $\sigma = \rho g$.
Then, $g$ is an isomorphism from $(V,\sigma)$ to $(V,\rho)$.  For
every $a\in Q_0$, define a Hermitian inner product $k_a$ on $V_a$ by
$k_a(x,y) = h_a(g_a(x),g_a(y))$ for all $x,y\in V_a$.  Then, as
observed above, since $\sigma\in \eh{\mathcal{B}}$, we have
\begin{equation*}
  \label{eq:321}
  K_\theta(V,\rho,k)_a =
  g_a^{-1} \circ K_\theta(V,\sigma,h) \circ g_a =
  \mu_\theta(d)\id{V_a}
\end{equation*}
for all $a\in Q_0$, hence the Hermitian metric $k = (k_a)_{a\in Q_0}$
on $(V,\rho)$ is Einstein-Hermitian with respect to $\theta$.  As
$\rho\in \eh{\mathcal{B}}$, the Hermitian metric $h$ on $(V,\rho)$ is
also Einstein-Hermitian with respect to $\theta$.  Therefore, by
Proposition~\ref{pro:14}, there exists an automorphism $f$ of
$(V,\rho)$, such that $k_a(x,y) = h_a(f_a(x),f_a(y))$ for all
$a\in Q_0$, and $x,y\in V_a$.  Now, since $\rho\in \mathcal{B}$,
$f=ce$ for some $c\in \C$.  As the dimension vector $d$ is non-zero,
we have $c\neq 0$, and
\begin{equation*}
  \label{eq:322}
  h_a \Bigl( \frac{1}{c}g_a(x),\frac{1}{c}g_a(y) \Bigr) =
  k_a \Bigl( \frac{1}{c}x,\frac{1}{c}y \Bigr) =
  h_a \Bigl( \frac{1}{c}f_a(x),\frac{1}{c}f_a(y) \Bigr) =
  h_a(x,y)
\end{equation*}
for all $a\in Q_0$, and $x,y\in V_a$.  Therefore,
$\frac{1}{c}g_a\in \Aut{V_a,h_a}$ for all $a\in Q_0$, so
$\frac{1}{c}g\in K$.  Thus, $g=(ce)\bigl(\frac{1}{c}g\bigr)$ belongs
to $HK$.  It follows that
$P_G(\eh{\mathcal{B}},\eh{\mathcal{B}}) \subset HK$.  \qed

The family $h$ induces a Hermitian inner product $\pair{\cdot}{\cdot}$
on the $\C$-vector space $\mathcal{A}$, and the complex Lie algebra
$\Lie{G}$.  For every point $\rho$ of $\mathcal{A}$, the $\C$-vector
space $\tang[\rho]{\mathcal{A}}$ is canonically isomorphic to
$\mathcal{A}$.  Therefore, the Hermitian inner product
$\pair{\cdot}{\cdot}$ on $\mathcal{A}$ defines a Hermitian metric $g$
on the complex manifold $\mathcal{A}$, namely
$g(\rho)(\sigma,\tau) = \pair{\sigma}{\tau}$ for all
$\rho,\sigma,\tau\in \mathcal{A}$.  The fundamental $2$-form $\Omega$
of $g$ is given by
$\Omega(\rho)(\sigma,\tau) = -2\Im(\pair{\sigma}{\tau})$ for all
$\rho,\sigma,\tau\in \mathcal{A}$.  Since $\Omega(\rho)$ is
independent of $\rho$, we have $\diff \Omega = 0$, hence the Hermitian
metric $g$ on $\mathcal{A}$ is K\"ahler.  The action of $K$ on
$\mathcal{A}$ induced by that of $G$ preserves the Hermitian inner
product $\pair{\cdot}{\cdot}$, and hence the K\"ahler metric $g$ on
$\mathcal{A}$.  Similarly, the action of $K$ on $\Lie{G}$ induced by
that of $G$ preserves the Hermitian inner product on $\Lie{G}$.

For each $a\in Q_0$, let $\End{V_a,h_a}$ denote the real Lie
subalgebra of $\End[\C]{V_a}$ consisting of $\C$-endomorphisms $u$ of
$V_a$ that are skew-Hermitian with respect to $h_a$, that is,
\begin{equation*}
  \label{eq:324}
  h_a(u(x),y) + h_a(x,u(y)) = 0
\end{equation*}
for all $x,y\in V_a$.  Then, $\Lie{K}$ equals the real Lie subalgebra
$\bigoplus_{a\in Q_0} \End{V_a,h_a}$ of $\Lie{G}$.  The Hermitian
inner product $\pair{\cdot}{\cdot}$ on $\Lie{G}$ restricts to a real
inner product on $\Lie{K}$, which is given by
$\pair{\xi}{\eta} = -\sum_{a\in Q_0} \tr{\xi_a \circ \eta_a}$ for all
$\xi,\eta \in \Lie{K}$.

For any point $\rho$ in $\mathcal{A}$, let
$\nu_\rho : K \to \mathcal{A}$ be the orbit map of $\rho$, and denote
by $D_\rho$ the $\R$-linear map
$\tang[e]{\nu_\rho} : \Lie{K} \to \mathcal{A}$.  Then, as in
Section~\ref{sec:compl-prem-schur}, we have
\begin{equation*}
  \label{eq:325}
  D_\rho(\xi) =
  (\rho_\alpha \circ \xi_{\sr(\alpha)} -
  \xi_{\tg(\alpha)} \circ \rho_\alpha)_{\alpha\in Q_1}.
\end{equation*}
For every element $\xi$ of $\Lie{K}$, the vector field $\xi^\sharp$ on
$\mathcal{A}$ induced by $\xi$ is the $\C$-endomorphism of
$\mathcal{A}$ given by
$\xi^\sharp(\rho) = \tang[e]{\nu_\rho}(\xi) = D_\rho(\xi)$ for all
$\rho \in \mathcal{A}$.

Recall the notation
\begin{equation*}
  \label{eq:333}
  \deg_\theta(d) = \sum_{a\in Q_0} \theta_a d_a, \quad
  \rk{d} = \sum_{a\in Q_0} d_a, \quad
  \mu_\theta(d) = \frac{\deg_\theta(d)}{\rk{d}},
\end{equation*}
where $\theta \in \R^{Q_0}$ is the rational weight of $Q$ that we have
fixed.  If $a,b\in Q_0$, and $f \in \Hom[\C]{V_a}{V_b}$, let
$f^*\in \Hom[\C]{V_b}{V_a}$ be the adjoint of $f$ with respect to the
Hermitian inner products $h_a$ and $h_b$ on $V_a$ and $V_b$,
respectively.  For every point $\rho$ of $\mathcal{A}$, and
$a\in Q_0$, define an element $L_\theta(\rho)_a$ of $\End{V_a,h_a}$ by
\begin{equation*}
  \label{eq:326}
  L_\theta(\rho)_a =
  \sqrt{-1} \Bigl(
  (\theta_a - \mu_\theta(d)) \id{V_a} +
  \sum_{\alpha\in \tg^{-1}(a)} \rho_\alpha \circ \rho_\alpha^* -
  \sum_{\alpha\in \sr^{-1}(a)}\rho_\alpha^* \circ \rho_\alpha
  \Bigr),
\end{equation*}
and let $L_\theta(\rho)$ be the element
$(L_\theta(\rho)_a)_{a\in Q_0}$ of $\Lie{K}$.  Define a map
$\Phi_\theta : \mathcal{A} \to \Lie{K}^*$ by
\begin{equation*}
  \label{eq:335}
  \Phi_\theta(\rho)(\xi) =
  \pair{\xi}{L_\theta(\rho)}
\end{equation*}
for all $\rho\in \mathcal{A}$ and $\xi\in \Lie{K}$, where
$\pair{\cdot}{\cdot}$ is the real inner product on $\Lie{K}$.

\begin{lemma}
  \label{lem:17}
  Let $\eta$ denote the element
  $\bigl(\sqrt{-1} (\theta_a - \mu_\theta(d)) \id{V_a}\bigr)_{a\in
    Q_0}$ of $\Lie{K}$, and $\alpha$ the element of $\Lie{K}^*$, which
  is defined by $\alpha(\xi) = \pair{\xi}{\eta}$ for all
  $\xi\in \Lie{K}$.  Then,
  \begin{equation*}
    \label{eq:343}
    \Phi_\theta(\rho)(\xi) =
    \frac{1}{2}\Omega(\xi^\sharp(\rho),\rho) +
    \alpha(\xi)
  \end{equation*}
  for all $\rho\in \mathcal{A}$ and $\xi\in \Lie{K}$.  In particular,
  $\Phi_\theta$ is a moment map for the action of $K$ on
  $\mathcal{A}$.
\end{lemma}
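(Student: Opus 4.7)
The plan is to verify the explicit formula by a direct trace computation, and then deduce that $\Phi_\theta$ is a moment map by invoking Lemma~\ref{lem:14}. First I would unwind the definitions: $\xi^\sharp(\rho) = D_\rho(\xi)$ has components $\rho_\alpha\circ\xi_{\sr(\alpha)} - \xi_{\tg(\alpha)}\circ\rho_\alpha$, and the $2$-form is $\Omega(\sigma,\tau) = -2\Im\pair{\sigma}{\tau}$ with $\pair{\sigma}{\tau} = \sum_{\alpha\in Q_1}\tr{\sigma_\alpha\circ\tau_\alpha^*}$. Substituting and applying cyclicity of trace, one rewrites the sum over $Q_1$ as a sum over $Q_0$, using $Q_1 = \bigsqcup_{a\in Q_0}\tg^{-1}(a) = \bigsqcup_{a\in Q_0}\sr^{-1}(a)$, to obtain
\[
\Omega(\xi^\sharp(\rho),\rho) = 2\Im\sum_{a\in Q_0}\tr{\xi_a\circ B_a},
\]
where $B_a = \sum_{\alpha\in\tg^{-1}(a)}\rho_\alpha\circ\rho_\alpha^* - \sum_{\alpha\in\sr^{-1}(a)}\rho_\alpha^*\circ\rho_\alpha$.

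Next I would exploit that $B_a$ is Hermitian and $\xi_a$ is skew-Hermitian, so $(\xi_a\circ B_a)^* = -\xi_a\circ B_a$ and $\tr{\xi_a\circ B_a}$ is purely imaginary; consequently $\Im\tr{\xi_a\circ B_a} = -\sqrt{-1}\tr{\xi_a\circ B_a}$. The same reasoning applies to $\tr{\xi_a\circ\sqrt{-1}(\theta_a - \mu_\theta(d))\id{V_a}}$, whose imaginary part contributes the scalar term $\alpha(\xi)$ when multiplied by the coefficient $\theta_a - \mu_\theta(d)$. Assembling these and using the formula $\pair{\xi}{\zeta} = -\sum_{a\in Q_0}\tr{\xi_a\circ\zeta_a}$ for the real inner product on $\Lie{K}$, one matches
\[
\tfrac{1}{2}\Omega(\xi^\sharp(\rho),\rho) + \alpha(\xi) = \pair{\xi}{L_\theta(\rho)} = \Phi_\theta(\rho)(\xi),
\]
which is the claimed identity.

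For the moment map conclusion, having written $\Phi_\theta$ in the form $\Phi_\alpha$ of Lemma~\ref{lem:14}, it suffices to check that $\alpha\in\Lie{K}^*$ is $K$-invariant. Because the inner product $\pair{\cdot}{\cdot}$ on $\Lie{K}$ is itself $K$-invariant, this amounts to the $\Ad(K)$-invariance of $\eta$. But each component $\eta_a = \sqrt{-1}(\theta_a-\mu_\theta(d))\id{V_a}$ is a scalar on $V_a$, hence central in $\End[\C]{V_a}$, so $\eta$ is fixed by $\Ad(g)$ for every $g\in G$, and in particular for every $g\in K$. Lemma~\ref{lem:14} then delivers the moment map conclusion.

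I do not foresee a conceptual obstacle; the only delicate point is the careful bookkeeping of signs and the reindexing between sums over arrows and sums over vertices in the trace computation. Lemma~\ref{lem:14} cleanly isolates the structural content, so once the identity is established the moment map assertion is immediate from the centrality of $\eta_a$.
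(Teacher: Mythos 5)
Your proposal is correct and follows essentially the same route as the paper's proof: a direct trace computation (reindexing the sum over arrows as a sum over vertices via cyclicity of the trace, and using that $\xi_a$ is skew-Hermitian while the curvature-type term is Hermitian so the relevant traces are purely imaginary) to identify $\tfrac{1}{2}\Omega(\xi^\sharp(\rho),\rho)+\alpha(\xi)$ with $\pair{\xi}{L_\theta(\rho)}$, followed by Lemma~\ref{lem:14} together with the $\Ad(K)$-invariance of the central element $\eta$ to conclude that $\Phi_\theta$ is a moment map. The only cosmetic difference is that the paper applies the skew-Hermitian cancellation globally to $\pair{\xi^\sharp(\rho)}{\rho}$ on $\mathcal{A}$ before reindexing, whereas you apply it componentwise at each vertex afterwards; the content is identical.
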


\proof Let $\rho\in \mathcal{A}$ and $\xi\in \Lie{K}$.  For every
$a\in Q_0$, define an element $A(\rho)_a$ of $\End{V_a,h_a}$, by
\begin{equation*}
  \label{eq:334}
  A(\rho)_a =
  \sqrt{-1} \Bigl(
  \sum_{\alpha\in \tg^{-1}(a)} \rho_\alpha \circ \rho_\alpha^* -
  \sum_{\alpha\in \sr^{-1}(a)}\rho_\alpha^* \circ \rho_\alpha
  \Bigr),
\end{equation*}
and let $A(\rho)$ denote the element $(A(\rho)_a)_{a\in Q_0}$ of
$\Lie{K}$.  Then,
\begin{equation*}
  \label{eq:357}
  L_\theta(\rho) = A(\rho) + \eta, \quad
  \Phi_\theta(\rho)(\xi) =
  \pair{\xi}{A(\rho)} + \alpha(\xi).
\end{equation*}
We claim that
\begin{equation*}
  \label{eq:363}
  \pair{\xi}{A(\rho)} =
  \frac{1}{2} \Omega(\xi^\sharp(\rho),\rho).
\end{equation*}
By the definition of $\Omega$,
\begin{equation*}
  \label{eq:328}
  \frac{1}{2} \Omega(\xi^\sharp(\rho),\rho) =
  -\Im(\pair{\xi^\sharp(\rho)}{\rho}) =
  \frac{\sqrt{-1}}{2} (\pair{\xi^\sharp(\rho)}{\rho} -
  \pair{\rho}{\xi^\sharp(\rho)}).
\end{equation*}
Since $K$ preserves the Hermitian inner product on $\mathcal{A}$, the
$\C$-endomorphism $\xi^\sharp$ of $\mathcal{A}$ is skew-Hermitian,
that is,
\begin{equation*}
  \label{eq:329}
  \pair{\xi^\sharp(\rho)}{\rho} + \pair{\rho}{\xi^\sharp(\rho)} = 0.
\end{equation*}
Therefore,
\begin{equation*}
  \label{eq:330}
  \frac{1}{2} \Omega(\xi^\sharp(\rho),\rho) =
  \sqrt{-1} \pair{\xi^\sharp(\rho)}{\rho} =
  \sqrt{-1} \pair{D_\rho(\xi)}{\rho}.
\end{equation*}
It is easy to see that
\begin{equation*}
  \label{eq:331}
  \pair{D_\rho(\xi)}{\rho} =
  \sqrt{-1} \sum_{a\in Q_0} \tr{\xi_a \circ A(\rho)_a}.
\end{equation*}
Thus,
\begin{equation*}
  \label{eq:332}
  \frac{1}{2} \Omega(\xi^\sharp(\rho),\rho) =
  - \sum_{a\in Q_0} \tr{\xi_a \circ A(\rho)_a} =
  \pair{\xi}{A(\rho)},
\end{equation*}
which proves the above claim, and gives the relation
\begin{equation*}
  \label{eq:364}
  \Phi_\theta(\rho)(\xi) =
  \frac{1}{2} \Omega(\xi^\sharp(\rho),\rho) +
  \alpha(\xi)
\end{equation*}
for all $\rho\in \mathcal{A}$ and $\xi\in \Lie{K}$.  Now, the inner
product on $\Lie{K}$ is $K$-invariant, and for all $g\in K$ and
$\xi\in \Lie{K}$, we have
$\Ad(g)^{-1}\eta = (g_a^{-1}\circ \eta_a \circ g_a)_{a\in Q_0} =
\eta$, hence
\begin{equation*}
  \label{eq:362}
  \alpha(\Ad(g)\xi) = \pair{\Ad(g)\xi}{\eta} =
  \pair{\xi}{\Ad(g)^{-1}\eta} =
  \pair{\xi}{\eta} =
  \alpha(\xi).
\end{equation*}
Therefore, the element $\alpha$ of $\Lie{K}^*$ is $K$-invariant.  It
follows from Lemma~\ref{lem:14} that $\Phi_\theta$ is a moment map for
the action of $K$ on $\mathcal{A}$.  \qed

\begin{lemma}
  \label{lem:18}
  We have $\Phi_\theta(\mathcal{A}) \subset \Ann{\Lie{H\cap K}}$, and
  $\Phi_\theta^{-1}(0) = \eh{\mathcal{A}}$.
\end{lemma}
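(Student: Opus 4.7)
The plan is to reduce both statements to short computations with the explicit formula for $L_\theta(\rho)$ and the non-degeneracy of the trace inner product on $\Lie{K}$.

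First I would identify $\Lie{H\cap K}$ explicitly. Since $H = \units{\C}e$ and $K = \prod_{a\in Q_0}\Aut{V_a,h_a}$, an element $ce$ with $c\in\units{\C}$ lies in $K$ iff $\abs{c}=1$; hence $H\cap K = \UU{1}\,e$ and $\Lie{H\cap K} = \sqrt{-1}\,\R\,e$. So the first assertion reduces to $\Phi_\theta(\rho)(\sqrt{-1}e) = 0$ for every $\rho\in\mathcal{A}$, and by the definitions of $\pair{\cdot}{\cdot}$ and of $\Phi_\theta$ this is equivalent to $\sum_{a\in Q_0}\tr{L_\theta(\rho)_a} = 0$.

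To check this, I would expand $\tr{L_\theta(\rho)_a}$ using the explicit formula for $L_\theta(\rho)_a$ and sum over $a$. The scalar term contributes $\sum_a (\theta_a - \mu_\theta(d))\, d_a = \deg_\theta(d) - \mu_\theta(d)\rk{d}$, which vanishes by the definition of $\mu_\theta(d)$. The quadratic part is handled by swapping the order of summation: each arrow $\alpha\in Q_1$ contributes $\tr{\rho_\alpha \circ \rho_\alpha^*}$ through the vertex $\tg(\alpha)$ and $-\tr{\rho_\alpha^* \circ \rho_\alpha}$ through $\sr(\alpha)$, and these cancel via $\tr{\rho_\alpha\circ \rho_\alpha^*} = \tr{\rho_\alpha^*\circ \rho_\alpha}$.

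For the second equality, the key observation is that the restriction of $\pair{\cdot}{\cdot}$ to $\Lie{K}$ is a positive definite real inner product: for skew-Hermitian $\xi$ we have $\xi_a^* = -\xi_a$, so $\pair{\xi}{\xi} = \sum_a \tr{\xi_a \circ \xi_a^*} \geq 0$, with equality iff $\xi=0$. Hence $\Phi_\theta(\rho) = 0$ iff $L_\theta(\rho) = 0$, which is exactly the system $K_\theta(V,\rho,h)_a = \mu_\theta(d)\id{V_a}$ for all $a\in Q_0$. Since $d\neq 0$ forces $(V,\rho)$ to be non-zero with $\mu_\theta(V,\rho) = \mu_\theta(d)$, the discussion following the definition in Section~\ref{sec:einst-herm-metr} identifies this with the condition that $h$ be Einstein-Hermitian on $(V,\rho)$ with respect to $\theta$. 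Therefore $\Phi_\theta^{-1}(0) = \eh{\mathcal{A}}$.

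No step is genuinely hard; the only point demanding care is the telescoping of the quadratic terms, which amounts to recognising the double sum over $a$ and over $\tg^{-1}(a)$ or $\sr^{-1}(a)$ as a single sum over $Q_1$.
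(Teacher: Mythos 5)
Your proposal is correct and follows essentially the same route as the paper: both reduce the first claim to $\sum_{a\in Q_0}\tr{L_\theta(\rho)_a}=0$ (splitting off the scalar part, which vanishes by the definition of $\mu_\theta(d)$, and telescoping the quadratic part over $Q_1$), and both deduce the second claim from the non-degeneracy of the inner product on $\Lie{K}$ together with the identity $L_\theta(\rho)=\sqrt{-1}(K_\theta(V,\rho)-\mu_\theta(d)e)$. The only difference is cosmetic: you spell out the trace-cancellation over arrows, which the paper leaves implicit.
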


\proof Let $\rho\in \mathcal{A}$, and
$\xi \in \Lie{H\cap K} = \Lie{H}\cap \Lie{K}$.  Then, there exists a
real number $c$, such that $\xi = \sqrt{-1} c e$, where
$e=(\id{V_a})_{a\in Q_0}$ is the identity element of
$G\subset \Lie{G}$.  Therefore,
\begin{equation*}
  \label{eq:338}
  \Phi_\theta(\rho)(\xi) =
  \pair{\xi}{L_\theta(\rho)} =
  - \sum_{a\in Q_0} \tr{\xi_a \circ L_\theta(\rho)_a} =
  - \sqrt{-1} c \sum_{a\in Q_0} \tr{L_\theta(\rho)_a}.
\end{equation*}
But, with $A(\rho)$ as in the proof of Lemma~\ref{lem:17}, we have
\begin{equation*}
  \label{eq:339}
  \sum_{a\in Q_0} \tr{L_\theta(\rho)_a} =
  \sum_{a\in Q_0} \bigl(
  \tr{A(\rho)_a} +
  \sqrt{-1}(\theta_a-\mu_\theta(d))d_a \bigr) =
  \sum_{a\in Q_0} \tr{A(\rho)_a} = 0.
\end{equation*}
Therefore, $\Phi_\theta(\rho)(\xi) = 0$, hence
$\Phi_\theta(\mathcal{A}) \subset \Ann{\Lie{H\cap K}}$.

Lastly, in the notation of Section~\ref{sec:einst-herm-metr}, we have
$L_\theta(\rho) = \sqrt{-1}(K_\theta(V,\rho) - \mu_\theta(d)e)$ for
all $\rho\in \mathcal{A}$.  As $L_\theta(\rho) \in \Lie{K}$, and
$\pair{\cdot}{\cdot}$ is an inner product on $\Lie{K}$, we have
\begin{equation*}
  \label{eq:340}
  \Phi_\theta(\rho) = 0 \Liff L_\theta(\rho) = 0 \Liff
  K_\theta(V,\rho) = \mu_\theta(d) e.
\end{equation*}
Therefore, $\Phi_\theta^{-1}(0) = \eh{\mathcal{A}}$.  \qed

\begin{theorem}
  \label{thm:2}
  With notation as above, the following statements are true:
  \begin{enumerate}
  \item \label{item:87} The set $\eh{\mathcal{B}}$ is a closed smooth
    submanifold of $\mathcal{B}$, $\stab{\mathcal{B}}$ is open in
    $\mathcal{B}$, $\stab{M}$ is open in $M$, the action of $\bar{K}$
    on $\eh{\mathcal{B}}$ is principal, and
    $\eh{p} : \eh{\mathcal{B}} \to \stab{M}$ is a smooth principal
    $\bar{K}$-bundle.
  \item \label{item:88} The action of $\bar{G}$ on
    $\stab{\mathcal{B}}$ is proper, $\stab{M}$ is a Hausdorff open
    subspace of $M$, the action of $\bar{G}$ on $\stab{\mathcal{B}}$
    is principal, and $\stab{p} : \stab{\mathcal{B}} \to \stab{M}$ is
    a holomorphic principal $\bar{G}$-bundle.
  \item \label{item:89} There exists a unique K\"ahler metric
    $\stab{h}$ on $\stab{M}$, such that
    $\eh{p}^*(\stab{\Theta}) = \eh{\Omega}$, where $\stab{\Theta}$ is
    the K\"ahler form of $\stab{h}$, $\eh{\Omega} = \eh{i}^*(\Omega)$,
    $\eh{i} : \eh{\mathcal{B}} \to \mathcal{B}$ is the inclusion map,
    and $\Omega$ is the K\"ahler form on $\mathcal{B}$.
  \end{enumerate}
\end{theorem}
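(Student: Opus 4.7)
The plan is to derive all three parts of the theorem at one stroke as a direct application of Corollary~\ref{cor:4}, taking $X = \mathcal{B}$ with the K\"ahler metric inherited from the flat metric on $\mathcal{A}$, the holomorphic right action of $G$, the normal central subgroup $H$ with quotient $\bar{G}$, the compact subgroup $K$ with image $\bar{K}$, and $\restrict{\Phi_\theta}{\mathcal{B}}$ in the role of the moment map. Under this identification, the zero level set of the moment map on $\mathcal{B}$ corresponds to $\eh{\mathcal{B}}$, its $G$-saturation corresponds to $\stab{\mathcal{B}}$, and its image in $M$ corresponds to $\stab{M}$, so parts (\ref{item:80})--(\ref{item:82}) of Corollary~\ref{cor:4} translate directly into parts (\ref{item:87})--(\ref{item:89}) of the present theorem.

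The K\"ahler and moment-map hypotheses are already in place from the preceding lemmas. The restricted K\"ahler form on the $G$-invariant open complex submanifold $\mathcal{B}$ of $\mathcal{A}$ is $K$-invariant. By Lemma~\ref{lem:17}, $\Phi_\theta$ is a moment map for the $K$-action on $\mathcal{A}$, so its restriction is a moment map for the $K$-action on $\mathcal{B}$. By Lemma~\ref{lem:18}, $\Phi_\theta(\mathcal{B}) \subset \Ann{\Lie{H\cap K}}$, and $\Phi_\theta^{-1}(0) \cap \mathcal{B} = \eh{\mathcal{A}} \cap \mathcal{B} = \eh{\mathcal{B}}$ by the very definition of the latter. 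Lemma~\ref{lem:16} then supplies both the equality $\eh{\mathcal{B}}\,G = \stab{\mathcal{B}}$ that identifies the $G$-saturation of the zero set, and the orbit inclusion $P_G(\eh{\mathcal{B}},\eh{\mathcal{B}}) \subset HK$ required by the corollary.

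The two remaining hypotheses of Corollary~\ref{cor:4} are likewise already available. The stabiliser equality $G_\rho = H$ for every $\rho \in \mathcal{B}$ was recorded in Section~\ref{sec:compl-prem-schur} as a consequence of Proposition~\ref{pro:2}(\ref{item:25}), and the principality of the induced holomorphic $\bar{G}$-action on $\mathcal{B}$ is exactly Theorem~\ref{thm:1}. Applying Corollary~\ref{cor:4} then produces at a single stroke the closed submanifold structure on $\eh{\mathcal{B}}$, the principal $\bar{K}$-bundle structure of $\eh{p}$, the principal $\bar{G}$-bundle structure of $\stab{p}$, the properness of the $\bar{G}$-action on $\stab{\mathcal{B}}$, the Hausdorff property of $\stab{M}$, and the existence and uniqueness of the K\"ahler metric $\stab{h}$ on $\stab{M}$ with $\eh{p}^*(\stab{\Theta}) = \eh{\Omega}$.

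There is no genuine obstacle left at this final stage: all the technical substance has been moved upstream into Theorem~\ref{thm:1} and Lemmas~\ref{lem:16}--\ref{lem:18}. The real difficulty lives in Lemma~\ref{lem:16}, whose orbit inclusion $P_G(\eh{\mathcal{B}},\eh{\mathcal{B}}) \subset HK$ hinges on the uniqueness-up-to-automorphism clause of Proposition~\ref{pro:14} for Einstein-Hermitian metrics; without it neither the principal $\bar{K}$-bundle structure nor the properness of the $\bar{G}$-action on $\stab{\mathcal{B}}$ would be within reach. With that input on hand, the present theorem is a clean recognition problem: match the data to the hypotheses of the general K\"ahler-quotient corollary, and read off the conclusions.
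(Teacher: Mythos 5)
Your proposal is correct and follows essentially the same route as the paper: the paper's own proof likewise restricts $\Phi_\theta$ to $\mathcal{B}$, invokes Lemmas~\ref{lem:16}--\ref{lem:18} and Theorem~\ref{thm:1} to verify the hypotheses of Corollary~\ref{cor:4}, and reads off the three conclusions. (You even correct a small slip in the paper by attributing the moment-map property to Lemma~\ref{lem:17} rather than Lemma~\ref{lem:18}.)
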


\proof The stabiliser $G_\rho$ of any point $\rho$ of $\mathcal{B}$
equals $H$, and, by Theorem~\ref{thm:1}, the induced action of
$\bar{G}$ on $\mathcal{B}$ is principal.  Let
$\Psi_\theta : \mathcal{B} \to \Lie{K}^*$ be the restriction of
$\Phi_\theta$.  By Lemma~\ref{lem:18}, $\Phi_\theta$ is a moment map
for the action of $K$ on $\mathcal{A}$, hence $\Psi_\theta$ is a
moment map for the action of $K$ on $\mathcal{B}$.  Moreover,
$\Psi_\theta(\mathcal{B}) \subset \Phi_\theta(\mathcal{A}) \subset
\Ann{\Lie{H\cap K}}$, and
$\zm{\mathcal{B}} := \Psi_\theta^{-1}(0) = \Phi_\theta^{-1}(0) \cap
\mathcal{B} = \eh{\mathcal{A}} \cap \mathcal{B} = \eh{\mathcal{B}}$.
Finally, by Lemma~\ref{lem:16}, we have
$\zms{\mathcal{B}} := \zm{\mathcal{B}} G = \eh{\mathcal{B}}G =
\stab{\mathcal{B}}$, and
$P_G(\zm{\mathcal{B}},\zm{\mathcal{B}}) =
P_G(\eh{\mathcal{B}},\eh{\mathcal{B}}) \subset HK$.  Therefore, the
Theorem follows from Corollary~\ref{cor:4}.  \qed

\section{The line bundle on the moduli of stable representations}
\label{sec:line-bundle-moduli}

\subsection{Line bundles on quotients of vector spaces}
\label{sec:line-bundl-quot}

Let $V$ be a finite-dimensional $\C$-vector space,
$\pair{\cdot}{\cdot}$ a Hermitian inner product on $V$, and
$\Omega = -2 \Im(\pair{\cdot}{\cdot})$ its fundamental $2$-form.  We
will consider $V$ to be a K\"ahler manifold in the usual way.  Let $G$
be a complex Lie group, and $K$ a real Lie subgroup of $G$.  Suppose
that we are given a holomorphic linear right action of $G$, and that
the induced action of $K$ on $V$ preserves the Hermitian inner product
$\pair{\cdot}{\cdot}$ on $V$.

Let $\chi : G \to \units{\C}$ be a character of $G$, and suppose that
$\chi(K) \subset \UU{1}$.  Then, $\tang[e]{\chi}(\Lie{K})$ is
contained in the $\R$-subspace $\Lie{\UU{1}} = \sqrt{-1}\R$ of
$\Lie{\units{\C}} = \C$.  Fix a non-zero real number $\lambda$.  Let
$\alpha$ be the element of $\Lie{K}^*$ defined by
$\alpha(\xi) = -\frac{\sqrt{-1}}{\lambda} \tang[e]{\chi}(\xi)$ for all
$\xi \in \Lie{K}$.  Since $\chi(gag^{-1}) = \chi(a)$ for all
$a,g\in G$, $\alpha$ is $K$-invariant.  Therefore, by
Lemma~\ref{lem:14}, the map $\Phi_\alpha : V \to \Lie{K}^*$, which is
defined by
\begin{equation*}
  \label{eq:337}
  \Phi_\alpha(x)(\xi) =
  \frac{1}{2} \Omega(\xi^\sharp(x),x) + \alpha(\xi)
\end{equation*}
for all $x\in V$ and $\xi\in \Lie{K}$, is a moment map for the action
of $K$ on $V$.

Let $E$ denote the trivial holomorphic line bundle $V\times \C$ on
$V$.  Define a right action of $G$ on $E$ by setting
$(x,a)g = (xg,\chi(g)^{-1}a)$ for all $(x,a)\in E$ and $g\in G$.  Let
$\Gamma(E)$ denote the $\C$-vector space of smooth sections of $E$ on
$V$.  For each $\xi\in \Lie{G}$ and $s\in \Gamma(E)$, define another
section $\xi s\in \Gamma(E)$ by
\begin{equation*}
  \label{eq:366}
  (\xi s)(x) =
  \frac{\diff}{\diff t} \Bigr\vert_{t=0}
  \bigl(s(x\exp(t\xi))\exp(-t\xi)\bigr)
\end{equation*}
for all $x\in V$.  For every $x\in V$, define a Hermitian inner
product $h(x) : E(x) \times E(x) \to \C$ by
\begin{equation*}
  \label{eq:345}
  h(x)((x,a),(x,b)) = \exp(\lambda \norm{x}^2)a\bar{b}
\end{equation*}
for all $a,b\in \C$.  This gives a smooth Hermitian metric $h$ on $E$.

\begin{lemma}
  \label{lem:20}
  Let $\nabla$ be the canonical connection of the Hermitian
  holomorphic line bundle $(E,h)$ on $V$.  Then:
  \begin{enumerate}
  \item \label{item:95} For all $\xi\in \Lie{K}$ and $s\in \Gamma(E)$,
    we have
    $\nabla_{\xi^\sharp}(s) = \xi s - \lambda \sqrt{-1}
    \Phi_\alpha^\xi s$.
  \item \label{item:96} The first Chern form $c_1(E,h)$ of $\nabla$
    equals $- \frac{\lambda}{2\pi} \Omega$.
  \end{enumerate}
\end{lemma}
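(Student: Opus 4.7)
The plan is to work in the constant holomorphic frame $e$ of the trivial bundle $E = V\times\C$, so that any smooth section $s\in\Gamma(E)$ corresponds to a smooth function $f:V\to\C$ with $s=fe$, and $h(e,e)=\exp(\lambda\norm{x}^2)$. The Chern connection in this frame is $\nabla s = (df + f\,\theta)\otimes e$, where $\theta = \partial\log h(e,e) = \lambda\,\partial\norm{x}^2$. Differentiating $s(x\exp(t\xi))\exp(-t\xi)$ at $t=0$ via the $G$-action $(x,a)g=(xg,\chi(g)^{-1}a)$, I would first check that $\xi s$ corresponds in this frame to $\xi^\sharp(f) + \tang[e]{\chi}(\xi)\,f$.

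For (\ref{item:95}), substituting these expressions and cancelling the common $\xi^\sharp(f)$ reduces the claim to the pointwise scalar equation
\[
\Phi_\alpha^\xi(x) - \alpha(\xi) \;=\; \sqrt{-1}\,(\partial\norm{x}^2)(\xi^\sharp)(x),
\]
where $\alpha(\xi)=-\frac{\sqrt{-1}}{\lambda}\tang[e]{\chi}(\xi)$. Since $K$ acts $\C$-linearly by unitary transformations, $\xi^\sharp$ is a holomorphic vector field with value $\xi^\sharp(x)\in V$ at $x$; choosing orthonormal complex coordinates $z_1,\dots,z_n$ on $V$, one has $\partial\norm{x}^2=\sum_j\bar z_j\,dz_j$, and hence $(\partial\norm{x}^2)(\xi^\sharp)(x) = \pair{\xi^\sharp(x)}{x}$. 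The identity then collapses to $\frac{1}{2}\Omega(\xi^\sharp(x),x) = \sqrt{-1}\,\pair{\xi^\sharp(x)}{x}$, which follows at once from $\Omega = -2\Im\pair{\cdot}{\cdot}$ once one observes that $\pair{\xi^\sharp(x)}{x}$ is purely imaginary, because $\xi^\sharp$ is skew-Hermitian.

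For (\ref{item:96}), the curvature of the Chern connection on a line bundle is $F = d\theta = \bar\partial\theta = \lambda\,\bar\partial\partial\norm{x}^2$ (the $\partial\theta$ term vanishes because $\partial^2 = 0$). In the same orthonormal coordinates $\bar\partial\partial\norm{x}^2 = -\sum_j dz_j\wedge d\bar z_j$, whereas $\Omega = \sqrt{-1}\sum_j dz_j\wedge d\bar z_j$, so $F = \sqrt{-1}\,\lambda\,\Omega$ and $c_1(E,h) = \frac{\sqrt{-1}}{2\pi}F = -\frac{\lambda}{2\pi}\Omega$.

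The main conceptual step is the identification in (\ref{item:95}) of the real symplectic expression $\frac{1}{2}\Omega(\xi^\sharp(x),x)$ with the complex linear expression $\sqrt{-1}\pair{\xi^\sharp(x)}{x}$; this rests entirely on the skew-Hermiticity of $\xi^\sharp$, and once it is established, both statements reduce to formal calculations with the K\"ahler potential $\lambda\norm{x}^2$.
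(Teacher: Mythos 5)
Your proof is correct and follows essentially the same route as the paper: both work in the constant holomorphic frame $s_0(x)=(x,1)$, where the connection form is $\lambda\,\partial\norm{x}^2$, both reduce part (\ref{item:95}) to the pointwise identity $\tfrac{1}{2}\Omega(\xi^\sharp(x),x)=\sqrt{-1}\,\pair{\xi^\sharp(x)}{x}$ coming from the skew-Hermiticity of $\xi^\sharp$, and both obtain part (\ref{item:96}) from $R=\lambda\,\bar{\partial}\partial\norm{x}^2$ and $\sqrt{-1}\,\partial\bar{\partial}\norm{x}^2=\Omega$. The only cosmetic difference is that the paper verifies the formula on the frame section and extends to $s=fs_0$ by the Leibniz rule, whereas you write the general section in the frame from the outset and cancel the $\xi^\sharp(f)$ terms.
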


\proof Let $\xi\in \Lie{K}$.  Define a map $s_0 : V \to E$ by
$s_0(x) = (x,1)$ for all $x\in V$.  It is a holomorphic frame of $E$
on $V$.  We have
\begin{equation*}
  \label{eq:368}
  \nabla_v(s_0) = \lambda \pair{v}{x} s_0(x)
\end{equation*}
for all $x\in V$ and $v\in \tang[x]{V} = V$.  Therefore,
\begin{equation*}
  \label{eq:369}
  \nabla_{\xi^\sharp}(s_0)(x) =
  \lambda \pair{\xi^\sharp(x)}{x} s_0(x) =
  - \frac{\lambda\sqrt{-1}}{2} \Omega(\xi^\sharp(x),x) s_0(x).
\end{equation*}
On the other hand,
\begin{equation*}
  \label{eq:370}
  (\xi s_0)(x) = \tang[e]{\chi}(\xi) s_0(x) =
  \lambda \sqrt{-1} \alpha(\xi) s_0(x).
\end{equation*}
Thus,
\begin{equation*}
  \label{eq:371}
  (\xi s_0)(x) - \nabla_{\xi^\sharp}(s_0)(x) =
  \lambda \sqrt{-1} \bigl(
  \alpha(\xi) + \frac{1}{2} \Omega(\xi^\sharp(x),x)
  \bigr) s_0(x) =
  \lambda \sqrt{-1} \Phi_\alpha(x)(\xi) s_0(x).
\end{equation*}
It follows that
\begin{equation*}
  \label{eq:374}
  \xi s_0 - \nabla_{\xi^\sharp}(s_0) =
  \lambda \sqrt{-1} \Phi_\alpha^\xi s_0.
\end{equation*}
Now, let $s$ be an arbitrary element of $\Gamma(E)$.  Then, there
exists a smooth complex function $f$ on $V$, such that $s=fs_0$.  It
is easy to see that
\begin{equation*}
  \label{eq:372}
  \xi (fs_0) = \xi^\sharp(f)s_0 + f (\xi s_0).
\end{equation*}
Therefore,
\begin{align*}
  \xi s - \nabla_{\xi^\sharp}(s)
  &=
    \bigl( \xi^\sharp(f)s_0 + f (\xi s_0) \bigr) -
    \bigl( \xi^\sharp(f)s_0 + f \nabla_{\xi^\sharp}(s_0) \bigr) \\
  &=
    f (\xi s_0 - \nabla_{\xi^\sharp}(s_0)) =
    f \lambda \sqrt{-1} \Phi_\alpha^\xi s_0 =
    \lambda \sqrt{-1} \Phi_\alpha^\xi s.
\end{align*}
This proves (\ref{item:95}).

Let $\omega$ be the connection form of $\nabla$ with respect to the
holomorphic frame $s_0$ of $E$ on $V$, and $R$ the curvature form of
$\nabla$.  Then, $\omega = \lambda \partial N$, and
$R = \bar{\partial} \omega$, where $N : V \to \R$ is the smooth
function $x\mapsto \norm{x}^2$.  Therefore,
\begin{equation*}
  \label{eq:373}
  c_1(E,h) =
  \frac{\sqrt{-1}}{2\pi} R =
  - \frac{\lambda\sqrt{-1}}{2\pi} \partial \bar{\partial} N.
\end{equation*}
It is easy to see that $\sqrt{-1} \partial \bar{\partial} N = \Omega$.
Thus, $c_1(E,h) = - \frac{\lambda}{2\pi} \Omega$, as stated in
(\ref{item:96}). \qed

Let $H$ be a normal complex Lie subgroup of $G$, $\bar{G}$ the complex
Lie group $H\backslash G$, and $\pi: G \to \bar{G}$ the canonical
projection.  Let $\bar{K}$ be the compact subgroup $\pi(K)$ of
$\bar{G}$, and $\pi_K : K \to \bar{K}$ the homomorphism of real Lie
groups induced by $\pi$.

Let $X$ be a $G$-invariant open subset of $V$, $\zm{X}$ the closed
subset $\Phi_\alpha^{-1}(0)\cap X$ of $X$, and $\zms{X} = \zm{X}G$.
Denote by $Y$ the quotient topological space $X/G$, and let
$p : X \to Y$ be the canonical projection.  Let
$\zms{Y} = p(\zms{X})$, $\zms{p} : \zms{X} \to \zms{Y}$ the map
induced by $p$, and
$\zm{p} = \restrict{\zms{p}}{\zm{X}} : \zm{X} \to \zms{Y}$.

The subset $E_X = X\times \C$ is a $G$-invariant open subset of $E$.
Let $F$ denote the quotient topological space $E_X/G$, and
$q : E_X \to F$ the canonical projection.  There is a canonical
continuous surjection from $F$ to $Y$, and every fibre of this map has
a canonical structure of a $1$-dimensional $\C$-vector space.  Thus,
$F$ is a family of $1$-dimensional $\C$-vector spaces on $Y$.  Let
$\zm{F}$ (respectively, $\zms{F}$) denote the restriction of this
family to the subspace $\zm{Y}$ (respectively, $\zms{Y}$) of $Y$.  For
every $x\in X$, the map $q : E_X \to F$ restricts to a $\C$-isomorphism
$q(x) : E(x) \to F(p(x))$.

Note that if $H$ is contained in the kernel of the character
$\chi : G \to \units{\C}$, then we have an induced action of $\bar{G}$
on $E$, and hence on $E_X$.  If, moreover, the action of $\bar{G}$ on
$X$ is principal, then so is its action on $E_X$.  Thus, in that case,
there is a unique structure of a complex premanifold on $F$, such that
$q$ is a holomorphic submersion.  With this structure, the family $F$
of $1$-dimensional $\C$-vector spaces is a holomorphic line bundle on
$Y$(It is the holomorphic line bundle associated with the
holomorphic principal $\bar{G}$-bundle $p : X \to Y$, and the
character of $\bar{G}$ induced by $\chi : G \to \units{\C}$.).  For
every holomorphic (respectively, smooth) section $t$ of $F$ on any
open subset $V$ of $Y$, there exists a unique holomorphic
(respectively, smooth) section $s$ of $E_X$ on $p^{-1}(V)$, which is
$\bar{G}$-invariant (that is, $s(xa) = s(x)a$ for all $x\in p^{-1}(V)$
and $a\in \bar{G}$), such that $q(s(x)) = t(p(x))$ for all
$x\in p^{-1}(V)$.

\begin{proposition}
  \label{pro:30}
  Consider the context of Corollary~\ref{cor:4}.  Suppose that
  $G_x = H$ for all $x\in X$, the induced action of $\bar{G}$ on $X$
  is principal, $H\subset \Ker{\chi}$, and
  \begin{equation*}
    \label{eq:382}
    \Phi_\alpha(X) \subset \Ann{\Lie{H\cap K}}, \quad
    P_G(\zm{X},\zm{X}) \subset HK.
  \end{equation*}
  Then, there exists a unique smooth Hermitian metric $\zms{k}$ on the
  holomorphic line bundle $\zms{F}$ on $\zms{Y}$, such that
  $c_1(\zms{F},\zms{k}) = -\frac{\lambda}{2\pi} \zms{\Theta}$, where
  $\zms{\Theta}$ is the K\"ahler form on the open complex submanifold
  $\zms{Y}$ of $Y$.
\end{proposition}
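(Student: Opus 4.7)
The plan is to construct $\zms{k}$ by descending the Hermitian metric $h$ on $E$ along the principal $\bar{K}$-bundle $\zm{p}\colon\zm{X}\to\zms{Y}$ produced in Corollary~\ref{cor:4}(\ref{item:80}).  Since $H\subset\Ker{\chi}$, the $G$-action on $E$ factors through $\bar{G}$, and $\zms{F}$ is, by construction, the holomorphic quotient of $\restrict{E}{\zms{X}}$ by $\bar{G}$.  Using $\zms{X}=\zm{X}\bar{G}$ together with $P_G(\zm{X},\zm{X})\subset HK$, one checks that the smooth line bundle underlying $\zms{F}$ is canonically isomorphic to the $\bar{K}$-quotient of $\restrict{E}{\zm{X}}$, via the map induced by $\restrict{E}{\zm{X}}\hookrightarrow\restrict{E}{\zms{X}}\xrightarrow{q}\zms{F}$.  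Consequently, any $\bar{K}$-invariant smooth Hermitian metric on $\restrict{E}{\zm{X}}$ descends to a unique smooth Hermitian metric on the smooth bundle underlying $\zms{F}$.

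The key verification is the $\bar{K}$-invariance of $\restrict{h}{\zm{X}}$.  Since $K$ preserves $\pair{\cdot}{\cdot}$, we have $\norm{xg}=\norm{x}$ for all $g\in K$ and $x\in V$; and since $\chi(K)\subset\UU{1}$, we have $\abs{\chi(g)}=1$.  Substituting these facts into the formula $h(x)((x,a),(x,b))=\exp(\lambda\norm{x}^2)a\bar{b}$ and the action rule $(x,a)g=(xg,\chi(g)^{-1}a)$ shows that $h$ is $K$-invariant on all of $E$; because $H\subset\Ker{\chi}$, the induced $\bar{K}$-action on $E$ is well defined and still preserves $h$.  Hence $\restrict{h}{\zm{X}}$ descends to a smooth Hermitian metric $\zms{k}$ on $\zms{F}$, uniquely characterised by the compatibility $\zm{p}^*(\zms{k})=\restrict{h}{\zm{X}}$.

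For the Chern form identity I would argue by pulling back to $\zm{X}$.  By Lemma~\ref{lem:20}(\ref{item:96}), $c_1(E,h)=-\frac{\lambda}{2\pi}\Omega$, hence $\zm{i}^*(c_1(E,h))=-\frac{\lambda}{2\pi}\zm{\Omega}$, and by Corollary~\ref{cor:4}(\ref{item:82}) this equals $-\frac{\lambda}{2\pi}\zm{p}^*(\zms{\Theta})$.  On the other hand, combining the identity $\zm{p}^*(\zms{k})=\restrict{h}{\zm{X}}$ with naturality of Chern forms under the holomorphic submersion $\zms{p}$, and using the horizontality of $\zm{\Omega}$ on $\zm{X}$ (which was the essential point in Corollary~\ref{cor:4}(\ref{item:82})), one obtains $\zm{p}^*(c_1(\zms{F},\zms{k}))=\zm{i}^*(c_1(E,h))$.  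Since $\zm{p}$ is a surjective smooth submersion its pullback on differential forms is injective, so $c_1(\zms{F},\zms{k})=-\frac{\lambda}{2\pi}\zms{\Theta}$.

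The expected obstacle is the uniqueness assertion.  Two Hermitian metrics on a holomorphic line bundle with the same first Chern form differ by multiplication by $e^{f}$ for some pluriharmonic real function $f$, so the Chern form alone does not determine the metric on a general complex premanifold.  I would therefore interpret the uniqueness in the proposition as uniqueness of the descended metric from $\restrict{h}{\zm{X}}$, that is, uniqueness among Hermitian metrics $\zms{k}$ satisfying $\zm{p}^*(\zms{k})=\restrict{h}{\zm{X}}$; this is immediate from the injectivity of $\zm{p}^*$ on sections.  Any stronger uniqueness statement would require additional hypotheses, such as vanishing of pluriharmonic real functions on $\zms{Y}$.
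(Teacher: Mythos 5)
Your construction of $\zms{k}$ by descending $\restrict{h}{\zm{X}}$ along the principal $\bar{K}$-bundle $\zm{p}$ is exactly the paper's first step, and your verification of $K$-invariance of $h$ is fine. The gap is in the curvature computation. You assert $\zm{p}^*(c_1(\zms{F},\zms{k}))=\zm{i}^*(c_1(E,h))$ by ``naturality of Chern forms under the holomorphic submersion $\zms{p}$,'' but this does not apply: the metric that descends to $\zms{k}$ is $\restrict{h}{\zm{X}}$ along the merely smooth map $\zm{p}$, not $h$ along the holomorphic map $\zms{p}$ ($h$ is only $K$-invariant, not $G$-invariant, so it does not descend through $\zms{p}$ at all). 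Naturality along $\zms{p}$ would only tell you that $c_1(\restrict{E}{\zms{X}},\zms{p}^*\zms{k})=\zms{p}^*c_1(\zms{F},\zms{k})$, and $\zms{p}^*\zms{k}$ agrees with $h$ only on $\zm{X}$; since $c_1$ is $\frac{\sqrt{-1}}{2\pi}\bar{\partial}\partial\log$ of the metric, its restriction to $\zm{X}$ still involves derivatives of the metric in the directions $\sqrt{-1}\,\tang[x]{\zm{X}}$, which are transverse to $\zm{X}$, so agreement of the two metrics on $\zm{X}$ is not enough. The paper closes this gap by descending the Chern connection $\nabla$ of $(E,h)$ to a connection $\nabla'$ on $\zms{F}$ and proving that $\nabla'$ \emph{is} the Chern connection of $(\zms{F},\zms{k})$: well-definedness and metric compatibility use Lemma~\ref{lem:20}(\ref{item:95}) together with the vanishing of $\Phi_\alpha$ on $\zm{X}$ (so that $\nabla_{\xi^\sharp}(s)=0$ for $K$-invariant $s$), and compatibility with the holomorphic structure uses the decomposition $\sqrt{-1}v=v'+\sqrt{-1}\xi^\sharp(x)$ from Lemma~\ref{lem:15}. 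Your proposal never invokes Lemma~\ref{lem:20}(\ref{item:95}) or the moment-map condition at this stage, and without that input the identity you need is unproved.

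On uniqueness: your observation that two metrics with the same Chern form differ by $e^f$ with $f$ pluriharmonic is correct, and the paper's own proof in fact establishes only existence; so your reinterpretation of the uniqueness clause as uniqueness of the descended metric is a reasonable reading, though it is not what the statement literally asserts. This does not, however, repair the main gap above.
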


\proof For every point $y\in \zms{Y}$, define
$\zms{k}(y) : F(y) \times F(y) \to \C$ by
$\zms{k}(y)(a,b) = h(x)(a',b')$, where $x$ is any point of
$\zm{p}^{-1}(y)$ and $a',b'\in E(x)$ are such that $q(a')=a$ and
$q(b')=b$.  Then, since $\zm{p} : \zm{X} \to \zms{Y}$ is a smooth
principal $\bar{K}$-bundle, and the metric $h$ is $K$-invariant, the
above rule gives a well-defined smooth Hermitian metric $\zms{k}$ on
$\zms{F}$.

Suppose $t$ is a smooth section of $\zms{F}$ on an open subset $V$ of
$\zms{Y}$, $y\in \zms{Y}$, and $w\in \tang[y]{Y}$.  We will define an
element $\nabla'_w(t)$ of $F(y)$ as follows.  Let
$x\in \zm{p}^{-1}(y)$, and choose $v\in \tang[x]{\zm{X}}$, such that
$\tang[x]{\zm{p}}(v) = w$.  Let $s$ be the unique $K$-invariant
section of $E$ on $\zm{p}^{-1}(V)$ which projects to $t$.  Define
$\nabla'_w(t) = q(\nabla_v(s))$.  If $x'\in \zm{p}^{-1}(y)$ and
$v'\in \tang[x']{\zm{X}}$ are two other choices, such that
$\tang[x']{\zm{p}}(v') = w$, then there exists a unique $g\in K$, such
that $x'=xg$.  Now, $v' - \tang[x]{\rho_g}(v)$ belongs to
$\Ker{\tang[x']{\zm{p}}}$, and is hence of the form $\xi^\sharp(x')$
for some $\xi\in \Lie{K}$.  Thus, by Lemma~\ref{lem:20},
\begin{equation*}
  \label{eq:383}
  \nabla_{v'}(s) =
  \nabla_{\xi^\sharp(x')}(s) +
  \nabla_{\tang[x]{\rho_g}(v)}(s) =
  \bigl(
  (\xi s)(x') - \lambda \sqrt{-1} \Phi_\alpha^\xi(x')s(x')
  \bigr) +
  \bigl(\nabla_v(s)\bigr)g,
\end{equation*}
since the action of $K$ preserves the metric $h$ on $E$, and hence its
canonical connection $\nabla$ also.  Now, since $s$ is $K$-invariant,
we have $\xi s = 0$, and since $x'\in \zm{X}$, we have
$\Phi_\alpha^\xi(x')=0$.  Therefore,
$\nabla_{v'}(s) = \bigl(\nabla_v(s)\bigr)g$, hence
$q(\nabla_{v'}(s)) = q(\bigl(\nabla_v(s)\bigr))$.  It follows that
$\nabla'_w(t)$ is well-defined.  Since $\zm{q}$ is a smooth principal
$\bar{K}$-bundle, this rule defines a smooth connection $\nabla'$ on
$\zms{F}$.

We claim that $\nabla'$ is the canonical connection of the Hermitian
holomorphic line bundle $(\zms{F},\zms{k})$ on $\zms{Y}$.  As $\nabla$
is compatible with the metric $h$ on $E$, and $K$ preserves $h$,
$\nabla'$ is compatible with the metric $\zms{k}$ on $\zms{F}$.
Therefore, we only need to check that $\nabla'$ is compatible with the
holomorphic structure on $\zms{F}$.  Let $t$ be a holomorphic section
of $\zms{F}$ on an open subset $V$ of $\zms{Y}$, $y\in V$, and
$w\in \tang[y]{Y}$.  We have to check that
$\nabla'_{\sqrt{-1}w}(t) = \sqrt{-1} \nabla'_{w}(t)$.  Let $s$ be the
$G$-invariant holomorphic section of $E$ on $\zms{p}^{-1}(V)$
corresponding to $t$.  Let $x\in \zm{p}^{-1}(y)$, and choose
$v\in \tang[x]{\zm{X}}$, such that $\tang[x]{\zm{p}}(v) = w$.  Then,
by Lemma~\ref{lem:15}, $\sqrt{-1}v = v' + \sqrt{-1}\xi^\sharp(x)$,
where $v' \in \tang[x]{\zm{X}}$ and $\xi\in \Lie{K}$.  By definition,
$\nabla'_w(t) = \nabla_v(s)$.  Similarly, since
$\tang[x]{\zm{p}}(v') = \tang[x]{p}(\sqrt{-1}(v-\xi^\sharp(x))) =
\sqrt{-1}w$, we have $\nabla'_{\sqrt{-1}w}(t) = \nabla_{v'}(s)$.  Now,
since $\nabla$ is compatible with the holomorphic structure on $E$, we
get
\begin{equation*}
  \label{eq:384}
  \nabla_{v'}(s) = \nabla_{\sqrt{-1}(v-\xi^\sharp(x))}(s) =
  \sqrt{-1} (\nabla_v(s)-\nabla_{\xi^\sharp(x)}(s))
\end{equation*}
But, as we saw above, $\nabla_{\xi^\sharp(x)}(s) = 0$.  It follows
that $\nabla'_{\sqrt{-1}w}(t) = \sqrt{-1} \nabla'_{w}(t)$.  This
proves the above claim.

Thus, the canonical connection $\nabla'$ on $(\zms{F},\zms{k})$ is the
descent of $\nabla$ through $\zm{p} : \zm{X} \to \zms{Y}$.  Therefore,
\begin{equation*}
  \label{eq:385}
  \zm{p}^*c_1(\zms{F},\zms{k}) =
  \zm{i}^* c_1(E,h),
\end{equation*}
where $\zm{i} : \zm{X} \to X$ is the inclusion.  But, by
Lemma~\ref{lem:20}, $c_1(E,h) = -\frac{\lambda}{2\pi} \Omega$, hence
\begin{equation*}
  \label{eq:386}
  \zm{p}^*c_1(\zms{F},\zms{k}) =
  -\frac{\lambda}{2\pi} \zm{i}^* c_1(E,h) =
  -\frac{\lambda}{2\pi} \zm{p}^* \zms{\Theta}.
\end{equation*}
As $\zm{p}$ is a smooth submersion, it follows that
$c_1(\zms{F},\zms{k}) = -\frac{\lambda}{2\pi} \zms{\Theta}$.  \qed

\subsection{The line bundle on the moduli space}
\label{sec:line-bundle-moduli-1}

We will follow the notation of
Section~\ref{sec:kahler-metric-moduli-1}.  Recall that $\theta$ is a
rational weight of $Q$.  Let $n$ be an integer $>0$, such that
$n(\theta_a - \mu_\theta(d)) \in \Z$ for all $a\in Q_0$.  Let
$\lambda = -n$.  Let $\chi : G \to \units{\C}$ be the character
\begin{equation*}
  \label{eq:387}
  \chi(g) = \prod_{a\in Q_0} \det(g_a)^{n(\mu_\theta(d)-\theta_a)}.
\end{equation*}
Then, $\chi(K) \subset \UU{1}$, and $H \subset \Ker{\chi}$, since
$\sum_{a\in Q_0} (\mu_\theta(d)-\theta_a)d_a = 0$.  Let
$\alpha = -\frac{\sqrt{-1}}{\lambda} \tang[e]{\chi}$.  Then,
\begin{equation*}
  \label{eq:388}
  \alpha(\xi) = \frac{\sqrt{-1}}{n} \tang[e]{\chi}(\xi) =
  \frac{\sqrt{-1}}{n}
  \sum_{a\in Q_0} n(\mu_\theta(d)-\theta_a) \tr{\xi_a} =
  \pair{\xi}{\eta},
\end{equation*}
where
$\eta = \bigl(\sqrt{-1}(\theta_a - \mu_\theta(d))\id{V_a}\bigr)_{a\in
  Q_0}$.  Thus,
\begin{equation*}
  \label{eq:389}
  \Phi_\alpha(\rho)(\xi) =
  \frac{1}{2} \Omega(\xi^\sharp(\rho),\rho) + \alpha(\xi) =
  \frac{1}{2} \Omega(\xi^\sharp(\rho),\rho) + \pair{\xi}{\eta} =
  \Phi_\theta(\rho)(\xi)
\end{equation*}
for all $\rho\in \mathcal{A}$ and $\xi\in \Lie{K}$.

Let $E$ be the trivial line bundle on $\mathcal{A}$ with the action of
$G$ defined by $\chi$ as above.  Let $\stab{F}$ be its quotient by $G$
on $\stab{M}$.  As above, $\stab{F}$ is a holomorphic line bundle on
$\stab{M}$.  Now, the following result is an immediate consequence of
Proposition~\ref{pro:30}.

\begin{theorem}
  \label{thm:3}
  Let $n$ be any positive integer, such that
  $n(\theta_a-\mu_\theta(d))\in \Z$ for all $a\in Q_0$.  There exists
  a unique smooth Hermitian metric $\stab{k}$ on the holomorphic line
  bundle $\stab{F}$ on $\stab{M}$, such that
  $c_1(\stab{F},\stab{k}) = \frac{n}{2\pi} \stab{\Theta}$, where
  $\stab{\Theta}$ is the K\"ahler form on $\stab{M}$.
\end{theorem}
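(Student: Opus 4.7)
The plan is to apply Proposition~\ref{pro:30} directly, taking $V=\mathcal{A}$ and $X=\mathcal{B}$, together with the groups $G$, $H$, $\bar{G}$, $K$, $\bar{K}$ of Section~\ref{sec:kahler-metric-moduli-1}, and with the character $\chi$, the negative real number $\lambda=-n$, and the resulting element $\alpha\in \Lie{K}^*$ introduced in the discussion just preceding the theorem. The conclusion of Proposition~\ref{pro:30} in this situation is a unique smooth Hermitian metric $\stab{k}$ on $\stab{F}$ whose first Chern form equals $-\frac{\lambda}{2\pi}\stab{\Theta}=\frac{n}{2\pi}\stab{\Theta}$, which is exactly what the theorem asserts.

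The computations preceding the theorem already dispose of the two arithmetic hypotheses of Proposition~\ref{pro:30}: that $\chi(K)\subset \UU{1}$, because each exponent $n(\mu_\theta(d)-\theta_a)$ is an integer by the choice of $n$, and that $H\subset \Ker{\chi}$, because $\sum_{a\in Q_0} d_a(\mu_\theta(d)-\theta_a)=0$. They also show, via the explicit formula for $\alpha$ and Lemma~\ref{lem:17}, that the moment map $\Phi_\alpha$ produced by the character $\chi$ and the scalar $\lambda$ coincides on all of $\mathcal{A}$ with $\Phi_\theta$. In particular, the subset $\zm{X}$ appearing in Proposition~\ref{pro:30} is precisely $\eh{\mathcal{B}}$.

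The remaining hypotheses of Proposition~\ref{pro:30} I would read off from earlier results. The condition $G_\rho=H$ for every $\rho\in\mathcal{B}$ is Proposition~\ref{pro:2}(\ref{item:25}) applied to Schur representations; the principality of the induced $\bar{G}$-action on $\mathcal{B}$ is Theorem~\ref{thm:1}; the containment $\Phi_\alpha(\mathcal{B})=\Phi_\theta(\mathcal{B})\subset \Ann{\Lie{H\cap K}}$ is Lemma~\ref{lem:18}; and the containment $P_G(\eh{\mathcal{B}},\eh{\mathcal{B}})\subset HK$ is Lemma~\ref{lem:16}. With all four conditions verified, Proposition~\ref{pro:30} applies verbatim and produces the desired metric $\stab{k}$.

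There is no substantive obstacle here, since Proposition~\ref{pro:30} carries the analytic content and uniqueness is part of its conclusion. The only point that requires any real care is the bookkeeping of signs: one must check that the choice $\lambda=-n$ together with the definition $\alpha=-\frac{\sqrt{-1}}{\lambda}\tang[e]{\chi}$ is rigged so that $\Phi_\alpha=\Phi_\theta$ (rather than its negative) and so that the factor $-\lambda/(2\pi)$ delivered by Proposition~\ref{pro:30} becomes the positive multiple $n/(2\pi)$. This is precisely what the identity $\Phi_\alpha(\rho)(\xi)=\Phi_\theta(\rho)(\xi)$ displayed just before the theorem verifies, so nothing further is needed.
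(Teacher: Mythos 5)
Your proposal is correct and follows exactly the route the paper takes: the paper likewise treats the theorem as an immediate consequence of Proposition~\ref{pro:30} applied with $X=\mathcal{B}$, $\lambda=-n$, and the character $\chi$, with the hypotheses supplied by Proposition~\ref{pro:2}(\ref{item:25}), Theorem~\ref{thm:1}, Lemma~\ref{lem:16}, Lemma~\ref{lem:17}, and Lemma~\ref{lem:18}, and the identity $\Phi_\alpha=\Phi_\theta$ verified in the displayed computation just before the statement. Nothing is missing.
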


\end{document}